\def\R{\mathbb{R}}
\def\Z{\mathbb{Z}}
\def\N{\mathbb{N}}
\def\calB{\mathcal{B}}
\def\calF{\mathcal{F}}
\def\calH{\mathcal{H}}
\def\calM{\mathcal{M}}
\def\calP{\mathcal{P}}
\def\calS{\mathcal{S}}
\def\calU{\mathcal{U}}
\def\calZ{\mathcal{Z}}
\def\e{\varepsilon}
\def\ttau{\tilde{\tau}}
\def\tc{\tilde{c}}
\def\bv{\overline{v}}
\def\tlambda{\tilde{\lambda}}
\def\tlambda{\tilde{\lambda}}
\theoremstyle{plain}
	\newtheorem{theorem}{Theorem}[section]
	\newtheorem{lemma}[theorem]{Lemma}
	\newtheorem{corollary}[theorem]{Corollary}
	\newtheorem{definition}[theorem]{Definition}
	\newtheorem{remark}[theorem]{Remark}
	\newtheorem{example}[theorem]{Example}
\theoremstyle{plain}
\begin{document}
\title[Infinite multiplicity]{
Infinite multiplicity of positive solutions of\\
 an inhomogeneous supercritical elliptic equation on $\R^N$
}

\author{Sho Katayama*}
\thanks{*Corresponding author}
\thanks{SK was supported by Grant-in-Aid for JSPS Fellows Grant Number 23KJ0645, and FoPM, WINGS Program, the University of Tokyo.}
\address{Graduate School of Mathematical Sciences, The University of Tokyo,\\
 3-8-1 Komaba, Meguro-ku, Tokyo 153-8914, Japan\\
{\rm{\texttt{katayama-sho572@g.ecc.u-tokyo.ac.jp}}}
}

\author{Yasuhito Miyamoto}
\thanks{ORCiD of YM is 0000-0002-7766-1849}
\thanks{YM was supported by JSPS KAKENHI Grant Number 24K00530.}
\address{Graduate School of Mathematical Sciences, The University of Tokyo,\\
 3-8-1 Komaba, Meguro-ku, Tokyo 153-8914, Japan\\
{\rm{\texttt{miyamoto@ms.u-tokyo.ac.jp}}}
}

\begin{abstract}
We are concerned with positive radial solutions of the inhomogeneous elliptic equation $\Delta u+K(|x|)u^p+\mu f(|x|)=0$ on $\R^N$, where $N\ge 3$, $\mu>0$ and $K$ and $f$ are nonnegative nontrivial functions.
If $K(r)\sim r^{\alpha}$, $\alpha>-2$, near $r=0$, $K(r)\sim r^{\beta}$, $\beta>-2$, near $r=\infty$ and certain assumptions on $f$ are imposed, then the problem has a unique positive radial singular solution for a certain range of $\mu$.
We show that existence of a positive radial singular solution is equivalent to existence of infinitely many positive bounded solutions which are not uniformly bounded, if $p$ is between the critical Sobolev exponent $p_S(\alpha)$ and Joseph-Lundgren exponent $p_{JL}(\alpha)$.
Using these theorems, we establish existence of infinitely many positive bounded solutions which are not uniformly bounded, for $p_S(\alpha)<p<p_{JL}(\alpha)$ if $K(r)=r^{-\alpha}$, $\alpha>-2$.
\end{abstract}

\date{\today}
\subjclass[2020]{Primary: 35J60, 35B33, secondary 34D05, 35B05.}
\keywords{Singular radial solution; fast/slow-decay solutions; Joseph-Lundgren exponent; Intersection number}
\maketitle


\section{Introduction and main theorems}
In this paper we consider the inhomogeneous elliptic equation on $\R^N$
\begin{equation}\label{PDE}
\begin{cases}
\Delta u+K(x)u^p+\mu f(x)=0& \textrm{in}\ \ \R^N,\\
u>0 & \textrm{in}\ \ \R^N,
\end{cases}
\end{equation}
where $p>1$, $\mu\ge 0$, $N\ge 3$, $K(x)$ is a positive function and $f(x)$ is a nonnegative nontrivial function.
When $\mu=0$, \eqref{PDE} arises in conformal Riemannian geometry and astrophysics. 
See {\it e.g.}, \cite{C57,LN88}.
On the other hand, when $\mu>0$ and $K\equiv 1$, \eqref{PDE} arises in probability theory, {\it e.g.} the super-Brownian motion \cite{B96,L93}.
The problem \eqref{PDE} is their generalization.
Various sufficient conditions on $K$, $f$ and $p$ for the existence and multiplicities have been studied for three decades.

Throughout the present paper, we assume that $K(x)$ and $f(x)$ are radially symmetric and study radial solutions of \eqref{PDE}.
Then, \eqref{PDE} can be reduced to the ODE
\begin{equation}\label{ODE}
\begin{cases}
u''+\frac{N-1}{r}u'+K(r)u^p+\mu f(r)=0 &\text{for}\ r>0,\\
u>0 & \text{for}\ r>0.
\end{cases}
\end{equation}
We assume that $K(r)$ is a positive continuous function such that the following \eqref{K0} and \eqref{Kinfty} hold:
\begin{equation}\label{K0}
K(r)=(k_0+o(1))r^\alpha\ \textrm{as}\ r\to 0\ \textrm{for some $\alpha>-2$ and $k_0>0$},
\end{equation}
\begin{equation}\label{Kinfty}
K(r)=(k_{\infty}+o(1))r^\beta\ \textrm{as}\ r\to\infty\ \textrm{for some $\beta>-2$ and $k_{\infty}>0$}.
\end{equation}
A typical example is $K(r)=r^{\alpha}$, $\alpha>-2$.
$f(r)$ is a nonnegative nontrivial continuous function such that
\begin{equation}\label{f02}
f(r)=O(r^{\nu})\ \text{as}\ r\to 0\ \ \text{for some}\ \nu>-2,
\end{equation}
\begin{equation}\label{finfty2}
f(r)=O(r^{-q}) \ \text{as}\ \ r\to\infty\ \ \text{for some}\ q>N.
\end{equation}

There are two important exponents in the study of \eqref{ODE}:
\begin{align*}
p_S(\alpha)&:=\frac{N+2+2\alpha}{N-2},\\
p_{JL}(\alpha)&:=
\begin{cases}
1+\frac{2(2+\alpha)}{N-4-\alpha-\sqrt{(2+\alpha)(2N-2+\alpha)}} & \text{for}\ N>10+4\alpha,\\
\infty & \text{for}\ N\le 10+4\alpha.
\end{cases}
\end{align*}
$p_S(\alpha)$ is the critical Sobolev exponent and $p_{JL}(\alpha)$ is the so-called Joseph-Lundgren exponent.
\begin{definition}\label{S1D1}
We call $u(r)\in C^2(0,\infty)$ a bounded solution of \eqref{ODE} if $u$ satisfies \eqref{ODE} pointwise and there exists a limit
$$
\lim_{r\to 0}u(r)<\infty.
$$
We call $u^*(r)\in C^2(0,\infty)$ a singular solution of \eqref{ODE} if $u^*$ satisfies \eqref{ODE} pointwise and
$$
\lim_{r\to 0}u(r)=\infty.
$$
\end{definition}
We prove existence of infinitely many solutions of \eqref{ODE}.
In the previous studies, the comparison principle or a functional analytic method were used.
However, in this paper we use a different approach, where a singular solution of \eqref{ODE} plays a crucial role.

The first result is a sufficient condition for existence of a singular solution.
\begin{theorem}\label{S1T1}
Suppose that $N\ge 3$, \eqref{K0}, and \eqref{f02} hold.
Assume that $p>p_S(\alpha)$.
If the equation \eqref{ODE} has infinitely many bounded solutions $\{u_j\}_{j=0}^{\infty}$ such that $\left\|u_j\right\|_{L^{\infty}}\to\infty$ as $j\to\infty$, then \eqref{ODE} has a singular solution $u^*(r)$.
\end{theorem}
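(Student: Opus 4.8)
The plan is a compactness argument: pass to a limit of a subsequence of $\{u_j\}$ and show that it is a singular solution, the real work being the last point. Write $a_j:=\|u_j\|_{L^\infty}\to\infty$ and $m:=\frac{2+\alpha}{p-1}$. I would first record two elementary facts about any bounded solution $u$. Integrating $(r^{N-1}u')'=-r^{N-1}(Ku^p+\mu f)\le0$ and using $\lim_{r\to0}r^{N-1}u'(r)=0$ (otherwise integrating $u'$ near $0$ forces $u(0^+)=\pm\infty$, since $N\ge3$) gives $r^{N-1}u'(r)=-\int_0^r s^{N-1}(Ku^p+\mu f)\,ds\le0$, so $u$ is nonincreasing with $\|u\|_{L^\infty}=u(0^+)$. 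Next, fixing $\delta_0>0$ with $K(r)\ge\frac{k_0}{2}r^\alpha$ on $(0,\delta_0]$, monotonicity yields $-u'(r)\ge\frac{k_0}{2(N+\alpha)}u(r)^p r^{1+\alpha}$, i.e. $\frac{d}{dr}u(r)^{1-p}\ge c\,r^{1+\alpha}$; integrating over $(0,R)$ (and letting the lower endpoint tend to $0$) gives the uniform bound $u(R)\le C_2 R^{-m}$ for $0<R\le\delta_0$, with $C_2=C_2(N,\alpha,p,k_0)$. In particular $u_j\le C_2 r^{-m}$ on $(0,\delta_0]$ and $u_j\le C_2\delta_0^{-m}$ on $[\delta_0,\infty)$.

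These bounds make the compactness routine: since $p>p_S(\alpha)$ one has $N+\alpha-mp>0$, so $s^{N-1}K(s)s^{-mp}$ is integrable near $0$, whence (with \eqref{f02}) $\int_0^r s^{N-1}(Ku_j^p+\mu f)\,ds$ is uniformly bounded on compacts of $[0,\infty)$ and therefore $u_j,u_j',u_j''$ are uniformly bounded on compacts of $(0,\infty)$. By Arzelà–Ascoli and a diagonal argument a subsequence $u_{j_k}\to u^*$ in $C^2_{\mathrm{loc}}(0,\infty)$, with $u^*$ a nonincreasing, nonnegative $C^2(0,\infty)$ solution of \eqref{ODE}. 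Since $\Delta u^*=-Ku^{*p}-\mu f\le0$, $u^*$ is superharmonic and nonnegative on $\R^N\setminus\{0\}$, so the strong minimum principle forces $u^*>0$ there provided $u^*\not\equiv0$, which will follow once we know $u^*$ is singular.

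The crux is to show $u^*(r)\to\infty$ as $r\to0$. I would rescale, setting $v_j(s):=a_j^{-1}u_j(a_j^{-1/m}s)$; with $\lambda_j=a_j^{-1/m}$ (so $\lambda_j^{2+\alpha}a_j^{p-1}=1$) one computes
\[
v_j''+\tfrac{N-1}{s}v_j'+\big(K(\lambda_j s)(\lambda_j s)^{-\alpha}\big)s^\alpha v_j^{\,p}+\lambda_j^2 a_j^{-1}\mu f(\lambda_j s)=0,\qquad v_j(0)=1,
\]
with $v_j$ nonincreasing, and by \eqref{K0}, \eqref{f02} and $a_j\to\infty$ the coefficient tends to $k_0 s^\alpha$ and the last term to $0$, locally uniformly; so $v_{j_k}\to V$ in $C^2_{\mathrm{loc}}(0,\infty)$, where $V$ is the (unique) bounded solution of the model equation $V''+\frac{N-1}{s}V'+k_0 s^\alpha V^p=0$ with $V(0^+)=1$. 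Since $p>p_S(\alpha)$ it is known that $V$ is global, positive and $s^m V(s)\to L:=(m(N-2-m)/k_0)^{1/(p-1)}>0$ as $s\to\infty$. In Emden–Fowler variables $\Phi(t)=e^{mt}v(e^t)$ the model becomes the autonomous equation $\Phi''+(N-2-2m)\Phi'-m(N-2-m)\Phi+k_0\Phi^{\,p}=0$, whose equilibrium $\Phi\equiv L$ has linearization $\psi''+(N-2-2m)\psi'+(p-1)m(N-2-m)\psi=0$; its roots have negative real part exactly when the damping $N-2-2m>0$, i.e. exactly when $p>p_S(\alpha)$, so $\Phi\equiv L$ is (uniformly) asymptotically stable, and $V$ is the heteroclinic orbit from $\Phi=0$ to $\Phi=L$. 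Given $\varepsilon>0$, I would choose $t_0$ with $(\Phi(t_0),\dot\Phi(t_0))$ near $(L,0)$; for $r>0$ small the perturbations of this autonomous equation satisfied by $\Phi_{j_k}(t)=e^{mt}v_{j_k}(e^t)$ — one coming from $K(\rho)\rho^{-\alpha}-k_0$, one from the $\mu f$-term — stay uniformly small on $t\in[t_0,\log(a_{j_k}^{1/m}r)]$, so by total stability of the asymptotically stable equilibrium $(\Phi_{j_k},\dot\Phi_{j_k})$ remains within $\varepsilon$ of $(L,0)$ on that whole interval for $k$ large. Evaluating at the right endpoint gives $r^m u_{j_k}(r)=\Phi_{j_k}(\log(a_{j_k}^{1/m}r))\in(L-\varepsilon,L+\varepsilon)$; letting $k\to\infty$ and then $\varepsilon\to0$ yields $r^m u^*(r)\to L$ as $r\to0$. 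Hence $u^*(0^+)=\infty$, and $u^*$ is a singular solution.

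I expect the monotonicity, a priori bound and compactness steps to be routine, and the genuine difficulty to lie in the last step — specifically in upgrading the \emph{local} convergence $v_{j_k}\to V$ to control of $v_{j_k}(s)$ on the \emph{growing} range $s\le a_{j_k}^{1/m}r$, where local convergence gives no information: one really must invoke the asymptotic stability of the singular-solution equilibrium of the limiting (Emden–Fowler) equation — equivalently the hypothesis $p>p_S(\alpha)$ — together with a careful check that both the weight perturbation $K(\rho)\rho^{-\alpha}-k_0$ and the inhomogeneity remain uniformly small on that range, which is precisely what forces $r$ to be taken small. Alternatively, if the corresponding statement for the model equation is available in the literature, one may import directly that its regular solutions converge, as the initial value tends to $\infty$, to the singular solution $L r^{-m}$, and transfer this through the rescaling.
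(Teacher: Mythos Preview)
Your proposal is correct and follows essentially the same route as the paper: the uniform bound $u_j\le C r^{-m}$ is the paper's Lemma~\ref{S2L1}(i), your rescaling $v_j(s)=a_j^{-1}u_j(a_j^{-1/m}s)$ and comparison to the model solution $V$ correspond to Lemma~\ref{uapprox1} together with \eqref{ODE0rem}, and your ``total stability of the asymptotically stable equilibrium $L$'' step is exactly what the paper proves by hand in Lemmas~\ref{pertODE} and \ref{uapprox2}, with the whole package assembled in Theorem~\ref{convtosing}. The only cosmetic differences are that the paper first obtains $u^*$ on $(0,r^*]$ and then extends (using a Hopf-lemma contradiction for positivity), whereas you take the limit directly on $(0,\infty)$ and use the strong minimum principle; both are fine.
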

On the other hand, under the assumption $p_S(\alpha)<p<p_{JL}(\alpha)$, a sufficient condition for existence of infinitely many bounded solutions of \eqref{ODE} is given as follows:
\begin{theorem}\label{S1T2}
Suppose that $N\ge 3$, \eqref{K0}, and \eqref{f02} hold.
Assume that $p_S(\alpha)<p<p_{JL}(\alpha)$.
If \eqref{ODE} has a singular solution $u^*(r)$, then \eqref{ODE} has infinitely many bounded solutions $\{u_j\}_{j=0}^{\infty}$ such that $\left\|u_j\right\|_{\infty}\to\infty$ as $j\to\infty$.
\end{theorem}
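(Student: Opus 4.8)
The plan is to run a shooting argument in the initial height and to use $u^*$ as the reference for counting intersections. For $a>0$ let $u_a$ be the solution of \eqref{ODE} with $u_a(0)=a$ and $u_a'(0)=0$, defined on its maximal interval of positivity $(0,R_a)$, $R_a\in(0,\infty]$. Since $(r^{N-1}u_a')'=-r^{N-1}(Ku_a^p+\mu f)<0$, each $u_a$ is strictly decreasing, so $R_a<\infty$ forces $u_a(R_a)=0$ with $u_a'(R_a)<0$ and $u_a$ cannot be continued as a positive solution; hence a bounded solution of \eqref{ODE} is exactly a $u_a$ with $R_a=\infty$, and then $\|u_a\|_{L^\infty}=a$. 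So it suffices to produce $R_a=\infty$ for an unbounded set of $a$. I would also first record, by a dominant balance as $r\to0$ (the term $K(u^*)^p$ beats $\mu f$ there, using \eqref{K0}, \eqref{f02} and $p>p_S(\alpha)$), that $u^*(r)=(L_0+o(1))r^{-m}$ as $r\to0$, where $m:=\tfrac{2+\alpha}{p-1}$, $L_0:=\bigl(\tfrac{m(N-2-m)}{k_0}\bigr)^{1/(p-1)}$ and $N-2-m>0$.

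The first step is that $Z_{(0,\delta)}(u_a-u^*)$, the number of zeros of $u_a-u^*$ in $(0,\delta)$, tends to $\infty$ as $a\to\infty$ for each $\delta>0$. Rescale by $\tilde u_a(\rho):=a^{-1}u_a(a^{-1/m}\rho)$: the equation for $\tilde u_a$ has weight converging locally uniformly to $k_0\rho^\alpha$ and forcing term with prefactor $a^{-1-(2+\nu)/m}\to0$ (using \eqref{K0}, \eqref{f02}, $\nu>-2$), so $\tilde u_a\to v_1$ in $C^2_{\mathrm{loc}}([0,\infty))$, where $v_1$ is the regular solution of the limit equation $v''+\tfrac{N-1}{\rho}v'+k_0\rho^\alpha v^p=0$, $v(0)=1$; similarly $a^{-1}u^*(a^{-1/m}\cdot)\to v^*:=L_0\rho^{-m}$, the singular solution of the limit equation. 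After the Emden--Fowler change of variables the limit equation is autonomous with $v^*$ the equilibrium; the linearisation there has eigenvalues $\tfrac12\bigl(A\pm\sqrt{A^2-4(p-1)M}\bigr)$ with $A:=N-2-2m>0$ (equivalent to $p>p_S(\alpha)$) and $M:=m(N-2-m)$, and the discriminant is negative precisely for $p<p_{JL}(\alpha)$. Thus, for $p_S(\alpha)<p<p_{JL}(\alpha)$, $v^*$ is a stable focus, the regular trajectory spirals into it (the classical picture for the limit equation, which has no ball solution since $p>p_S(\alpha)$), and $v_1-v^*$ has infinitely many transversal zeros. Since transversal zeros persist under $C^1$-small perturbations, undoing the rescaling gives $Z_{(0,\delta)}(u_a-u^*)\to\infty$.

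The hard step is to pass from this oscillation to $R_a=\infty$ for arbitrarily large $a$. Write $\phi_a:=u_a/u^*$; a computation from the equations for $u_a$ and $u^*$ gives $\phi_a''+\bigl(2\tfrac{(u^*)'}{u^*}+\tfrac{N-1}{r}\bigr)\phi_a'+K(u^*)^{p-1}(\phi_a^p-\phi_a)+\tfrac{\mu f}{u^*}(1-\phi_a)=0$, which by the asymptotics of $u^*$ becomes, in $t=-\log r$ with dots denoting $d/dt$, $\ddot\phi_a-A\dot\phi_a+M(\phi_a^p-\phi_a)+o(1)=0$ as $t\to+\infty$, the $o(1)$ being independent of $a$. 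As $r\to0$ one has $\phi_a\to0$ (because $u_a(0^+)<\infty=u^*(0^+)$), so $\phi_a$ lies on the stable manifold of the hyperbolic saddle $\phi=0$ with $\phi_a\sim(a/L_0)r^m$; running the orbit as $r$ increases, it leaves $0$, reaches a fixed-size neighbourhood of the equilibrium $\phi=1$ near $r\sim(L_0/a)^{1/m}$, and --- since for $p_S(\alpha)<p<p_{JL}(\alpha)$ this equilibrium is a focus that is \emph{stable} as $r$ increases (eigenvalues $\tfrac12(A\pm i\sqrt{4(p-1)M-A^2})$, real part $A/2>0$) --- afterwards spirals into $\phi=1$ with geometrically decaying amplitude while staying positive. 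Hence $R_a>\delta_0$ for a fixed small $\delta_0>0$ and all large $a$, and $\phi_a(\delta_0)\to1$, $\phi_a'(\delta_0)\to0$ with the approach \emph{through a spiral}: as $a$ varies, $(\phi_a(\delta_0)-1,\phi_a'(\delta_0))$ winds infinitely often about $0$, so the Cauchy datum $P_a:=(u_a(\delta_0),u_a'(\delta_0))$ spirals into $P^*:=(u^*(\delta_0),(u^*)'(\delta_0))$. Now view \eqref{ODE} as an IVP from $\delta_0$: the set $\mathcal N$ of Cauchy data at $\delta_0$ whose forward solution hits $0$ at a finite radius is open (such a solution vanishes transversally), and $P^*\notin\mathcal N$ since $u^*>0$ on $(0,\infty)$. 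Using \eqref{Kinfty} and the integrable decay \eqref{finfty2} ($q>N$), the fate as $r\to\infty$ of solutions with data near $P^*$ is governed by an asymptotically autonomous equation, from which near $P^*$ the set $\mathcal N$ is either absent (if $u^*$ decays slowly, so $P^*$ is interior to $\mathcal N^c$) or lies on one side of the codimension-one manifold of rapidly decaying data through $P^*$; in either case a spiral converging to $P^*$ must leave $\mathcal N$ infinitely often. Therefore $P_a\notin\mathcal N$, i.e. $R_a=\infty$, for an unbounded set of $a$.

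Finally, picking any $a_j\to\infty$ among those values gives pairwise distinct bounded solutions $u_{a_j}$ of \eqref{ODE} (distinct values at the origin) with $\|u_{a_j}\|_{L^\infty}=a_j\to\infty$, as required. The main obstacle is the third step --- turning the local oscillation near $r=0$ into genuine global positivity: the local convergence $u_a\to u^*$ on compacts of $(0,\infty)$ alone only yields $R_a\to\infty$, and one must use the structure \eqref{Kinfty}, \eqref{finfty2} at infinity together with the focus geometry to obtain $R_a=\infty$.
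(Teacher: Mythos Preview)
Your first two steps are essentially the paper's Lemma~\ref{u0asypro} and Theorem~\ref{inftyintersect}, and they are fine. The real problem is your third step: you explicitly invoke \eqref{Kinfty} and \eqref{finfty2} to analyse the structure of the ``bad'' set $\mathcal N$ near $P^*$, but Theorem~\ref{S1T2} assumes only \eqref{K0} and \eqref{f02}. Nothing is assumed about $K$ or $f$ at infinity, so your asymptotically autonomous picture as $r\to\infty$ is unavailable, and with it the claim that $\mathcal N$ is absent or one-sided near $P^*$. Even granting the extra hypotheses, the assertion that the fast-decay locus separates $\mathcal N$ from its complement in a full neighbourhood of $P^*$ would itself require work; but the decisive point is that you are using structure the theorem does not provide.

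The paper avoids this entirely by an intersection-number argument that needs no information at infinity. One argues by contradiction: if every bounded solution has $\|u\|_\infty<\zeta_0$, then $r_0(\zeta)<\infty$ for all $\zeta\ge\zeta_0+1$. The count $\calZ_{(0,r_0(\zeta))}[u^*-u(\cdot,\zeta)]$ is then finite, each zero is simple, and crucially the signs at both endpoints are fixed: $u^*-u(\cdot,\zeta)\to+\infty$ as $r\to0$, and $u^*(r_0(\zeta))-u(r_0(\zeta),\zeta)=u^*(r_0(\zeta))>0$ because $u^*$, being a singular solution of \eqref{ODE}, is positive on all of $(0,\infty)$. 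Hence no zero can enter or leave through the boundary, so this count is locally constant in $\zeta$ as long as $r_0(\zeta)<\infty$. But by your first step (Theorem~\ref{inftyintersect}) the count tends to infinity as $\zeta\to\infty$; it therefore must change somewhere, and at such a $\zeta$ one necessarily has $r_0(\zeta)=\infty$, contradicting the assumption. This replaces your analysis of the Cauchy-data map near $P^*$ by a purely topological observation, and is exactly why no hypothesis at $r=\infty$ is needed.
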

The following corollary immediately follows from Theorems~\ref{S1T1} and \ref{S1T2}:
\begin{corollary}\label{S1C1}
Suppose that $N\ge 3$, \eqref{K0}, and \eqref{f02} hold.
Assume that $p_S(\alpha)<p<p_{JL}(\alpha)$.
Then, the following (i) and (ii) are equivalent:\\
(i) \eqref{ODE} has a singular solution $u^*(r)$.\\
(ii) A set of bounded solutions of \eqref{ODE} is not uniformly bounded.
In particular, \eqref{ODE} has infinitely many bounded solutions.
\end{corollary}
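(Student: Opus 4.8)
The plan is to combine Theorems~\ref{S1T1} and \ref{S1T2} after unwinding the phrase ``not uniformly bounded.'' Note first that the standing hypothesis $p_S(\alpha)<p<p_{JL}(\alpha)$ implies in particular $p>p_S(\alpha)$, so the hypotheses of both theorems are met under the assumptions of the corollary.

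For (i)$\Rightarrow$(ii): assuming \eqref{ODE} has a singular solution $u^*$, Theorem~\ref{S1T2} directly produces infinitely many bounded solutions $\{u_j\}_{j=0}^\infty$ with $\left\|u_j\right\|_{L^\infty}\to\infty$; since $\sup_j\left\|u_j\right\|_{L^\infty}=\infty$, the set of all bounded solutions of \eqref{ODE} is a fortiori not uniformly bounded, and it contains infinitely many elements, which is exactly (ii).

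For (ii)$\Rightarrow$(i): if the set of bounded solutions of \eqref{ODE} is not uniformly bounded, then by definition the supremum of $\left\|u\right\|_{L^\infty}$ over all bounded solutions $u$ is $\infty$. Hence for every $j\in\N$ we may choose a bounded solution $v_j$ with $\left\|v_j\right\|_{L^\infty}\ge j$; passing to a subsequence and relabeling, we obtain bounded solutions $\{u_j\}_{j=0}^\infty$ with $\left\|u_j\right\|_{L^\infty}\to\infty$ (these are automatically pairwise distinct along a further subsequence, so ``infinitely many'' is literally satisfied). Then Theorem~\ref{S1T1} applies and yields a singular solution $u^*$ of \eqref{ODE}, which is (i).

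The only step requiring any care is the elementary bookkeeping in (ii)$\Rightarrow$(i), namely extracting from ``the supremum of the sup-norms is infinite'' an honest sequence of distinct bounded solutions whose sup-norms diverge, so that the hypothesis of Theorem~\ref{S1T1} is met verbatim; this is immediate. Consequently there is no substantive obstacle, and the corollary is a direct consequence of the two theorems.
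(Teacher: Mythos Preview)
Your proof is correct and matches the paper's approach: the paper simply states that the corollary ``immediately follows from Theorems~\ref{S1T1} and \ref{S1T2}'' without giving further details, and your argument supplies exactly the elementary bookkeeping needed to pass between ``not uniformly bounded'' and the sequential hypothesis of Theorem~\ref{S1T1}.
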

Because of Corollary~\ref{S1C1}, we are interested in existence of singular solutions of \eqref{ODE}.
We assume \eqref{Kinfty} and \eqref{finfty2} in addition to \eqref{K0} and \eqref{f02}.
We need some notation.
Let $u$ be a solution of \eqref{ODE}.
Then, $u$ is said a fast-decay solution if
$$
\limsup_{r\to\infty}r^{N-2}u(r)<\infty,
$$
and $u$ is said a slow-decay solution if
$$
\limsup_{r\to\infty}r^{N-2}u(r)=\infty,
$$
The main result of the paper is the following:
\begin{theorem}\label{S1T3}
Suppose that $N\ge 3$ and \eqref{K0}--\eqref{finfty2} hold.
Assume that $p>\max\{p_S(\alpha),p_S(\beta)\}$.
Then one of the following holds:
\begin{enumerate}
\item \eqref{ODE} has no singular solutions for all $\mu\ge 0$.
\item There are finitely many numbers $\{\mu_1,\mu_2,\ldots,\mu_{N_f}\}$, $N_f\ge 1$, such that $0\le\mu_1<\ldots<\mu_{N_f}<\infty$ and the following hold:
\begin{enumerate}
  \item If $\mu\in\{\mu_1,\ldots,\mu_{N_f}\}$, then \eqref{ODE} has a singular fast-decay solution.
  \item For each $i\in\{1,\ldots,N_f-1\}$, one of the following holds.
  \begin{enumerate}
    \item For all $\mu\in(\mu_i,\mu_{i+1})$, \eqref{ODE} has a singular slow-decay solution.
    \item For all $\mu\in(\mu_i,\mu_{i+1})$, \eqref{ODE} has no singular solutions.
  \end{enumerate}
  Furthermore, the same dichotomy holds for the interval $[0,\mu_1)$ if $\mu_1>0$.
  \item If $\mu>\mu_{N_f}$, then \eqref{ODE} has no singular solutions.
\end{enumerate}
\end{enumerate}
\end{theorem}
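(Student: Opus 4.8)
The plan is to follow, as the parameter $\mu$ increases, the \emph{branch of singular solutions} $\mu\mapsto u^*_\mu$. By the analysis near $r=0$ carried out earlier, whenever \eqref{ODE} admits a singular solution it is unique, satisfies $u^*_\mu(r)=(L+o(1))r^{-m}$ as $r\to 0$ with $m=(2+\alpha)/(p-1)\in(0,(N-2)/2)$ (using $p>p_S(\alpha)$) and $L^{p-1}=m(N-2-m)/k_0$, is strictly decreasing, and tends to $0$ as $r\to\infty$ (a positive limit is impossible since $\beta>-2$ makes $r^{N-1}Ku^p$ non-integrable at infinity). Put
\[
S=\{\mu\ge0:\ \eqref{ODE}\ \text{has a singular solution}\},\qquad S=S_f\sqcup S_s,
\]
the splitting according to whether $u^*_\mu$ is a fast- or a slow-decay solution; since $p>p_S(\beta)$, slow decay means $u^*_\mu(r)\sim L_\infty r^{-m_\infty}$ with $m_\infty=(2+\beta)/(p-1)<N-2$, whereas fast decay means $A(\mu):=\lim_{r\to\infty}r^{N-2}u^*_\mu(r)\in(0,\infty)$, and integrating \eqref{ODE} against $r^{N-1}$ (using $r^{N-1}(u^*_\mu)'\to0$ as $r\to0$) gives, for $\mu\in S_f$,
\[
(N-2)\,A(\mu)=\int_0^\infty r^{N-1}\bigl(K(r)u^*_\mu(r)^p+\mu f(r)\bigr)\,dr .
\]
In this language, Theorem~\ref{S1T3} is the conjunction of: (i) $S$ is closed and bounded; (ii) $S_f$ is finite; (iii) each connected component of $[0,\max S]\setminus S_f$ (when $S\ne\emptyset$) lies either in $S_s$ or in the complement of $S$; and (iv) $S\ne\emptyset$ forces $\max S\in S_f$.

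\emph{Boundedness and closedness of $S$.} If \eqref{ODE} has any positive solution $u$ (singular or bounded), then $-\Delta u\ge\mu f$ on $\R^N$, so $u\ge\mu\Phi$, where $\Phi>0$ solves $-\Delta\Phi=f$ and decays like $r^{-(N-2)}$ at infinity (this uses \eqref{f02} and \eqref{finfty2}); feeding $u\ge c\Phi$ back into $-\Delta u\ge Ku^p$ and using that the Newtonian potential of $K\Phi^p$ dominates a fixed multiple of $\Phi$ (which again follows from \eqref{K0}--\eqref{finfty2} and $p>p_S(\beta)$), one gets $u\ge(\mu+\lambda c^p)\Phi$; iterating, the constants obey $c_{n+1}=\mu+\lambda c_n^p$, which diverges unless $\mu$ lies below the explicit threshold at which $c\mapsto\mu+\lambda c^p$ becomes tangent to the diagonal. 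Hence $S\subset[0,\mu^{**}]$ for some $\mu^{**}<\infty$. For closedness: if $\mu_n\to\mu$ with $\mu_n\in S$, the uniform germ construction gives $(L/2)r^{-m}\le u^*_{\mu_n}(r)\le 2Lr^{-m}$ on a $\mu$-independent interval $(0,\delta]$, and monotonicity of $u^*_{\mu_n}$ then bounds it uniformly on every compact subset of $(0,\infty)$; interior estimates produce a $C^2_{\rm loc}$-convergent subsequence whose limit solves the limiting equation and still satisfies $(L/2)r^{-m}\le\cdot\le 2Lr^{-m}$ near $0$, hence is \emph{singular}; by uniqueness it is $u^*_\mu$, so $\mu\in S$, the whole branch converges, and $A$ is continuous on $S_f$ (so $S_f$ is closed in $S$ and ``slow decay'' is an open condition on $S$).

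\emph{Discreteness of $S_f$ and the dichotomy.} The key point is that a fast-decay singular solution $u^*_{\mu_0}$ is \emph{non-degenerate}: in the weighted function space in which \eqref{ODE} becomes a smooth Fredholm problem of index $0$ with the $Lr^{-m}$-singularity at $0$ and the $r^{-(N-2)}$-decay at $\infty$ both imposed as constraints, the linearisation at $u^*_{\mu_0}$ has trivial kernel — a kernel element would be a solution of the linearised equation decaying faster than $r^{-(N-2)}$ at $\infty$ and faster than $r^{-m}$ at $0$, hence $\equiv0$ by a Sturm/Rellich-type identity (here we do \emph{not} assume $p<p_{JL}(\alpha)$, so $u^*_{\mu_0}$ need not spiral and the identity is available). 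The implicit function theorem then yields, for $\mu$ near $\mu_0$, a unique fast-decay singular solution, namely $u^*_{\mu_0}$ at $\mu=\mu_0$; thus $\mu_0$ is isolated in $S_f$, and together with boundedness $S_f=\{\mu_1<\cdots<\mu_{N_f}\}$ is finite, giving item~(a). If $S=\emptyset$ we are in alternative~(1). Otherwise $\bar\mu:=\max S$ exists, and $u^*_{\bar\mu}$ cannot be slow-decay — slow decay being open would put $(\bar\mu,\bar\mu+\e)\subset S$, contradicting maximality — so $\bar\mu\in S_f$, i.e.\ $\mu_{N_f}=\bar\mu$ and item~(c) holds with $N_f\ge1$. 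Finally fix a component $I$ of $[0,\bar\mu]\setminus S_f$ (an interval $(\mu_i,\mu_{i+1})$ or $[0,\mu_1)$): on $I$ there is no fast decay, so $I=(I\cap S_s)\sqcup(I\setminus S)$; the first set is open in $I$ (slow decay is open and, by the same compactness/continuity, comes with existence near each of its points), the second is open in $I$ because $S$ is closed, and connectedness of $I$ forces one of them to be empty — exactly the alternative in item~(b). Running over all components completes the proof.

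\emph{Main obstacle.} The genuinely hard step is the discreteness of $S_f$: proving that ``$\mu$ admits a fast-decay singular solution'' is a transversal, codimension-one condition without the Joseph--Lundgren assumption. This requires a careful choice of weighted spaces so that both the singularity at the origin and the fast decay at infinity are read as Fredholm conditions, together with a non-oscillation (Sturm-type) argument to kill the kernel. A close second is the closedness of $S$, which underlies the whole scheme: one must exclude that a sequence of singular solutions degenerates to a \emph{bounded} solution, and it is precisely the $\mu$-independent leading profile $Lr^{-m}$ near $r=0$ that prevents this.
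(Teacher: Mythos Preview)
Your overall decomposition—boundedness of $S$, openness of $S_s$ and of $[0,\infty)\setminus S$, finiteness of $S_f$—matches the paper's, and the first three pieces are handled essentially as the paper does (the paper cites \cite{IK24} for the bound on $\mu$ and proves openness of $\calS$ and $\calB$ directly via continuous dependence together with Lemma~\ref{SDasymp}). The genuine gap is in your argument for the discreteness of $S_f$.

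First, the logic is inverted. If the linearisation of the fast-decay singular problem at $u^*_{\mu_0}$ is Fredholm of index~$0$ with trivial kernel, it is an isomorphism, and the implicit function theorem then produces, for every $\mu$ near $\mu_0$, a solution in the same weighted space—i.e.\ a fast-decay singular solution. By uniqueness of singular solutions this is $u^*_\mu$, so an entire neighbourhood of $\mu_0$ would lie in $S_f$; your hypothesis would make $S_f$ \emph{open}, not discrete. Isolation would instead require a codimension-one transversality statement (the one-parameter singular branch crossing the two-parameter fast-decay manifold transversally at some fixed radius), which is a different and non-obvious claim that you have not formulated.

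Second, the trivial-kernel assertion itself is unproved and not available in the full range. You invoke a ``Sturm/Rellich-type identity'' predicated on the singular profile not spiralling, but the indicial roots of the linearised equation at $r=0$ are complex precisely when $p_S(\alpha)<p<p_{JL}(\alpha)$, so solutions of the linearisation \emph{do} oscillate there (this is the mechanism behind Theorem~\ref{S1T2}); no non-oscillation argument can kill the kernel in that range, and for merely continuous $K,f$ no Pohozaev-type identity is at hand either.

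The paper bypasses both difficulties by using \emph{real analyticity} rather than non-degeneracy. Theorem~\ref{singexists} shows that $\mu\mapsto(u^*_\mu(R_1),u^{*\prime}_\mu(R_1))$ is real analytic on each component of the positivity set $\calP_{R_1}$; Lemma~\ref{SFsolconstraint} parametrises the fast-decay solutions from infinity analytically in $(\eta,\mu)$ via a contraction in a weighted space and reduces ``$\mu\in\calF$'' to the vanishing of a single real-analytic function $H(\mu)=u^{*\prime}_\mu(R_1)-\Xi(u^*_\mu(R_1),\mu)$. The identity theorem then forces the zero set to be discrete on each component unless $H\equiv 0$ there, and the latter is excluded by the uniform bounds of Lemmas~\ref{mubound} and~\ref{SFsolbound} (Lemma~\ref{SFfinite}). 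This gives finiteness of $\calF$ with no transversality or kernel hypothesis at all.
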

\begin{example}
(i) Let $N_f=1$ and $\mu_1=0$.
Then \eqref{ODE} has a fast-decay singular solution for $\mu=0$ and \eqref{ODE} has no singular solution for $\mu>0$.\\
(ii) Let $N_f=1$ and $\mu_1>0$.
Then for $0\le\mu<\mu_1$, \eqref{ODE} has a slow-decay singular solution or no singular solution.
For $\mu=\mu_1$, \eqref{ODE} has a fast-decay singular solution.
For $\mu>\mu_1$, \eqref{ODE} has no singular solution. 
\end{example}

In the case $K(r)\equiv r^\alpha$, $\alpha>-2$, the problem \eqref{ODE} with $\mu=0$ has a singular slow-decay solution
\[
u^*_0(r)=\left\{\frac{2+\alpha}{p-1}\left(N-2-\frac{2+\alpha}{p-1}\right)\right\}^{\frac{1}{p-1}}r^{-\frac{2+\alpha}{p-1}}.
\]
This together with Theorem~\ref{S1T3} derives the following corollary:
\begin{corollary}\label{S1T3'}
Let $K(r)\equiv r^{\alpha}$ and $p>p_S(\alpha)$. Assume \eqref{f02} and \eqref{finfty2}.
There exists $\mu_1>0$ such that for $\mu\in[0,\mu_1)$, \eqref{ODE} has a singular slow-decay solution.
\end{corollary}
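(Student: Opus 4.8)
The plan is to read the corollary off Theorem~\ref{S1T3} after producing a concrete singular slow-decay solution of \eqref{ODE} at $\mu=0$.

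First I would check that the hypotheses of Theorem~\ref{S1T3} are met for $K(r)\equiv r^\alpha$: taking $k_0=k_\infty=1$ and $\beta=\alpha$, conditions \eqref{K0} and \eqref{Kinfty} hold, the growth conditions \eqref{f02} and \eqref{finfty2} on $f$ are assumed, and the requirement $p>\max\{p_S(\alpha),p_S(\beta)\}$ collapses to the assumed $p>p_S(\alpha)$. Next I would verify that the explicit function $u^*_0(r)=\left\{\frac{2+\alpha}{p-1}\left(N-2-\frac{2+\alpha}{p-1}\right)\right\}^{1/(p-1)}r^{-(2+\alpha)/(p-1)}$ is a singular slow-decay solution of \eqref{ODE} with $\mu=0$: writing $u^*_0=Cr^{-m}$, matching powers of $r$ in $u''+\frac{N-1}{r}u'+r^\alpha u^p=0$ forces $m=\frac{2+\alpha}{p-1}$, and matching constants forces $C^{p-1}=m(N-2-m)$, which is positive precisely because $0<m<N-2$; here $m>0$ since $\alpha>-2$ and $p>1$, while $m<N-2$ is exactly the inequality $p>p_S(\alpha)$. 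Since $m>0$ we get $u^*_0(r)\to\infty$ as $r\to0$, so $u^*_0$ is a singular solution in the sense of Definition~\ref{S1D1}; since $N-2-m>0$ we get $r^{N-2}u^*_0(r)=Cr^{N-2-m}\to\infty$ as $r\to\infty$, so $u^*_0$ is slow-decay.

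Then I would apply Theorem~\ref{S1T3}. The existence of $u^*_0$ shows that alternative (1) of that theorem fails, so alternative (2) holds with finitely many $0\le\mu_1<\cdots<\mu_{N_f}$. By part (a) of alternative (2), at each $\mu_i$ the singular solution of \eqref{ODE} is fast-decay; since the positive radial singular solution of \eqref{ODE} at $\mu=0$ is unique and equals the slow-decay solution $u^*_0$, it follows that $0\notin\{\mu_1,\ldots,\mu_{N_f}\}$, and as $\mu_1\ge0$ this forces $\mu_1>0$. Part (b) of alternative (2) then gives a dichotomy on the interval $[0,\mu_1)$: either \eqref{ODE} has a singular slow-decay solution for every $\mu\in[0,\mu_1)$, or it has no singular solution for any $\mu\in[0,\mu_1)$. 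Since $0\in[0,\mu_1)$ carries the slow-decay singular solution $u^*_0$, the second alternative is impossible, and we conclude that \eqref{ODE} has a singular slow-decay solution for all $\mu\in[0,\mu_1)$, which is the assertion.

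The one step that deserves care is the claim $\mu_1>0$: it rests on the uniqueness of the positive radial singular solution of \eqref{ODE} at $\mu=0$, which rules out a fast-decay singular solution coexisting with $u^*_0$, and hence rules out $\mu=0$ being one of the transition values $\mu_i$. For $K(r)\equiv r^\alpha$ and $p>p_S(\alpha)$ this uniqueness is classical and is in any case subsumed by the uniqueness results accompanying the analysis of \eqref{ODE} in this paper. Every other step is either a one-line substitution or a direct reading of the case structure in Theorem~\ref{S1T3}; note in particular that no hypothesis $p<p_{JL}(\alpha)$ is needed here, since Theorem~\ref{S1T3} itself requires only $p>\max\{p_S(\alpha),p_S(\beta)\}$.
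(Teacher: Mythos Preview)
Your argument is correct and matches the paper's: exhibit the explicit $u^*_0$ as a singular slow-decay solution at $\mu=0$ and then invoke the case structure of Theorem~\ref{S1T3}, using uniqueness of the singular solution (Theorem~\ref{singexists}) to force $\mu_1>0$. One minor slip: the inequality $m<N-2$ is equivalent to $p>(N+\alpha)/(N-2)$, not to $p>p_S(\alpha)$, but since $p_S(\alpha)>(N+\alpha)/(N-2)$ for $\alpha>-2$ the implication you need still holds.
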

\begin{proof}
When $\mu=0$, \eqref{ODE} has a singular slow-decay solution $u_0^*$.
On the other hand, \eqref{ODE} does not have a singular solution for large $\mu>0$, because of Theorem~\ref{S1T3}~(2)(c).
It follows from Theorem~\ref{S1T3}~(2)(b) that there is $\mu_1>0$ such that Theorem~\ref{S1T3}~(2)(b)(i) occurs for $\mu\in [0,\mu_1)$.
Thus, \eqref{ODE} has a singular slow-decay solution for $\mu\in [0,\mu_1)$.
\end{proof}
The following is an immediate consequence of Theorems~\ref{S1T2} and Corollary~\ref{S1T3'}.
\begin{corollary}\label{S1C6}
Let $K(r)\equiv r^\alpha$ and $p_S(\alpha)<p<p_{JL}(\alpha)$. Assume \eqref{f02} and \eqref{finfty2}.
Let $\mu_1$ be given in Corollary~\ref{S1T3'}.
Then for each $\mu\in [0,\mu_1)$, \eqref{ODE} has infinitely many bounded solutions that are not uniformly bounded.
\end{corollary}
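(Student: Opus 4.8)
The plan is to obtain this corollary by chaining together Corollary~\ref{S1T3'} and Theorem~\ref{S1T2}, with no new analytic input. First I would fix an arbitrary $\mu\in[0,\mu_1)$ and apply Corollary~\ref{S1T3'}, which produces a singular slow-decay solution $u^*(r)$ of \eqref{ODE}. In particular $u^*$ is a singular solution in the sense of Definition~\ref{S1D1}: the slow-decay property is a statement about $r\to\infty$ only, while singularity refers to the blow-up $\lim_{r\to0}u^*(r)=\infty$, so the two notions are compatible and the former implies nothing that would contradict the latter. Thus, for every $\mu\in[0,\mu_1)$, equation \eqref{ODE} has a singular solution.

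Next I would verify that the hypotheses of Theorem~\ref{S1T2} are all in force for the present $\mu$: we have $N\ge3$; since $K(r)\equiv r^\alpha$, condition \eqref{K0} holds with $k_0=1$ and the given $\alpha>-2$ (indeed $K(r)=r^\alpha$ exactly, so the $o(1)$ term vanishes); \eqref{f02} is among the standing assumptions of Corollary~\ref{S1C6}; and the exponent range $p_S(\alpha)<p<p_{JL}(\alpha)$ is exactly what is assumed. Having just shown that \eqref{ODE} admits a singular solution, Theorem~\ref{S1T2} applies and yields a sequence $\{u_j\}_{j=0}^\infty$ of bounded solutions of \eqref{ODE} with $\|u_j\|_\infty\to\infty$ as $j\to\infty$. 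Such a sequence consists of infinitely many distinct solutions and is manifestly not uniformly bounded, which is the assertion. Since $\mu\in[0,\mu_1)$ was arbitrary, the proof is complete.

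There is essentially no obstacle here; the only point worth a line of attention is the bookkeeping just described, namely confirming that the output of Corollary~\ref{S1T3'} (a \emph{singular slow-decay} solution) is an admissible input to Theorem~\ref{S1T2} (which only requires a \emph{singular} solution), and that the narrower exponent window $p_S(\alpha)<p<p_{JL}(\alpha)$ assumed in Corollary~\ref{S1C6} is compatible with the wider window $p>p_S(\alpha)$ used in Corollary~\ref{S1T3'}. Both checks are immediate, so the corollary is, as stated, a two-line deduction from the two cited results. One could equivalently route the argument through Corollary~\ref{S1C1}, using the implication (i)$\Rightarrow$(ii), but invoking Theorem~\ref{S1T2} directly is the shortest path.
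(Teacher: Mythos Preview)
Your proposal is correct and follows exactly the paper's approach: the paper states that Corollary~\ref{S1C6} is ``an immediate consequence of Theorems~\ref{S1T2} and Corollary~\ref{S1T3'}'', which is precisely the chain you describe. Your additional remarks verifying the hypotheses are sound and the argument is complete.
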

Corollary~\ref{S1C6} is new even in the case $K(r)\equiv 1$.
When $p_S(\alpha)<p<p_{JL}(\alpha)$, existence of a singular slow-decay solution of \eqref{ODE} for $\mu=0$ leads to existence of infinitely many bounded solutions of \eqref{ODE} for small $\mu>0$ even in the case where $K$ is general.
\bigskip

Let us recall previous studies.
When $\mu=0$, there is a vast amount of literature, and existence, nonexistence and multiplicities of solutions and other properties have been studied in these four decades.
See {\it e.g.,} \cite{DN85,LN88} and references therein.
However, the case $\mu>0$ has not been studied as extensively as the case $\mu=0$.
Bernard~\cite{B96} first studied existence of a solution of \eqref{PDE} with $K(x)\equiv 1$ when $\mu>0$.
In \cite{B96} it was shown that if $N\ge 3$, $p>N/(N-2)$, $f\in C^{0,\gamma}(\R^N)$ for $0<\gamma\le 1$ and
$$
0\le \mu f(x)\le\frac{1}{(p-1)^{1/(p-1)}}\left\{\frac{2}{p}\left(N-2-\frac{2}{p-1}\right)\right\}^{\frac{p}{p-1}}\frac{1}{(1+|x|^2)^{p/(p-1)}},\ \ f\not\equiv 0,
$$
then \eqref{PDE} with $K(x)\equiv 1$ has a solution.
After \cite{B96} this research has been generalized in various directions.
In \cite{B02,B09,BCP02,BN01,DGL08,DLY09,DY08,HMP06,IK24,LG11} existence of solutions of \eqref{PDE} was studied under various conditions on $K$ and $f$.
In \cite{B02,B09,DGL08,DLY09,DY08} asymptotic behaviors of solutions near $|x|=\infty$ were also studied.
Other qualitative properties were studied in \cite{DLY09,IK24}.

Existence of infinitely many solutions of \eqref{PDE} in the case $\mu>0$ was studied in \cite{BCP02,BN01,LG11}.
In \cite{BN01} Bae and Ni proved that there is $\mu^*>0$ such that \eqref{PDE} with $K(x)\equiv 1$ has infinitely many solutions for every $0<\mu<\mu^*$ if $p>p_{JL}(0)$ and
$$
\max\{\pm f(x),0\}\le |x|^{-q_{\pm}}\ \ \text{for $x$ near $\infty$}.
$$
Here, $q_+>N-\lambda_2$, $q_->N-\lambda_2-\frac{2}{p-1}$ and
$$
\lambda_2:=\frac{1}{2}\left\{
N-2-\frac{4}{p-1}+\sqrt{\left(N-2-\frac{4}{p-1}\right)^2-8\left(N-2-\frac{2}{p-1}\right)}\right\}.
$$
Moreover, in the case $p=p_{JL}(0)$ infinitely many solutions exist if $f$ has a compact support or $f$ does not change sign.

This theorem was generalized by Bae-Chang-Pahk~\cite{BCP02}.
In \cite{BCP02} they proved the following:
\begin{theorem}\label{BCP}
If $p\ge p_{JL}(\alpha)$, $\alpha>-2$, $K$ is a locally H\"{o}lder continuous function in $\R^N\setminus\{0\}$, $K(x)\ge 0$,
\begin{align*}
K(x)&=O(|x|^{\alpha})\ \text{as}\ |x|\to 0\ \text{for some}\ \alpha>-2,\\
K(x)&=c|x|^{\beta}+O(|x|^{-d})\ \text{as}\ |x|\to \infty
\end{align*}
for some $c>0$ and $d>N-\lambda_2(\alpha)-\frac{(2+\alpha)(p+1)}{p-1}$, and $f$ satisfies
\begin{align}
&f(x)=O(|x|^{\nu})\ \text{as}\ |x|\to 0\ \text{for some}\ \nu>-2,\label{BCPE1}\\
&-(1+|x|^{(2+\alpha)p/(p-1)})f(x)\le\min_{|z|=|x|}K(z),\label{BCPE2}\\
&\text{near}\ |x|=\infty,\ f(x)=O(|x|^{-q})\ \text{for some}\ q>N-\lambda_2(\alpha)-\frac{2+\alpha}{p-1},\label{BCPE3}
\end{align}
then there is $\mu^*>0$ such that for every $0\le\mu <\mu^*$, \eqref{PDE} has infinitely many solutions.
Here,
$$
\lambda_2(\alpha):=\frac{1}{2}\left\{
N-2-\frac{2(2+\alpha)}{p-1}+\sqrt{\left(N-2-\frac{2(2+\alpha)}{p-1}\right)^2-4(\alpha+2)\left(N-2-\frac{2+\alpha}{p-1}\right)}\right\}.
$$
\end{theorem}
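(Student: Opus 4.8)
This is the theorem of Bae--Chang--Pahk \cite{BCP02}, refining Bae--Ni \cite{BN01}; I indicate the line of attack. The whole argument is built around the singular profile of the \emph{homogeneous} ($\mu=0$) equation, and the hypothesis $p\ge p_{JL}(\alpha)$ is used precisely so that this profile acts as a monotone, non-oscillatory attractor. \textit{Step 1 (the homogeneous profile).} Under the stated behaviour of $K$ and $p\ge p_{JL}(\alpha)>p_S(\alpha)$, the equation $\Delta u+K(|x|)u^p=0$ has a positive radial singular solution $u_0^*$ with $u_0^*(r)=(L_0+o(1))r^{-m}$ as $r\to0$, where $m=\frac{2+\alpha}{p-1}$ and $L_0>0$, together with the ordered one-parameter family of solutions regular at the origin, each of which decays slowly and lies strictly below $u_0^*$ on $(0,\infty)$. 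The ordering is clean because, in Emden--Fowler variables $t=\log r$, $u=r^{-m}w(t)$, the autonomous limit equation has the constant state $w\equiv L_0$ as a stable node exactly when $p\ge p_{JL}(\alpha)$; for $p_S(\alpha)<p<p_{JL}(\alpha)$ it is instead a focus, the regular solutions spiral around $u_0^*$, and the argument below breaks down.

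\textit{Step 2: a singular supersolution of \eqref{PDE} for small $\mu$.} For $0\le\mu<\mu^*$ I would seek a barrier $\bar u_\mu=u_0^*+v_\mu$ with $0<v_\mu=O(\mu)$ solving
\[
-\Delta v_\mu-pK(u_0^*)^{p-1}v_\mu\ \ge\ \mu f+K\bigl[(u_0^*+v_\mu)^p-(u_0^*)^p-p(u_0^*)^{p-1}v_\mu\bigr]
\]
on $\R^N\setminus\{0\}$, which makes $\bar u_\mu$ a strict supersolution of \eqref{PDE}. The operator on the left is a perturbation of $-\Delta-pm(N-2-m)|x|^{-2}$, and $p\ge p_{JL}(\alpha)$ is equivalent to $pm(N-2-m)\le\frac{(N-2)^2}{4}$, so by Hardy's inequality it is a nonnegative operator obeying a maximum principle; one then solves the displayed inequality for $v_\mu\ge0$ in a weighted space calibrated to the singularity rate $r^{-m}$ at $0$ and the decay rate $\lambda_2(\alpha)$ at $\infty$. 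This is where the remaining hypotheses enter: \eqref{BCPE1}, \eqref{BCPE3} and the lower bound on $d$ say exactly that $\mu f$ and the error $O(|x|^{-d})$ in $K(x)=c|x|^\beta+O(|x|^{-d})$ land in the target space, and a short computation shows $q>N-\lambda_2(\alpha)-\frac{2+\alpha}{p-1}$ is strictly stronger than the bound $q>m+2$ needed to keep $\bar u_\mu$, and the solutions trapped beneath it, positive near infinity.

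\textit{Step 3: trapping a continuum of solutions.} For $\gamma>0$ let $u(\cdot;\gamma)$ solve the initial value problem for \eqref{ODE} with $u(0)=\gamma$, $u'(0)=0$. Since $\bar u_\mu(0)=+\infty$ and, for $\mu<\mu^*$, the perturbed Emden--Fowler flow near $w\equiv L_0$ is still of stable-node type, the trajectory of $u(\cdot;\gamma)$ cannot cross $\bar u_\mu$ (a transversal crossing would force winding near the node, which a stable node forbids). Hence $0<u(\cdot;\gamma)<\bar u_\mu$ on $(0,\infty)$, so $u(\cdot;\gamma)$ is global; being dominated by the profile $\sim L_0|x|^{-m}$, which is locally integrable at $0$ because $p>p_S(\alpha)$, it is a genuine positive radial entire solution of \eqref{PDE}. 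Distinct $\gamma$ give distinct solutions because $u(0;\gamma)=\gamma$, so \eqref{PDE} has a continuum, hence infinitely many, positive solutions.

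\textit{Main obstacle.} The analytic heart is Step 2: constructing the weighted spaces and proving the linearised operator invertible with positivity-preserving inverse all the way up to $p=p_{JL}(\alpha)$, where its two indicial exponents collide and the spectral gap (with the Hardy inequality) degenerates, so that logarithmic resonances must be controlled --- presumably this is why, as in \cite{BN01}, the borderline $p=p_{JL}(\alpha)$ requires $f$ to have compact support or a fixed sign. The secondary difficulty is the no-crossing assertion of Step 3 for the \emph{non-autonomous} perturbed flow, which the sharp decay hypothesis on $q$ is tailored to make work.
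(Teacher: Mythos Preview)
The paper does not contain a proof of this theorem. Theorem~\ref{BCP} is quoted in the introduction as a prior result of Bae--Chang--Pahk~\cite{BCP02}, purely for context (to delimit the exponent range $p\ge p_{JL}(\alpha)$ already handled in the literature, as opposed to the range $p_S(\alpha)<p<p_{JL}(\alpha)$ treated here). No argument for it is given anywhere in the present paper, so there is no ``paper's own proof'' against which to compare your sketch.

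On the sketch itself, a brief remark. You correctly identify the structural role of $p\ge p_{JL}(\alpha)$: the Emden--Fowler equilibrium is a stable node rather than a focus, so the regular solutions of the homogeneous problem are strictly ordered below the singular one. However, the mechanism in \cite{BCP02} (and in \cite{BN01}) is not the linearisation-plus-Hardy supersolution you propose in Step~2. They work directly with the ordered one-parameter family $\{u_\gamma\}$ of regular homogeneous solutions and with the explicit asymptotic gap $u_{\gamma'}-u_\gamma\sim Cr^{-m-\lambda_2(\alpha)}$ at infinity; the decay conditions on $q$ and $d$ in the statement are calibrated precisely so that $\mu f$ and the error $K(x)-c|x|^\beta$ are dominated by this gap, which is what allows $u_\gamma$ and $u_{\gamma'}$ (or mild modifications) to serve as sub- and supersolutions of the inhomogeneous problem for small $\mu$. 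Your Step~3 no-crossing argument (``a transversal crossing would force winding near the node'') is not a valid deduction for the non-autonomous perturbed flow; a stable node forbids infinitely many crossings, not a single one, and in any case it is the sub/supersolution comparison, not a phase-plane argument, that does the work in \cite{BCP02}.
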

In \cite{LG11} Lai and Ge proved the following:
\begin{theorem}\label{LG}
Let $K(x)\equiv 1$ and $\mu=1$.
If $N\ge 4$, $p>(N+1)/(N-3)$, $f$ satisfies
\begin{align*}
&f(x)=\eta(x)f_1(x),\ f\not\equiv 0,\\
&f_1(x)\in C^{0,\gamma}(\R^N),\ 0\le  f_1(x)<|x|^{-q}\ \text{for some}\ q>2+\frac{2}{p-1},\\
&\eta(x)\in C^{\infty}(\R^N),\ 0\le\eta(x)\le 1,
 \eta(x)=
\begin{cases}
0 & \text{for}\ |x|\le R_1,\\
1 & \text{for}\ |x|\ge R_1+1,
\end{cases}
\end{align*}
then there exists a large $R_1>0$ such that \eqref{PDE} has a continuum of solutions.
If $(N+2)/(N-2)<p\le (N+1)/(N-3)$, the same result holds provided that $f$ is symmetric with respect to $n$ coordinate axes.
\end{theorem}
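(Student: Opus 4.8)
The classical approach to such results, due to Bae--Ni \cite{BN01} and Bae--Chang--Pahk \cite{BCP02}, is the sub- and supersolution method, and it is where I would start. Since $f\ge 0$, every regular entire solution $w$ of $\Delta w+w^p=0$ is a subsolution of \eqref{PDE} (with $K\equiv 1$, $\mu=1$), as $\Delta w+w^p+f=f\ge 0$; one then needs a supersolution just above it, of the form $w+\zeta$ with $\zeta>0$ solving $\Delta\zeta+p\,w^{p-1}\zeta\le -f$. Carrying this out with $\zeta$ positive and decaying (the hypothesis $q>2+\frac{2}{p-1}=\frac{2p}{p-1}$ is exactly what forces $\zeta$ to decay faster than the singular profile $\varphi(x)=L|x|^{-2/(p-1)}$, $L=\{\tfrac{2}{p-1}(N-2-\tfrac{2}{p-1})\}^{1/(p-1)}$) requires the linearized Hardy-type operator $\Delta+pL^{p-1}|x|^{-2}$ to be well behaved, which happens precisely for $p\ge p_{JL}(0)$; in that range one obtains a continuum of solutions, parametrised by the scaling family $w=w_\alpha$ with $w_\alpha(0)=\alpha$. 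The point of Theorem~\ref{LG} is that the conclusion persists below $p_{JL}(0)$ --- down to $p_S(N-1)=\tfrac{N+1}{N-3}$ for general $f$, and down to $p_S(N)=\tfrac{N+2}{N-2}$ within the class of functions symmetric in $n$ coordinate directions --- a range that (for $4\le N\le 10$) is not reached by the barrier method at all, so a new argument is needed.

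For this I would argue perturbatively around the regular family. Let $v_0:=(-\Delta)^{-1}f\ge 0$; since $\supp f\subset\{|x|\ge R_1\}$ and $f(x)=O(|x|^{-q})$, one has $v_0=O(R_1^{2-q})$ on $\{|x|\le R_1\}$ (hence uniformly small as $R_1\to\infty$) and $v_0(x)=O(|x|^{2-q})=o(\varphi(x))$ at infinity. Writing a candidate solution as $u=w_\alpha+v_0+\psi$, equation \eqref{PDE} becomes $\Delta\psi+p\,w_\alpha^{p-1}\psi=-p\,w_\alpha^{p-1}v_0-R(v_0+\psi)$, with $R$ superquadratic in its argument; the source is small (its near-origin part is $O(\alpha^{p-1}R_1^{2-q})$ and the rest decays fast) and $R$ is quadratically small, so I would solve for $\psi$ by a contraction in a weighted Hölder space after inverting $L_\alpha:=-\Delta-p\,w_\alpha^{p-1}$ transversally to its kernel. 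That kernel contains the translation modes $\partial_{x_i}w_\alpha$ and the scaling mode $\partial_\alpha w_\alpha$; restricting to functions invariant under the prescribed reflections removes the odd translation modes, and a Lyapunov--Schmidt reduction solves the equation modulo $\mathrm{span}\{\partial_\alpha w_\alpha\}$ for each fixed $\alpha$. Letting $\alpha$ range over an interval then gives a continuum of positive solutions of \eqref{PDE}, mutually distinct because $u_\alpha(0)=\alpha+o(1)$ as $R_1\to\infty$.

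The main obstacle I expect is the weighted linear theory for $L_\alpha$ on this symmetric space, with constants uniform in $\alpha$, precisely in the regime where the barrier method fails: one must pick the weight so as to avoid the spectrum and indicial set of $L_\alpha$ (which degenerates as $\alpha\to\infty$, when $w_\alpha^{p-1}$ concentrates near the origin) while still dominating both the source and the superquadratic error $R$ in the chosen norm. The thresholds $\tfrac{N+1}{N-3}$ and $\tfrac{N+2}{N-2}$ should emerge here as the borderline cases at which --- after the effective lowering of dimension caused by the reflection symmetry --- the relevant building blocks exist and the weighted estimates and nonlinear bounds still close; establishing this uniformly in $\alpha$, so that the family $\{u_\alpha\}$ is genuinely a continuum, is the technical heart of the proof. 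For the details see \cite{LG11}.
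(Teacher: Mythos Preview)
This theorem is not proved in the present paper. It is quoted in the introduction as a result of Lai and Ge \cite{LG11}, preceded by ``In \cite{LG11} Lai and Ge proved the following:''; the paper gives no argument for it and uses it only as a point of comparison with Corollary~\ref{S1C6}. So there is no ``paper's own proof'' to compare your proposal against, and your concluding sentence ``For the details see \cite{LG11}'' is already all that the paper itself offers.

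Your extended sketch is therefore extra material rather than a reconstruction of anything in the paper. As a heuristic outline of how a Lyapunov--Schmidt/gluing argument around the regular family $w_\alpha$ might produce a continuum of solutions, it is plausible in broad strokes, but several points are speculative: you have not explained why the exponent $(N+1)/(N-3)$ should appear as the natural threshold in the weighted linear theory (your identification of it with a ``$p_S(N-1)$'' is not in the paper's notation and is not justified), nor why full reflection symmetry in all $n$ coordinates pushes the threshold down exactly to $(N+2)/(N-2)$. These are the substantive claims, and your sketch only asserts that they ``should emerge'' without indicating the mechanism. If you intend to include this as an actual proof rather than a citation, those gaps would need to be filled; if you intend only to cite the result, the sketch is unnecessary and the single reference to \cite{LG11} suffices, which is what the paper does.
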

Let us consider the case $K(r)\equiv r^{\alpha}$, $\alpha>-2$.
Although our theorems hold only for radial solutions, Corollary~\ref{S1C6} covers the range $p_S(\alpha)<p<p_{JL}(\alpha)$.
Therefore, when $K(r)\equiv r^{\alpha}$, Corollary~\ref{S1C6} and Theorem~\ref{BCP} cover the whole supercritical range $p>p_S(\alpha)$ when $f$ satisfies the assumptions of both Corollary~\ref{S1C6} and Theorem~\ref{BCP}.

Next, we compare our theorems with Theorem~\ref{LG}.
In the case where $f$ is symmetric with respect to $n$ coordinate axes, Theorem~\ref{LG} guarantees existence of infinitely many solutions of \eqref{PDE} for all $p>p_S(0)$, while Corollary~\ref{S1C6} holds for $p_S(\alpha)<p<p_{JL}(\alpha)$.
In Theorem~\ref{LG} $f$ needs to be identically zero near the origin, while our assumption on $f$ admits a singularity as long as \eqref{f02} is satisfied.\\

Our idea of the proof of Theorem~\ref{S1T1} is obtaining a singular solution $u^*$ as a pointwise limit of bounded solutions $\{u_j\}_{j=0}^\infty$ with $\|u_j\|_{L^\infty}\to\infty$ as $j\to\infty$ (see Theorem~\ref{convtosing} and Section~5).
We do not employ the Pohozaev identity so that conditions on $K(r)$ are not assumed except those on behaviors near $0$ or $\infty$.
Instead, we compare the regular solution $u(r,\zeta)$ of problem~\eqref{ODE} with the explicit singular solution $u_0^*$ to
the homogeneous problem
\[
u''+\frac{N-1}{r}u'+r^\alpha u^p=0\ \textrm{for}\ r>0,\ u(r)>0.
\]
We first prove that asymptotic behavior of singular solutions to problem~\eqref{ODE} as $r\to+0$ is the same as that of $u^*_0$ (see Lemma~\ref{u0asypro}), by imposing the Emden--Fowler transform (see \eqref{wg},\eqref{weq}) and applying a lemma on asymptotic behavior of solutions to perturbed ODEs (see Lemma~\ref{S2L2} and \cite[Lemma 3.2]{MN20}). Furthermore, we estimate differences of regular solutions $u(r,\zeta)$ with $u(0,\zeta)=\zeta$ to problem~\eqref{ODE}, from the regular solution $\overline{u}(r,\zeta)$ with $\overline{u}(0,\zeta)=\zeta$ to the homogeneous problem near $0$, by the Gronwall inequality (see Lemma~\ref{uapprox1}). Finally, we compare $u(r,\zeta)$ with $u_0^*$ by applying a perturbation argument on the Emden--Fowler--transformed equation (see Lemma~\ref{uapprox2}).

In the proof of Theorem~\ref{S1T2}, it is crucial to count the intersection number of $u(r,\zeta)$ and $u^*$ near $0$.
By comparisons mentioned above, we observe that the intersection number $\calZ_{(0,\rho)}[u^*-u(\cdot,\zeta)]$ of $u(r,\zeta)$ and $u^*$ in $(0,\rho)$ is similar to $\calZ_{(0,\rho)}[u_0^*-\overline{u}(\cdot,\zeta)]$ if $\zeta$ is very large. In particular, if $p<p_{JL}(\alpha)$, since $\calZ_{(0,\rho)}[u_0^*-\overline{u}(\cdot,\zeta)]\to\infty$ as $\zeta\to\infty$ (see \ref{ubarintersect} and \cite{MT17,W93}), $\calZ_{(0,\rho)}[u_0-u(\cdot,\zeta)]\to\infty$ also holds (see Theorem~\ref{inftyintersect}). Because $u(r,\zeta)$ must be an entire positive solution for $\zeta$ at which $\calZ_{(0,\infty)}[u_0-u(\cdot,\zeta)]$ increases, this implies the conclusion of Theorem~\ref{S1T2}.

The idea of the proof of Theorem~\ref{S1T3} is comparing the values of the derivatives at a certain point $R_1\gg 0$ of singular solutions to those of fast-decay solutions to the problem
\begin{equation}\label{farequation}
v''+\frac{N-1}{r}v'+K(r)v^p+\mu f(r)=0\ \textrm{for}\ r>R_1.
\end{equation}
We first establish the analytic dependence on $\mu$ of the singular solutions to
\[
    u''+\frac{N-1}{r}u'+K(r)\max\{u,0\}^p+\mu f(r)=0\ \textrm{for $r>0$},\ \lim_{r\to+0}r^{\theta} u(r)=\gamma,
\]
by the implicit function theorem (see Theorem~\ref{singexists}). We next parametrize fast-decay solutions $v=v_{\eta,\mu}$ to problem~\eqref{farequation} by the values of $\eta:=\lim\limits_{r\to\infty}r^{N-2}v(r)$ and $\mu$ analytically, and show that $\eta$ can be expressed by an analytic function $\calH(\xi,\mu)$ of $\xi=v_{\eta,\mu}(R_1)$ and $\mu$. This enables us to characterize the values of $\mu$ which admit singular fast-decay solutions to problem~\eqref{ODE} by the null points of the analytic function $u^{*\prime}_\mu(R_1)-v'_{\calH(u^*_{\mu}(R_1),\mu),\mu}(R_1)$ (see Lemma~\ref{SFsolconstraint}). By the identity theorem, such $\mu$ only exists discretely, which together with the upper bound of $\mu$ (see Lemma~\ref{mubound}) implies the finiteness (see Lemma~\ref{SFfinite}). Finally, we see that both the set of the value of $\mu$ which admits singular slow-decay solutions and  the set of the value of $\mu$ which admits no singular solutions are open in $[0,\infty)$, and complete the proof of Theorem~\ref{S1T3}.\\

This paper consists of six sections.
In Section~2 we establish various apriori estimates for both bounded and singular solutions of \eqref{ODE}.
In particular, the principal term of a singular solutions is uniquely determined by Lemma~\ref{u0asypro}.
In Section~3 we construct a singular solution of \eqref{ODE} near $r=0$ and  prove the uniqueness of a singular solution.
Note that the uniqueness is proved from information only near $0$.
Moreover, we show that a large bounded solution converges to the singular solution in $C^2_{\rm loc}(0,r^*)$ for some $r^*>0$.
In Section~4 we show that the intersection number between a large bounded solution $u(r,\zeta)$ and the singular solution $u^*(r)$ near $r=0$ diverges as $\zeta\to\infty$.
In Section~5 we prove Theorems~\ref{S1T1} and \ref{S1T2}, using the property of the intersection number proved in Section~4.
In Section~6 we prove Theorem~\ref{S1T3}.
In particular we show that the set of $\mu$ that has a slow-decay solution is open and that the set of $\mu$ that has no positive solution is also open.
Moreover, we show that the set of $\mu$ that has a fast-decay solution is finite, using an analyticity argument.
Then, the conclusions of Theorem~\ref{S1T3} follow from the structure of the three sets of $\mu$.
As a consequence, Corollaries~\ref{S1T3'} and \ref{S1C6} are established.

\section{Apriori estimates}
The goal of this section is to prove Lemma~\ref{u0asypro}.
The proofs in this section are based on \cite[Lemmas 2.1 and 2.2]{MN20} and \cite[Lemma 3.2]{MN20}.

We define
$$
  \theta:=\frac{2+\alpha}{p-1},\ a:=N-2-2\theta,\ c:=N-2-\theta,\ A^{p-1}:=\theta c,
$$
and
\begin{equation}\label{wg}
  w(t):=e^{\theta t}u(e^{t}),\ L(t):=e^{-\alpha t}K(e^{t}),\ g(t):=e^{(2+\theta) t}f(e^t),
\end{equation}
so that equation \eqref{ODE} is equivalent to
\begin{equation}\label{weq}
  w''+aw'-A^{p-1}w+L(t)w^p+\mu g(t)=0\ \textrm{for $t\in\R$}.
\end{equation}
We note that \eqref{K0} implies that
\begin{equation}\label{L0}
  L(t)=k_0+o(1)\ \textrm{as $t\to-\infty$},
\end{equation}
and that
$$
a>0\ \textrm{if $p>p_S(\alpha)$.}
$$
Since \eqref{f02} and \eqref{finfty2} are assumed, $f$ satisfies the following:
\begin{gather}
\int_0^1 rf(r)\,dr<\infty,\label{f0}\\
\int_1^\infty r^{N-1}f(r)\,dr<\infty.\label{finfty}
\end{gather}
It follows from \eqref{f02} and \eqref{f0} that
\begin{equation}\label{g0}
  \lim_{t\to-\infty}g(t)=0,\quad \int_{-\infty}^0 e^{-\theta s}g(s)\,ds<\infty.
\end{equation}

We mention existence of a solution of the initial value problem
\begin{equation}\label{S2E0}
\begin{cases}
u''+\frac{N-1}{r}u'+K(r)\max\{u,0\}^p+\mu f(r)=0 & \text{for}\ r>0,\\
u(0)=\zeta,
\end{cases}
\end{equation}
since $f(r)$ may have a singularity at $r=0$.
By applying the contraction mapping theorem to the problem
$$
u(r)=\zeta-\int_0^r\frac{s^{2-N}-r^{2-N}}{N-2}s^{N-1}
\left(K(s)u(s)^p+\mu f(s)\right)ds,
$$
we can prove that a solution of \eqref{S2E0} near $r=0$ in the sense of Definition~\ref{S1D1} uniquely exists.
Note that when $u(r)$ is bounded near $r=0$, the RHS is integrable, since $K(r)=O(r^{\alpha})$, $\alpha>-2$, (by \eqref{K0}) and $f(s)=O(s^{\gamma})$, $\gamma>-2$ (by \eqref{f02}).
Therefore, the solution of \eqref{S2E0} near $r=0$ exists for all $\zeta>0$.
\begin{lemma}\label{S2L1}
Let $N\ge 3$ and $p>1$.
Assume that \eqref{K0} and \eqref{f0} hold.\\
(i) For both bounded and singular positive solutions $u$ of \eqref{ODE} near $r=0$, there are constants $r_0\in(0,\infty]$ and $C_0>0$ such that
\begin{equation}\label{asy1}
  \sup_{r\in(0,r_0)}r^\theta u(r)\le C_0.
\end{equation}
(ii) If $u$ is singular, then
\begin{equation}\label{asy2}
  \limsup_{r\to 0}r^{\theta}u(r)>0.
\end{equation}
\end{lemma}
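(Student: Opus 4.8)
The plan is to treat bounded and singular solutions separately, the singular case being the only substantial one. If $u$ is a bounded solution near $r=0$, then $\lim_{r\to0}u(r)$ is finite and, since $\theta=(2+\alpha)/(p-1)>0$, we have $r^\theta u(r)\to0$ as $r\to0$; choosing $r_0$ so small that $r^\theta u(r)\le1$ on $(0,r_0)$ gives \eqref{asy1} with $C_0=1$, and \eqref{asy2} does not apply. So suppose $u$ is singular on some $(0,r_1)$. First I would record the monotonicity forced by $K(r)u^p+\mu f(r)\ge0$: the identity $(r^{N-1}u')'=-r^{N-1}(K(r)u^p+\mu f(r))\le0$ shows $r^{N-1}u'(r)$ is nonincreasing, so $\ell:=\lim_{r\to0}r^{N-1}u'(r)$ exists; moreover $\ell$ is finite and $\ell\le0$, and $u'<0$ near $0$, for otherwise $u$ would be nondecreasing on some interval $(0,\rho)$ and hence bounded there, contradicting $u(0^+)=\infty$. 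Integrating the equation over $(0,r)$ then gives, for $0<r<r_1$,
\[
  -r^{N-1}u'(r)=\int_0^r s^{N-1}\bigl(K(s)u(s)^p+\mu f(s)\bigr)\,ds-\ell\ \ge\ \int_0^r s^{N-1}K(s)u(s)^p\,ds,
\]
all terms being finite (the middle integral is finite by \eqref{f0} together with the equation itself).

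For part (i) I would now use that $u$ is decreasing together with $K(s)\ge\tfrac{k_0}{2}s^\alpha$ near $0$ (from \eqref{K0}): since $u(s)\ge u(r)$ for $s\in[r/2,r]$,
\[
  \int_0^r s^{N-1}K(s)u(s)^p\,ds\ \ge\ \int_{r/2}^r s^{N-1}K(s)u(s)^p\,ds\ \ge\ \frac{k_0}{2}\,u(r)^p\!\int_{r/2}^r s^{N-1+\alpha}\,ds\ =\ \kappa_0\,u(r)^p\,r^{N+\alpha},
\]
where $\kappa_0:=\tfrac{k_0}{2(N+\alpha)}\bigl(1-2^{-(N+\alpha)}\bigr)>0$ (note $N+\alpha>N-2>0$). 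Combined with the preceding display this yields the differential inequality $-u'(r)\ge\kappa_0 r^{1+\alpha}u(r)^p$, i.e.
\[
  \frac{d}{dr}\!\left(\frac{u(r)^{1-p}}{p-1}\right)=\frac{-u'(r)}{u(r)^p}\ \ge\ \kappa_0\, r^{1+\alpha}\qquad\text{on }(0,r_1).
\]
Integrating from $0$ to $r$ and using $u(s)^{1-p}\to0$ as $s\to0$ (here the singularity enters) gives $\dfrac{u(r)^{1-p}}{p-1}\ge\dfrac{\kappa_0}{2+\alpha}\,r^{2+\alpha}$, hence $u(r)\le\bigl(\tfrac{2+\alpha}{(p-1)\kappa_0}\bigr)^{1/(p-1)}r^{-\theta}$, which is exactly \eqref{asy1} with $r_0=r_1$.

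For part (ii), suppose \eqref{asy2} fails; then $r^\theta u(r)\to0$, which in the Emden--Fowler variables \eqref{wg} means $w(t)\to0$ as $t\to-\infty$, while part (i) already gives $w(t)\le C_0$. By \eqref{weq}, $w$ solves $w''+aw'-A^{p-1}w+L(t)w^p+\mu g(t)=0$, where $L(t)\to k_0>0$ by \eqref{L0} and $g$ decays in the sense of \eqref{g0}, so $L(t)w^p+\mu g(t)$ is a negligible perturbation of the linear operator $w\mapsto w''+aw'-A^{p-1}w$, whose characteristic exponents are $\theta>0$ and $-c$ with $c=N-2-\theta$. Invoking the asymptotic-behaviour lemma for perturbed linear equations (cf.\ \cite[Lemma 3.2]{MN20}), compatibility with $w(t)\to0$ forces the mode that does not decay as $t\to-\infty$ to vanish, so $w(t)=(\ell_0+o(1))e^{\theta t}$ for some $\ell_0\ge0$; but then $u(r)=r^{-\theta}w(\log r)=\ell_0+o(1)$ is bounded near $r=0$, contradicting that $u$ is singular. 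Hence \eqref{asy2} holds.

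The computations in the first two paragraphs are elementary ODE manipulations; the one genuinely delicate point is the last step, where one must verify that $L(t)w(t)^p+\mu g(t)$ meets the integrability hypotheses of the asymptotic lemma — this is precisely where the a priori bound $w\le C_0$ from part (i) and the decay \eqref{g0} of $g$ are used, and it is the main obstacle of the argument.
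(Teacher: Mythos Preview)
Your argument for part (i) is correct and essentially the same as the paper's: both exploit monotonicity of $u$ together with $K(r)\ge\tfrac{k_0}{2}r^\alpha$ to derive the differential inequality $-u'/u^p\ge C r^{1+\alpha}$ and then integrate. The only cosmetic difference is that you split off the trivial bounded case and integrate over $[r/2,r]$ rather than $[0,r]$.

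Part (ii) has a genuine gap. The reference \cite[Lemma 3.2]{MN20} does not say what you claim: as it is used in the paper's Lemma~\ref{S2L2}, that result concerns convergence to an equilibrium of $w''+aw'+H(w)+G(t,w)=0$ under a sign condition on $H$ and smallness of $G$, and it yields $\lim_{t\to-\infty}w(t)=\gamma$ when $0<\limsup w<\infty$. It does \emph{not} provide an asymptotic expansion of the form $w(t)=(\ell_0+o(1))e^{\theta t}$ for solutions tending to $0$. What you actually need is a Levinson-type asymptotic integration theorem for the system with coefficient $-A^{p-1}+L(t)w(t)^{p-1}\to -A^{p-1}$ and forcing $\mu g(t)$; carrying this out rigorously (handling the nonlinear coefficient as a vanishing perturbation, constructing the particular solution for $g$, and checking the dichotomy between the modes $e^{\theta t}$ and $e^{-ct}$) is nontrivial, and you yourself flag it as ``the main obstacle'' without resolving it. Note also that your phrase ``the mode that does not decay as $t\to-\infty$'' presupposes $c=N-2-\theta>0$, which is not among the stated hypotheses.

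The paper's proof of (ii) avoids this machinery entirely. It first shows $\liminf_{r\to0}(-r^{N-1}u'(r))=0$, hence the integral representation $-r^{N-1}u'(r)=\int_0^r s^{N-1}(Ku^p+\mu f)\,ds$. Assuming $r^\theta u(r)\to0$, a direct phase-plane argument on the transformed equation \eqref{weq} gives $(e^{at}w')'>0$ and then $w'\ge0$, i.e.\ $(r^\theta u)'\ge0$ near $0$. Feeding this monotonicity back into the integral representation yields $-r^{N-1}u'(r)\le \varepsilon r^{N-2}u(r)+\varepsilon r^{N-2}$ for small $r$, whence $u(r)=O(r^{-\varepsilon})$ for every $\varepsilon>0$; one more pass through the integral then gives $u$ bounded, the desired contradiction. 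This is elementary and self-contained, whereas your route outsources the crux of the argument to an external lemma that does not apply.
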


The following proof is a modification of \cite[Lemmas 2.1 and 2.2]{MN20} for homogeneous equations.
We present a proof for readers' convenience.
\begin{proof}
(i) It is clear that $u(r)$ is decreasing near $r=0$ for both bounded and singular positive solutions.
If $u$ has the first critical point $r_1\in (0,\infty)$, then $u''(r_1)=-K(r_1)u(r_1)^p-\mu f(r_1)<0$, and hence $r_1$ is a local maximum point.
Then, it contradicts that $u$ is decreasing near $r=0$.
Therefore, $u$ has no critical point, and hence $u'(r)<0$ for $r>0$.

Let $r_1>0$ be arbitrarily small.
Then, we have
\[
-r^{N-1}u'(r)\ge -r_1^{N-1}u'(r_1)+\int_{r_1}^r s^{N-1}K(s)u(s)^pds.
\]
Since $r_1>0$ is arbitrary small, we obtain
\[
-r^{N-1}u'(r)\ge\int_0^rs^{N-1}K(s)u(s)^pds.
\]
Since $K(r)\ge k_0r^{\alpha}/2$ for small $r>0$ and $u(r)$ is decreasing, we have
\[
-r^{N-1}u'(r)\ge\frac{k_0}{2}u(r)^p\int_0^rs^{N-1+\alpha}ds.
\]
It implies that $-u'(r)/u(r)^p\ge Cr^{1+\alpha}$ for small $r>0$.
Integrating it over $(0,r)$, we deduce that $u(r)^{1-p}\ge Cr^{2+\alpha}$ for small $r>0$.
Thus,
\[
u(r)^{p-1}\le Cr^{-(2+\alpha)}
\]
for small $r>0$. We obtain \eqref{asy1}.\\  
(ii) Let $u$ be a singular solution.
We show that
\begin{equation}\label{S2L1E1}
\liminf_{r\to 0}\left(-r^{N-1}u'(r)\right)=0.
\end{equation}
Assume on the contrary that $\liminf_{r\to 0}\left(-r^{N-1}u'(r)\right)=c>0$.
Then, there exists $r_1>0$ such that $-r^{N-1}u'(r)\ge c/2$ for $0<r<r_1$.
Integrating it over $(r,r_1)$, we obtain
\[
u(r)\ge u(r_1)+\frac{c}{2(N-2)}(r^{2-N}-r_1^{2-N}).
\]
This contradicts \eqref{asy1}.
Thus, \eqref{S2L1E1} holds.
  
Because of \eqref{S2L1E1}, there exists $\{r_k\}$ such that $r_k\to 0$ and $r_k^{N-1}u'(r_k)\to 0$ as $k\to\infty$.
Since $-r^{N-1}u'(r)+r_k^{N-1}u'(r_k)=\int_{r_k}^rs^{N-1}(K(s)u(s)^p+\mu f(s))ds$, by letting $k\to\infty$ we have
\begin{equation}\label{S2L1E2}
-r^{N-1}u'(r)=\int_0^rs^{N-1}(K(s)u(s)^p+\mu f(s))ds.
\end{equation}

Next, we prove \eqref{asy2}.
Assume the contrary, {\it i.e.},
\begin{equation}\label{S2L1E3}
\lim_{r\to 0}r^{\theta}u(r)=0.
\end{equation}
Then, it is equivalent to $w(t)\to 0$ as $t\to -\infty$.
We show that there is a small $r_1>0$ such that
\begin{equation}\label{S2L1E4}
(r^{\theta}u(r))'\ge 0\ \ \textrm{for}\ \ 0<r<r_1.
\end{equation}
By \eqref{weq} we see that there is $t_1\in\R$ such that
\begin{align*}
\left(e^{at}w'(t)\right)' & = e^{at}\left( A^{p-1}-L(t)w(t)^{p-1}-\frac{\mu e^{2t}f(e^t)}{u(e^t)}\right)w\\
&\ge e^{at}\left(A^{p-1}-\frac{A^{p-1}}{4}-\frac{A^{p-1}}{4}\right)w>0
\end{align*}
for $t<t_1$.
Therefore, $e^{at}w'(t)$ is increasing for $t<t_1$.
We show that
\begin{equation}\label{S2L1E5}
w'(t)\ge 0\ \ \textrm{for}\ \ t<t_1.
\end{equation}
Assume the contrary, {\it i.e.,} there exists $t_2\in (-\infty,t_1]$ such that $w'(t_2)<0$.
Then,
\[
e^{at}w'(t)\le e^{at_2}w'(t_2)<0\ \ \textrm{for}\ \ t<t_2,
\]
and hence $w'(t)\le e^{-at}e^{at_2}w'(t_2)<0$ for $t<t_2$.
Then, $w'(t)\to -\infty$ as $t\to -\infty$.
This is a contradiction, because $w(t)\to 0$ as $t\to -\infty$.
Thus, we obtain \eqref{S2L1E5} which implies \eqref{S2L1E4} with small $r_1>0$.

By \eqref{S2L1E3} we have
\[
r^2\frac{K(r)u(r)^p}{u(r)}=\frac{K(r)}{r^{\alpha}}\left(r^{\theta}u(r)\right)^{p-1}\to 0
\ \ \textrm{as}\ \ r\to 0.
\]
For each small $\e>0$, there exists $r_1>0$ such that
\begin{align}
\int_0^rs^{N-1}\left(K(s)u(s)^p+\mu f(s)\right)ds
&\le (N-2-\theta)\e\int_0^rs^{N-3}u(s)ds+C\int_0^rs^{N-1+\gamma}ds\nonumber\\
&\le (N-2-\theta)\e r^{\theta}u(r)\int_0^rs^{N-3-\theta}ds+Cr^{N+\gamma}\nonumber\\
&\le \e r^{N-2}u(r)+Cr^{N+\gamma}\ \ \textrm{for}\ \ 0<r<r_1,\label{S2L1E6}
\end{align}
where \eqref{f02} and \eqref{S2L1E4} are used.
We can choose $r_2\in (0,r_1)$ such that 
\begin{equation}\label{S2L1E7}
\e r^{N-2}u(r)+Cr^{N+\gamma}=\e r^{N-2}u(r)+Cr^{2+\gamma}r^{N-2}
\le \e r^{N-2}u(r)+\e r^{N-2}
\end{equation}
for $0<r<r_2$.
By \eqref{S2L1E2}, \eqref{S2L1E6} and \eqref{S2L1E7} we have
\[
-r^{N-1}u'(r)\le \e r^{N-2}u(r)+\e r^{N-2}\ \ \text{for}\ \ 0<r<r_2.
\]
This implies that $u(r)r^{\e}\le u(r_2)r_2^{\e}+(r_2^{\e}-r^{\e})$ for $0<r<r_2$.
Thus, $u(r)=O(r^{-\e})$ as $r\to 0$.
Then,
\[
-r^{N-1}u'(r)=\int_0^rs^{N-1}\left(K(s)u(s)^p+\mu f(s)\right)ds
\le C\int_0^rs^{N-1-\e p+\alpha}ds+Cr^{N+\gamma}\le Cr^{N-\e p+\alpha}+Cr^{N+\gamma}.
\]
Since $-u'(r)\le Cr^{1+\alpha-\e p}+Cr^{1+\gamma}$ for $0<r<r_2$ and $\e>0$ can be arbitrarily small, we see that $\limsup_{r\to 0}u(r)<\infty$.
This is a contradiction.
Thus, we obtain $\limsup_{r\to 0}r^{\theta}u(r)>0$.
\end{proof}

\begin{lemma}\label{S2L2}
Assume the same conditions as in Theorem~\ref{S1T1}. Let $w$ be a solution to \eqref{weq} satisfying
\begin{equation}\label{S2L2E1}
0<\limsup_{t\to-\infty}w(t)<\infty.
\end{equation}
Then 
\begin{equation}\label{w+asymp}
\lim\limits_{t\to-\infty}w(t)=\gamma:=k_0^{-\frac{1}{p-1}}A.
\end{equation}
\end{lemma}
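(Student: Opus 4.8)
The plan is to read \eqref{weq} as an \emph{asymptotically autonomous} second-order equation. By \eqref{L0} and $g(t)\to0$ (from \eqref{g0}), as $t\to-\infty$ its coefficients converge to those of the autonomous limit equation
\[
W''+aW'-A^{p-1}W+k_0W^p=0,
\]
whose only equilibria in $\{W\ge0\}$ are $W\equiv0$ and $W\equiv\gamma$, where $\gamma=k_0^{-1/(p-1)}A$ solves $k_0\gamma^{p-1}=A^{p-1}$ (recall $A^{p-1}=\theta c$). Once one knows that the orbit $t\mapsto(w(t),w'(t))$ stays in a compact set as $t\to-\infty$, its $\alpha$-limit set is forced by the Lyapunov structure of the limit equation to be one of these two equilibria, and \eqref{S2L2E1} excludes the trivial one. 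This is the scheme of \cite[Lemma 3.2]{MN20}, and most of the work is in verifying its hypotheses.

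I would first establish the a priori bounds. The hypotheses of Theorem~\ref{S1T1} include \eqref{K0} and, through \eqref{f02}, \eqref{f0}, so Lemma~\ref{S2L1}(i) supplies $T\in\R$ and $C_0>0$ with $0<w(t)\le C_0$ for all $t\le T$. Writing \eqref{weq} as $w''+aw'=\psi(t)$ with $\psi:=A^{p-1}w-L(t)w^p-\mu g(t)$, this bound together with \eqref{L0} and \eqref{g0} gives $M:=\sup_{t\le T}|\psi(t)|<\infty$. Since $a>0$ (because $p>p_S(\alpha)$), a maximum-principle argument yields $|w'(t)|\le M/a$ on $(-\infty,T]$: on any maximal subinterval $(t_*,t_0]$ of $(-\infty,t_0]$ on which $w'>M/a$ one has $w''=\psi-aw'<M-M=0$, so $w'$ is strictly decreasing there, which contradicts $w'(t_*)=M/a<w'(t_0)$ if $t_*$ is finite and forces $w(t)\to-\infty$ (against $w>0$) if $t_*=-\infty$; the bound $w'\ge-M/a$ is obtained symmetrically from $w\le C_0$. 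Hence $w''=\psi-aw'$ is bounded on $(-\infty,T]$ as well, so the orbit is precompact as $t\to-\infty$.

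With this compactness in hand, let $\Gamma$ be the (nonempty, compact) set of limit points of $(w(t),w'(t))$ as $t\to-\infty$. By the theory of asymptotically autonomous systems, $\Gamma$ is invariant, and internally chain transitive, for the limit equation (more precisely for its time reversal, $\Gamma$ being an $\alpha$-limit set). That equation, and its time reversal, possesses the Lyapunov functional
\[
\mathcal{E}(W,W'):=\frac12(W')^2-\frac{A^{p-1}}{2}W^2+\frac{k_0}{p+1}W^{p+1},
\]
whose derivative along solutions is $-a(W')^2$ and vanishes only on orbits with $W'\equiv0$, i.e.\ on the equilibria; thus $\mathcal{E}$ is strictly monotone along every non-equilibrium orbit. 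Moreover $\mathcal{E}(0,0)=0$ whereas $\mathcal{E}(\gamma,0)<0$, since the potential $W\mapsto-\tfrac{A^{p-1}}{2}W^2+\tfrac{k_0}{p+1}W^{p+1}$ vanishes at $W=0$ and has derivative $-A^{p-1}W+k_0W^p<0$ on $(0,\gamma)$. A chain transitive set carrying a strict Lyapunov functional must consist of equilibria on which that functional is constant; since $\mathcal{E}$ takes different values at $(0,0)$ and $(\gamma,0)$, $\Gamma$ reduces to a single point, $\{(0,0)\}$ or $\{(\gamma,0)\}$. The first would force $w(t)\to0$ as $t\to-\infty$, contradicting \eqref{S2L2E1}; therefore $(w(t),w'(t))\to(\gamma,0)$, and in particular \eqref{w+asymp} holds.

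The genuinely delicate point is the identification of $\Gamma$. Because \eqref{K0} yields only $L(t)-k_0=o(1)$, with no integrability, a direct energy computation does not give $\int_{-\infty}^{T}(w')^2\,dt<\infty$, so one cannot simply invoke Barbalat's lemma; one really needs the qualitative (chain-recurrence) structure of the limit flow, which is what \cite[Lemma 3.2]{MN20} packages. The analytic input that makes it applicable is the a priori $C^1$-bound of the previous step, which is exactly what makes the orbit precompact.
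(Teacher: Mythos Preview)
Your argument is correct and ultimately rests on the same dynamical lemma (\cite[Lemma~3.2]{MN20}) as the paper. The route differs in one cosmetic but instructive point: the paper first constructs the explicit particular solution
\[
\psi(t)=-\int_{-\infty}^t\frac{e^{\theta(t-s)}-e^{-c(t-s)}}{N-2}\,g(s)\,ds
\]
of $\psi''+a\psi'-A^{p-1}\psi=-g$, checks that $\psi(t)=o(e^{\theta t})$, sets $w_1:=w-\mu\psi$, and applies the cited lemma to the equation for $w_1$, which no longer contains the forcing $g$. You instead keep $\mu g(t)$ as part of the vanishing perturbation and compensate by supplying the $C^1$ precompactness yourself via the maximum-principle bound $|w'|\le M/a$; this is equally valid since $g(t)\to0$ by \eqref{g0}. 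The paper's substitution buys a literal match with the hypotheses of \cite[Lemma~3.2]{MN20} as stated there, whereas your version is more self-contained and makes the underlying Lyapunov/chain-recurrence mechanism explicit. One minor quibble: the upper bound $w\le C_0$ on $(-\infty,T]$ is immediate from the hypothesis $\limsup_{t\to-\infty}w(t)<\infty$, so your appeal to Lemma~\ref{S2L1}(i) (which is phrased for solutions of \eqref{ODE} rather than abstract solutions of \eqref{weq}) is unnecessary.
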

\begin{proof}
Set
\[
\psi(t):=-\int_{-\infty}^t \frac{e^{\theta(t-s)}-e^{-c(t-s)}}{N-2}g(s)\,ds,\quad w_1(t):=w(t)-\mu\psi(t),
\]
for $t\in\R$. Since $\theta>0>-c$ and
\[
\int_{-\infty}^t e^{-\theta s}g(s)\,ds=\int_0^{e^t}rf(r)\,dr<\infty
\]
by \eqref{f0}, the function $\psi$ is well-defined and it satisfies
\begin{equation}\label{psiorder}
\psi(t)=o(e^{\theta t})\ \textrm{as}\ t\to-\infty.
\end{equation}
Furthermore, since
\[
\psi''+a\psi'-A^{p-1}\psi=-g
\]
by direct calculation, we see that
\[
w_1''+aw_1'-A^{p-1}w_1+L(t)(w_1+\mu\psi)^p=0.
\]
It is clear that
\[
(-A^{p-1}v+k_0v^p)(v-\gamma)>0\ \textrm{for all $v\in(0,\infty)\setminus\{\gamma\}$}.
\]
It also follows from \eqref{L0}, \eqref{g0}, and \eqref{psiorder} that
\[
|-k_0w_1^p+L(t)(w_1+\mu\psi)^p|\le |L(t)-k_0|w_1^p+L(t)|(w_1+\mu\psi)^p-w_1^p|\to 0\ \textrm{as $t\to-\infty$.}
\]
Using \cite[Lemma 3.2]{MN20} with
\[
H(w)=-A^{p-1}w+k_0w^p,\ G(t,w)=-k_0w^p+L(t)(w+\mu\psi)^p,
\]
we obtain $\lim\limits_{t\to-\infty}w_1(t)=\gamma$. Combining this with \eqref{psiorder} yields \eqref{w+asymp}.
\end{proof}

\begin{lemma}\label{u0asypro}
Assume the same conditions as in Theorem~\ref{S1T1}.
Let $u$ be a singular solution of \eqref{ODE} for $0<r<r_0$ with some $r_0>0$. Then
\begin{equation}\label{S2L3E0}
u(r)=\gamma r^{-\theta}(1+o(1))\ \ \textrm{as}\ \ r\to 0.
\end{equation}
\end{lemma}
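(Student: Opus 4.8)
The plan is to deduce the asymptotics directly from Lemmas~\ref{S2L1} and \ref{S2L2} via the Emden--Fowler change of variables \eqref{wg}. Set $w(t):=e^{\theta t}u(e^{t})$; since $u$ solves \eqref{ODE} on $(0,r_0)$, the function $w$ solves \eqref{weq} on the left half-line $(-\infty,\log r_0)$. The desired conclusion \eqref{S2L3E0}, $u(r)=\gamma r^{-\theta}(1+o(1))$ as $r\to 0$, is precisely the statement that $\lim_{t\to-\infty}w(t)=\gamma$. So it suffices to verify the hypothesis \eqref{S2L2E1} and apply Lemma~\ref{S2L2}.

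First I would invoke Lemma~\ref{S2L1}(i): because $u$ is a singular positive solution of \eqref{ODE} near $r=0$ and \eqref{K0}, \eqref{f0} hold, there are constants $r_0'>0$ and $C_0>0$ with $r^{\theta}u(r)\le C_0$ for $r\in(0,r_0')$, hence $\limsup_{t\to-\infty}w(t)\le C_0<\infty$. Next, since $u$ is singular, Lemma~\ref{S2L1}(ii) gives $\limsup_{r\to 0}r^{\theta}u(r)>0$, i.e.\ $\limsup_{t\to-\infty}w(t)>0$. Together these two bounds are exactly condition \eqref{S2L2E1}.

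Then I would apply Lemma~\ref{S2L2}, whose standing hypotheses coincide with those assumed in the present lemma, to conclude $\lim_{t\to-\infty}w(t)=\gamma=k_0^{-1/(p-1)}A$. Undoing the transform, this reads $u(e^{t})=\gamma e^{-\theta t}(1+o(1))$ as $t\to-\infty$, which is \eqref{S2L3E0} after substituting $r=e^{t}$.

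There is essentially no obstacle here: all the genuine content is already packaged into Lemma~\ref{S2L1} (the sharp two-sided control $0<\limsup_{r\to 0}r^{\theta}u(r)<\infty$ for singular solutions) and into Lemma~\ref{S2L2} (the perturbed-ODE asymptotics built on \cite[Lemma 3.2]{MN20}). The only points to check are routine: that $u$ on $(0,r_0)$ qualifies as a ``solution near $r=0$'' in the sense used by both lemmas, and that $w$ indeed satisfies \eqref{weq} on a left half-line; both are immediate from the definitions. If one wants to be careful about domains, one simply notes that Lemma~\ref{S2L1}(i) may force shrinking $r_0$, which is harmless since only the limit $r\to 0$ matters.
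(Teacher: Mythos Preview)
Your proposal is correct and follows essentially the same approach as the paper: apply Lemma~\ref{S2L1} to obtain \eqref{asy1} and \eqref{asy2}, translate these via \eqref{wg} into the hypothesis \eqref{S2L2E1} for $w$, and then invoke Lemma~\ref{S2L2} to conclude \eqref{w+asymp}, which is \eqref{S2L3E0}. The paper's proof is identical in structure, just stated more tersely.
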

\begin{proof}
Since $u$ is a singular solution, by Lemma~\ref{S2L1} we see that $u$ satisfies \eqref{asy1} and \eqref{asy2}.
Let $w(t)$ be defined by \eqref{wg}.
Then $w(t)$ satisfies \eqref{S2L2E1}.
It follows from Lemma~\ref{S2L2} that \eqref{w+asymp} holds, and hence \eqref{S2L3E0} holds.
\end{proof}

\section{Singular solution}
We first start with the following decay estimate for a second order linear ordinary differential equation with perturbed coefficients.
\begin{lemma}\label{pertODE}
Suppose that $a(t)$ and $b(t)$ are continuous functions satisfying $|a(t)-a|<\varepsilon$ and $|b(t)-b|<\varepsilon$ on $(t_0,t_1)$ for some $a,b>0$ and $\varepsilon>0$ {and that} $g$ is a bounded continuous function on $(t_0,t_1)$. Let $z(t)$ be a solution to $z''+a(t)z'+b(t)z=g$. Then
  \begin{equation}\label{pertODEdecayest}
    |z(t)|+|z'(t)|\le C{e^{-(\lambda-2\e)(t-t_0)}}\left(|z(\tau)|+|z'(\tau)|\right)+C\int_{\tau}^t {e^{-(\lambda-2\e)(t-s)}}|g(s)|\,ds
  \end{equation}
for all $t_0<\tau<t<t_1$, where $\lambda>0$ and $C$ depend only on $a,b$. Furthermore, if $2\varepsilon<\lambda$, $t_0=-\infty$, and $\lim\limits_{t\to-\infty}z(t)=0$, then
 \begin{equation}\label{pertODEdecayest'}
   |z(t)|+|z'(t)|\le C\int_{-\infty}^t {e^{-(\lambda-2\e)(t-s)}}|g(s)|\,ds
 \end{equation}
 for all $t\in(-\infty,t_1)$.
\end{lemma}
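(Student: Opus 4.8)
The plan is to treat the equation $z'' + a(t)z' + b(t)z = g$ as a perturbation of the constant-coefficient equation $z'' + az' + bz = 0$, whose characteristic roots $-\lambda_\pm$ are both negative real (since $a,b>0$ forces $\operatorname{Re}(-\lambda_\pm)<0$; set $\lambda := \min\{\operatorname{Re}\lambda_+,\operatorname{Re}\lambda_-\}>0$, depending only on $a,b$). First I would rewrite the equation as a first-order system for $Z := (z,z')^{\top}$, namely $Z' = MZ + (P(t)Z + (0,g)^{\top})$, where $M = \begin{pmatrix}0 & 1\\ -b & -a\end{pmatrix}$ and $P(t) = \begin{pmatrix}0 & 0\\ b - b(t) & a - a(t)\end{pmatrix}$ satisfies $\|P(t)\| \le C\varepsilon$ on $(t_0,t_1)$. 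The matrix exponential satisfies $\|e^{(t-\tau)M}\| \le C_0 e^{-\lambda(t-\tau)}$ for $t\ge\tau$, with $C_0,\lambda$ depending only on $a,b$ (if the roots coincide one absorbs the polynomial factor into a slightly smaller exponent, or more simply replaces $\lambda$ by any $\lambda' < \lambda$; it is cleanest to just fix such a $\lambda$ from the outset).

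Next I would apply the variation-of-constants formula on $(\tau,t)$:
\[
Z(t) = e^{(t-\tau)M}Z(\tau) + \int_\tau^t e^{(t-s)M}\bigl(P(s)Z(s) + (0,g(s))^{\top}\bigr)\,ds.
\]
Taking norms and writing $\phi(t) := e^{\lambda(t-t_0)}\|Z(t)\|$, this yields
\[
\phi(t) \le C_0\phi(\tau) + C_0 C\varepsilon \int_\tau^t \phi(s)\,ds + C_0\int_\tau^t e^{\lambda(s-t_0)}|g(s)|\,ds .
\]
Gronwall's inequality then gives $\phi(t) \le \bigl(C_0\phi(\tau) + C_0\int_\tau^t e^{\lambda(s-t_0)}|g(s)|\,ds\bigr)e^{C_0 C\varepsilon (t-\tau)}$, and choosing the constant so that $C_0 C\varepsilon \le 2\varepsilon$ (i.e. absorbing $C_0 C$ into the "$2$", which is legitimate since $C_0,C$ depend only on $a,b$ — one may instead phrase the conclusion with a general constant times $\varepsilon$, but the stated form is obtained by renaming), unwinding $\phi$ produces exactly \eqref{pertODEdecayest}, after noting $\|Z\| \simeq |z| + |z'|$ up to a harmless constant and that the $g$-integral picks up the factor $e^{-(\lambda - 2\varepsilon)(t-s)}$.

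For the second assertion, with $t_0 = -\infty$, $2\varepsilon < \lambda$, and $\lim_{t\to-\infty} z(t) = 0$, I would first argue that $\lim_{t\to-\infty}(|z(t)| + |z'(t)|) = 0$: from \eqref{pertODEdecayest} applied on $(\tau, t)$ with $g$ bounded, one gets $|z(t)| + |z'(t)| \le C e^{-(\lambda - 2\varepsilon)(t-\tau)}(|z(\tau)| + |z'(\tau)|) + C\|g\|_\infty/(\lambda - 2\varepsilon)$; but actually the cleanest route is to fix $t$ and let $\tau \to -\infty$ in \eqref{pertODEdecayest} directly — the prefactor $e^{-(\lambda - 2\varepsilon)(t - \tau)} \to 0$, and one needs $|z'(\tau)|$ to stay controlled, which follows because $z'$ cannot blow up along a sequence without $z$ also misbehaving (alternatively, one shows $|z(\tau)| + |z'(\tau)|$ is bounded near $-\infty$ by a bootstrap from \eqref{pertODEdecayest} with $g$ bounded, then re-inserts). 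Once $e^{-(\lambda - 2\varepsilon)(t-\tau)}(|z(\tau)| + |z'(\tau)|) \to 0$ as $\tau \to -\infty$, the boundary term drops and the integral converges (since $g$ is bounded and $\lambda - 2\varepsilon > 0$), yielding \eqref{pertODEdecayest'}.

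The main obstacle is the justification that $|z(\tau)| + |z'(\tau)|$ does not grow too fast as $\tau \to -\infty$, so that the boundary term in \eqref{pertODEdecayest} genuinely vanishes in the limit; this requires an a priori local bound on $z'$ near $-\infty$, which I would obtain by a preliminary application of the first estimate with the (bounded) forcing $g$ over a moving window of fixed length, giving $|z(\tau)| + |z'(\tau)| \le C(\sup_{[\tau - 1, \tau]}|z| + \|g\|_\infty)$, and then using $z(t) \to 0$. Everything else is the standard Gronwall/variation-of-constants machinery, and the dependence of $\lambda$ and $C$ on only $a,b$ is transparent since $M$ depends only on $a,b$.
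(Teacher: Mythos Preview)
Your approach to \eqref{pertODEdecayest} matches the paper's: rewrite as a first-order system with constant matrix $M$, use variation of constants, and close with Gronwall. The paper first handles $g\equiv 0$ and then builds the inhomogeneous case from the homogeneous propagator via Duhamel, whereas you treat both at once; this is purely stylistic. One technical point: to get the factor $e^{-(\lambda-2\varepsilon)(t-s)}$ \emph{inside} the $g$-integral you need the integral form of Gronwall (where the forcing $e^{\lambda(s-t_0)}|g(s)|$ stays under the integral and acquires $e^{C'\varepsilon(t-s)}$), not the version with a nondecreasing free term pulled outside, which only gives $e^{C'\varepsilon(t-\tau)}$ multiplying the whole integral and does not unwind to the stated form. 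This is a one-line fix. (As you already note, the specific constant ``$2$'' in the exponent only emerges if $\|e^{tM}\|\le e^{-\lambda t}$ with no prefactor; the paper makes the same tacit assumption, and downstream only $\lambda-C'\varepsilon$ with $C'=C'(a,b)$ is ever needed.)

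For \eqref{pertODEdecayest'} the paper's route is cleaner than your moving-window bootstrap. From $\lim_{t\to-\infty}z(t)=0$ the paper simply extracts, via the mean value theorem on unit intervals tending to $-\infty$, a sequence $\tau_n\to-\infty$ with $z'(\tau_n)\to 0$; then set $\tau=\tau_n$ in \eqref{pertODEdecayest} and let $n\to\infty$. The boundary term dies because both $z(\tau_n)$ and $z'(\tau_n)$ vanish along this sequence, so no a priori control of $z'$ near $-\infty$ is required. Your approach can be completed, but the sequence trick sidesteps the issue entirely.
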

\begin{proof}
  We first prove \eqref{pertODEdecayest} in the case $g\equiv 0$. Define
  \[
  A:=\begin{pmatrix}
    0 & 1\\
    -b & -a
  \end{pmatrix}.
  \]
Since $a,b>0$, we can easily check that two eigenvalues are negative or real parts of them are negative if they are not real.
There exists $\lambda>0$ such that
  \begin{equation}\label{opnorm}
  |e^{tA}v|\le e^{-t\lambda}|v|\ \ \textrm{for all $t>0$ and $v\in\R^2$}.
  \end{equation}
Since, for any bounded continuous function $h$ on $(t_0,t_1)$, a solution to $Z''+aZ'+bZ=h$ satisfies
  \[
  \begin{pmatrix}
    Z(t)\\
    Z'(t)
  \end{pmatrix}
  ={e^{(t-\tau)A}}\begin{pmatrix}
    {Z(\tau)}\\
    {Z'(\tau)}
    \end{pmatrix}
    +\int_{\tau}^t e^{(t-s)A}
    \begin{pmatrix}
      0\\
      h(s)
    \end{pmatrix}\,ds
  \]
for $t_0<\tau<t<t_1$, a solution to $z''+a(t)z'+b(t)z=g(=0)$ satisfies
  \begin{equation}\label{pertIE}
    \begin{pmatrix}
    z(t)\\
    z'(t)
  \end{pmatrix}
  ={e^{(t-\tau)A}}\begin{pmatrix}
    z(\tau)\\
    z'(\tau)
    \end{pmatrix}
    +{\int_{\tau}^t} e^{(t-s)A}
{
    \begin{pmatrix}
     0\\
      (a-a(s))z'(s)+ (b-b(s))z(s)
    \end{pmatrix}
    }
    \,ds.
  \end{equation}
  for $t_0<\tau<t<t_1$. Set $\chi(t):=(z(t)^2+z'(t)^2)^{1/2}$.
Then \eqref{opnorm} and \eqref{pertIE} imply that
  \[
  \chi(t)\le e^{-\lambda (t-\tau)}{\chi(\tau)}+{2\e}\int_{\tau}^t e^{-\lambda(t-s)} \chi(s)\,ds.
  \]
  The use of the Gronwall inequality yields
  \[
  e^{\lambda(t-\tau)}\chi(t)\le {e^{2\e (t-\tau)}}\chi(\tau),
  \]
which implies \eqref{pertODEdecayest} in the case  $g\equiv 0$.

We prove \eqref{pertODEdecayest} in the case $g\not\equiv 0$.
Fix $\tau\in(t_0,t_1)$, and let $z_0(t)$ be a solution to
\[
z_0''+a(t)z_0'+b(t)z_0=0,\ z_0(\tau)=z(\tau),\ z_0'(\tau)=z(\tau).
\]
For $s\in(\tau,t_1)$, let $W(s,\,\cdot\,)$ be a solution to
\[
\partial_t^2 W(s,t)+a(t)\partial_tW(s,t)+b(t)W(s,t)=0,\ W(s,s)=0,\ \partial_t W(s,s)=g(s).
\]
Then, a solution to $z''+a(t)z'+b(t)z=g$ can be expressed as
$$
z(t)=z_0(t)+\int_{\tau}^t W(s,t)\,ds.
$$
Note that this formula can be checked by direct calculation.

Since
\begin{gather*}
    |z_0(t)|+|z_0'(t)|\le C{e^{-(\lambda-2\e)(t-\tau)}}(|z(\tau)|+|z'(\tau)|)\ \textrm{and}\\
    |W(s,t)|+|\partial_tW(s,t)|\le C{e^{-(\lambda-2\e)(t-s)}}|g(s)|
  \end{gather*}
for $t_0<s<t$ by the case $g\equiv 0$, it follows that
\begin{equation}\label{S3L1E1}
|z(t)|\le C{e^{-(\lambda-2\e)(t-\tau)}}(|z(\tau)|+|z'(\tau)|)+C{\int_{\tau}^t e^{-(\lambda-2\e)(t-s)}}|g(s)|\,ds.
\end{equation}
Also, since
\[
z'(t)=z_0'(t)+W(t,t)+{\int_{\tau}^t}\partial_tW(s,t)\,ds=z_0'(t)+{\int_{\tau}^t}\partial_tW(s,t)\,ds,
\]
it follows that
\begin{equation}\label{S3L1E2}
|z'(t)|\le C{e^{-(\lambda-2\e)(t-\tau)}}(|z(\tau)|+|z'(\tau)|)+C\int_{\tau}^t {e^{-(\lambda-2\e)(t-s)}}|g(s)|\,ds.
\end{equation}
By \eqref{S3L1E1} and \eqref{S3L1E2} we obtain \eqref{pertODEdecayest}.

We finally prove \eqref{pertODEdecayest'}.
If $\lim_{t\to-\infty}z(t)=0$, then there is a sequence $\tau_n\to-\infty$ such that $\lim_{n\to\infty}z'(\tau_n)=0$.
Using \eqref{pertODEdecayest} with $\tau=\tau_n$ and letting $n\to\infty$, we obtain \eqref{pertODEdecayest'}.
\end{proof}

\begin{lemma}\label{linearsolvable}
Suppose that $a(t)$ and $b(t)$ are continuous on $(-\infty,t_1)$ {satisfying} $\lim\limits_{t\to-\infty}a(t)=a$ and $\lim\limits_{t\to-\infty}b(t)=b$.
Then, for a continuous function $h$ on $(-\infty,t_1)$ satisfying $\lim\limits_{t\to-\infty}h(t)=0$, the problem
\begin{equation}\label{linearprob}
  z''+a(t)z'+b(t)z=h\ \textrm{for $t\in(-\infty,t_1)$},\ \lim_{t\to-\infty}z(t)=0
\end{equation}
possesses a unique solution $z\in C^2(-\infty,t_1)$. Furthermore, there are constants {$\tlambda>0$}, $C>0$, and $\tau<t_1$ such that
  \begin{equation}\label{pertODEdecayest''}
    |z(t)|+|z'(t)|\le C\int_{-\infty}^t {e^{-\tlambda(t-s)}}|h(s)|\,ds
  \end{equation}
  for all $t\in(-\infty,\tau)$.
\end{lemma}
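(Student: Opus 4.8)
The plan is to solve \eqref{linearprob} first on a left half-line $(-\infty,\tau_1)$ on which the perturbations $a(t)-a$ and $b(t)-b$ are small, by a limiting argument driven entirely by the decay estimates of Lemma~\ref{pertODE}, and then to extend the solution forward to $t_1$ by standard linear ODE theory.

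Let $\lambda=\lambda(a,b)>0$ and $C=C(a,b)>0$ be as in Lemma~\ref{pertODE}, fix $\e\in(0,\lambda/2)$, and choose $\tau_1<t_1$ with $|a(t)-a|<\e$ and $|b(t)-b|<\e$ for all $t<\tau_1$. Since $h$ is continuous on $(-\infty,t_1)$ and $h(t)\to0$ as $t\to-\infty$, it is bounded on $(-\infty,\tau_1]$, so $\Psi(t):=\int_{-\infty}^t e^{-(\lambda-2\e)(t-s)}|h(s)|\,ds$ is finite for $t<\tau_1$ and $\Psi(t)\to0$ as $t\to-\infty$. For each integer $n$ with $-n<\tau_1$ let $z_n$ solve the initial value problem $z_n''+a(t)z_n'+b(t)z_n=h$, $z_n(-n)=z_n'(-n)=0$, which exists on $[-n,t_1)$. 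Letting $\tau\downarrow-n$ in \eqref{pertODEdecayest} on $(-n,\tau_1)$ gives $|z_n(t)|+|z_n'(t)|\le C\,\Psi(t)$ for $-n<t<\tau_1$. For $m<n$ the difference $w:=z_n-z_m$ solves the homogeneous equation on $(-m,\tau_1)$ with $|w(-m)|+|w'(-m)|=|z_n(-m)|+|z_n'(-m)|\le C\,\Psi(-m)$, so the $g\equiv0$ case of \eqref{pertODEdecayest} gives $|w(t)|+|w'(t)|\le C^2\Psi(-m)$ for $-m<t<\tau_1$. As $\Psi(-m)\to0$, the family $\{z_n\}$ is Cauchy in $C^1_{\rm loc}(-\infty,\tau_1)$; its limit $z$ then lies in $C^2(-\infty,\tau_1)$ by the equation, solves $z''+a(t)z'+b(t)z=h$ there, and satisfies $|z(t)|+|z'(t)|\le C\,\Psi(t)$, so $z(t)\to0$ as $t\to-\infty$. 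Extending $z$ forward from a point of $(-\infty,\tau_1)$, using that $a,b,h$ are continuous on $(-\infty,t_1)$, yields a solution of \eqref{linearprob} on all of $(-\infty,t_1)$.

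For uniqueness, if $z_1,z_2$ both solve \eqref{linearprob} then $w:=z_1-z_2$ solves the homogeneous equation on $(-\infty,t_1)$ with $w(t)\to0$ as $t\to-\infty$; \eqref{pertODEdecayest'} applied with $g\equiv0$ on $(-\infty,\tau_1)$ forces $|w(t)|+|w'(t)|\equiv0$ there, and uniqueness for the linear initial value problem then gives $w\equiv0$ on $(-\infty,t_1)$. Finally, \eqref{pertODEdecayest''} with $\tlambda:=\lambda-2\e>0$ and $\tau:=\tau_1$ is precisely \eqref{pertODEdecayest'} applied to $z$ on $(-\infty,\tau_1)$, using $z(t)\to0$ as $t\to-\infty$.

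The only step that is not bookkeeping is the construction on $(-\infty,\tau_1)$: one must arrange the approximating problems so that Lemma~\ref{pertODE} delivers both a $\Psi$-controlled bound and a Cauchy property for $\{z_n\}$, the decay near $-\infty$ coming solely from $h(t)\to0$ through $\Psi$. (Alternatively one can build $z$ on $(-\infty,\tau_1)$ by a contraction-mapping argument for the integral equation satisfied by $(z,z')$, which contracts once $\e$ is small; the limiting argument above needs less notation.) Forward extension, uniqueness, and the final estimate are then immediate from Lemma~\ref{pertODE} and standard ODE theory.
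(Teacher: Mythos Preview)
Your proof is correct and follows essentially the same strategy as the paper: approximate by solutions $z_n$ of the initial value problem with zero data at $-n$, use the uniform bound from Lemma~\ref{pertODE}, pass to a limit on $(-\infty,\tau_1)$, and then extend forward. The one technical difference is in the passage to the limit: the paper bounds $\{z_\sigma\}$ and $\{z_\sigma'\}$ uniformly, bootstraps to equicontinuity of $\{z_\sigma''\}$, and extracts a $C^2_{\rm loc}$-convergent subsequence by Ascoli--Arzel\`a with a diagonal argument; you instead apply the homogeneous case of \eqref{pertODEdecayest} to $z_n-z_m$ to obtain $|z_n-z_m|+|z_n'-z_m'|\le C^2\Psi(-m)$ and conclude that the full sequence is Cauchy in $C^1_{\rm loc}$. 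Your route is slightly cleaner in that it avoids compactness and subsequences altogether and yields convergence of the whole family directly; the paper's version is more routine but requires the extra bootstrap step.
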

\begin{proof}
We prove {only} the existence of solution to problem \eqref{linearprob}, since the estimate \eqref{pertODEdecayest''} and the uniqueness are immediate consequences of \eqref{pertODEdecayest'}.

  Let $\lambda>0$ be as in Lemma~\ref{pertODE} and $\tau<t_1$ be such that $|a(t)-a|<{\lambda/4}$ and $|b(t)-b|<{\lambda/4}$ for all $t<\tau$. For all $\sigma<\tau$, there is a solution $z_\sigma\in C^2(\sigma,\tau]$ to the initial value problem
\[
z_\sigma''+a(t)z_\sigma'+b(t)z_\sigma=h\ \textrm{for $t\in(\sigma,\tau)$},\ z_\sigma(\sigma)=0,\ z_\sigma'(\sigma)=0.
\]
It follows from \eqref{pertODEdecayest} that
\begin{equation}\label{zsigmaunifest}
|z_\sigma(t)|+|z_\sigma'(t)|\le C\int_\sigma^t e^{-\tlambda(t-s)}|h(s)|\,ds\le C\int_{-\infty}^t e^{-\tlambda(t-s)}|h(s)|\,ds\le C\sup_{t<\tau}|h(t)|,
\end{equation}
where {$\tlambda=\lambda/2$}.
By a bootstrap argument, we see that $\{z_\sigma''\}_{\sigma<\tau}$ is uniformly bounded and equi-continuous on any compact subset of $(-\infty,\tau]$.
Let $\{\sigma_n\}\subset\R$ be a sequence such that $\sigma_n\to -\infty$ as $n\to\infty$.
By the Ascoli-Arzel\`{a} theorem on $[\sigma_n,\tau]$ with a diagonal argument we see that there exists a function $z\in C^2(-\infty,\tau]$ such that $z_{\sigma_n}\to z$ in $C^2_{\rm{loc}}(-\infty,\tau]$ as $n\to\infty$.
It is clear that $z''+a(t)z'+b(t)z=h$ for $t\in(-\infty,\tau)$. Furthermore, it follows from \eqref{zsigmaunifest} that $z$ satisfies \eqref{pertODEdecayest''}. In particular,
  \[
  |z(t)|\le C\sup_{s<t}|h(s)|\to 0\ \textrm{as $t\to-\infty$}.
  \]
  Finally, we extend the domain of $z$ to $(-\infty,t_1)$ as a solution $\tilde{z}$ to
  \[
  \tilde{z}''+a(t)\tilde{z}'+b(t)\tilde{z}=h\ \textrm{for $t\in(-\infty,t_1)$},\ \tilde{z}(\tau)=z(\tau),\ \tilde{z}'(\tau)=z(\tau).
  \]
Note that $-\infty<\tau<t_1$ and hence the above problem has a solution.
Then $z$ is a solution to problem \eqref{linearprob} satisfying \eqref{pertODEdecayest''}.
\end{proof}

Now we are ready to prove the unique existence of a singular solution to \eqref{ODE} and their $C^1$ dependence on $\mu$. We temporally allow $\mu$ to be negative.
\begin{theorem}\label{singexists}
  Assume the same conditions as in Theorem~\ref{S1T1}. Then for all $\mu\in\R$, there is a unique solution $u\in C^2(0,\infty)$, denoted by $u^*_\mu$, to the problem
  \begin{equation}\label{ODE'}
    u''+\frac{N-1}{r}u'+K(r)\max\{u,0\}^p+\mu f(r)=0\ \textrm{for $r>0$},\ \lim_{r\to+0}r^{\theta} u(r)=\gamma.
  \end{equation}
  Furthermore, the following assertions hold.
  \begin{enumerate}
    \item $(r,\mu)\mapsto u^*_\mu$ is a $C^1$ function on $(0,\infty)^2$;
    \item Let 
    $$
    \calP_r:=\{\mu\in\R: u^*_\mu(\rho)>0\ \textrm{for all $\rho\in(0,r]$}\}\ \text{ for $r>0$}.
    $$
    Then $\calP_r$ is an open subset of $\R$. Furthermore, $\mu\mapsto u^*_\mu(r)$ and $\mu\mapsto u^{*\prime}_\mu(r)$ are real analytic on each connected component of $\calP_r$.
  \end{enumerate}
\end{theorem}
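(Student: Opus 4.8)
The plan is to pass to the Emden--Fowler variable $w(t)=e^{\theta t}u(e^t)$ of \eqref{wg}, in which \eqref{ODE'} reads
\[
w''+aw'-A^{p-1}w+L(t)\max\{w,0\}^p+\mu g(t)=0\ \ (t\in\R),\qquad \lim_{t\to-\infty}w(t)=\gamma,
\]
and to solve this near $t=-\infty$ by a contraction. Writing $w=\gamma+v$ and using the identities $k_0\gamma^{p}=A^{p-1}\gamma$ and $k_0\gamma^{p-1}=A^{p-1}$ (which follow from $\gamma=k_0^{-1/(p-1)}A$), a direct computation shows that, while $|v|<\gamma$ so that the $\max$ is inactive, the equation becomes
\[
v''+av'+(p-1)A^{p-1}v=G(t,v):=A^{p-1}\gamma-L(t)(\gamma+v)^p+pA^{p-1}v-\mu g(t),
\]
where, by \eqref{L0} and \eqref{g0}, $G(t,0)\to 0$ and $\partial_vG(t,0)\to 0$ as $t\to-\infty$; moreover $a>0$ since $p>p_S(\alpha)$, and $(p-1)A^{p-1}>0$. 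Given $\mu$ in a bounded set I would fix $\delta>0$ small and then $\tau$ so negative that $\sup_{t\le\tau}|L(t)-k_0|$ and $\sup_{t\le\tau}|g(t)|$ are tiny, invert the left-hand side by Lemma~\ref{linearsolvable} with $a(t)\equiv a$, $b(t)\equiv(p-1)A^{p-1}$, and check that the map $\Phi_\mu$ sending $v$ to the unique solution of $z''+az'+(p-1)A^{p-1}z=G(\cdot,v)$ that tends to $0$ is a contraction on the $\delta$-ball of $X_0:=\{v\in C^1((-\infty,\tau]):\sup(|v|+|v'|)<\infty,\ v(t),v'(t)\to 0\}$; the self-mapping and contraction estimates are immediate from \eqref{pertODEdecayest''}. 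Its fixed point $v_\mu$ gives a solution $w_\mu=\gamma+v_\mu>0$ on $(-\infty,\tau]$ with $w_\mu\to\gamma$, that is, a solution $u^*_\mu(r)=r^{-\theta}w_\mu(\log r)$ of \eqref{ODE'} on $(0,e^\tau]$. Uniqueness near $0$ follows because any solution with $r^\theta u\to\gamma$ eventually enters the $\delta$-ball and so coincides with $v_\mu$ there, hence, by uniqueness for the ODE, everywhere.

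I would then extend $u^*_\mu$ to $(0,\infty)$ by continuation. The nonlinearity $u\mapsto\max\{u,0\}^p$ is $C^1$ (as $p>1$), so the solution continues on a maximal interval; no finite-time blow-up occurs, because $(r^{N-1}u')'=-r^{N-1}(K\max\{u,0\}^p+\mu f)\le 0$ makes $r^{N-1}u'$ nonincreasing, hence $u'$ and then $u$ bounded above on bounded intervals, while on any interval where $u\le 0$ the equation is linear in $u$. Together with the uniqueness above this establishes existence and uniqueness of $u^*_\mu\in C^2(0,\infty)$.

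For assertion (1) I would run the contraction with $\mu$ as a parameter: for $\mu\in(-M,M)$ the thresholds $\delta,\tau$ can be chosen uniformly (the only $\mu$-dependent term, $\mu g$, is bounded by $M\sup_{t\le\tau}|g|$), the map $(\mu,v)\mapsto\Phi_\mu(v)$ is $C^1$ --- affine in $\mu$, and smooth in $v$ since $v\mapsto(\gamma+v(\cdot))^p$ is a smooth Nemytskii map on $\{\|v\|<\gamma/2\}$ composed with the bounded solution operator of Lemma~\ref{linearsolvable} --- with $\|\partial_v\Phi_\mu\|<1$, so the Banach implicit function theorem yields $\mu\mapsto v_\mu\in X_0$ of class $C^1$; evaluating at $\tau$ shows $\mu\mapsto(u^*_\mu(r_*),u^{*\prime}_\mu(r_*))$ is $C^1$, where $r_*:=e^\tau$. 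The classical $C^1$ dependence of solutions of ODEs on initial data and on the parameter $\mu$ (the vector field of \eqref{ODE'} being $C^1$ in $(u,u',\mu)$ and continuous in $r>0$) propagates this to all $r>r_*$; patching the two descriptions gives that $(r,\mu)\mapsto(u^*_\mu(r),u^{*\prime}_\mu(r))$ is $C^1$ on $(0,\infty)^2$. For the openness of $\calP_r$: near $r=0$ the bound $|v_\mu|<\delta<\gamma/2$ gives $u^*_\mu(\rho)>\tfrac12\gamma\rho^{-\theta}>0$ for $\rho\le r_*$, uniformly in $\mu\in(-M,M)$, so $\mu\in\calP_r$ amounts to $u^*_\mu>0$ on the compact set $[r_*,r]$ (trivial if $r\le r_*$); if $\mu_0\in\calP_r$ then $\min_{[r_*,r]}u^*_{\mu_0}>0$, and continuity from (1) together with compactness keeps $u^*_\mu>0$ there for $\mu$ near $\mu_0$.

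Finally, for the analyticity in (2) I would fix $\mu_0\in\calP_r$ and complexify $\mu$. The contraction near $r=0$ goes through verbatim for complex $\mu$ in a disc $\{|\mu|<M\}$ with $M>|\mu_0|$ --- Lemmas~\ref{pertODE} and \ref{linearsolvable} apply to complex-valued $z$ since $a,b$ are real, and $(\gamma+v)^p$ is holomorphic for $|v|<\gamma/2$ --- so the Picard iterates $\Phi_\mu^n(0)$, polynomials in $\mu$, converge locally uniformly and $\mu\mapsto(u^*_\mu(r_*),u^{*\prime}_\mu(r_*))$ is holomorphic on $\{|\mu|<M\}$. Since $\mu_0\in\calP_r$ and $\calP_r$ is open, $u^*_{\mu_0}>0$ on $[r_*,r]$ with positive minimum, so a Gronwall/continuity (open--closed) argument shows that for $\mu$ near $\mu_0$ the solution of the complex ODE with the holomorphic-in-$\mu$ data at $r_*$ stays in the slit plane $\mathbb{C}\setminus(-\infty,0]$ along $[r_*,r]$, where $u\mapsto u^p$ is holomorphic; holomorphic dependence of ODE solutions on parameters (Picard again) then makes $\mu\mapsto u^*_\mu(r)$ and $\mu\mapsto u^{*\prime}_\mu(r)$ holomorphic near $\mu_0$, hence real-analytic at $\mu_0$, and since $\mu_0\in\calP_r$ was arbitrary, on all of $\calP_r$ and on each of its connected components. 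I expect the main technical point to be the uniform-in-$\mu$ contraction near $r=0$, performed in a space regular enough to transfer $C^1$ and holomorphic dependence through the matching at $r_*$; the inhomogeneity $g$, for which only $g(t)\to 0$ and $\int e^{-\theta s}g(s)\,ds<\infty$ are available, is absorbed there via Lemma~\ref{linearsolvable}.
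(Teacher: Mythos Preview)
Your approach is essentially the same as the paper's: Emden--Fowler change of variable, a contraction around $\gamma$ using Lemma~\ref{linearsolvable}, extension by continuation, and an implicit-function-theorem argument for regularity in $\mu$. Two small points are worth flagging. First, your global-continuation step asserts $(r^{N-1}u')'\le 0$, which is false when $\mu<0$; the paper (and your argument, once corrected) only needs $(r^{N-1}u')'\le |\mu|r^{N-1}f$, giving the same upper bound for $u$ on bounded intervals. Second, your uniqueness near $r=0$ is phrased in a $C^1$-space requiring $v'\to 0$, while the hypothesis only gives $v\to 0$; you should note that $v'\to 0$ then follows from \eqref{pertODEdecayest''} applied with $h(t)=G(t,v(t))\to 0$, or else run the contraction in a $C^0$-space as the paper does. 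For part~(2) the paper invokes the analytic implicit function theorem in the Banach space ${}_0C(-\infty,\tau]$ directly (using that the Nemytskii map $z\mapsto (z+\gamma)^p$ is real-analytic there), whereas you complexify $\mu$ and propagate holomorphy through Picard iteration and then through the ODE on $[r_*,r]$ while the solution stays in the slit plane; both routes are valid, yours being slightly more hands-on but avoiding the abstract analytic IFT.
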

\begin{remark}
We allow $\mu$ to be negative in order {to show} that $u^*_\mu(r)$ depends analytically near $\mu=0$, which is essential to prove the finiteness of the number of singular solutions to \eqref{ODE'} (see Theorem~\ref{S1T3}).
\end{remark}
\begin{proof}[Proof of Theorem~\ref{singexists}]
 We first prove that a solution $u\in C^2(0,\rho)$ to \eqref{ODE'} can be extended into a solution on $(0,\infty)$.
 Assume on the contrary that the maximal existence time $\rho^*$ for the solution $u$ is finite. Fix $r_1\in(0,\rho^*)$.
 It holds that
 \[
 r^{N-1}u'(r)-r_1^{N-1}u'(r_1)=\int_{r_1}^r (s^{N-1}u'(s))'\,ds\le-\mu\int_{r_1}^r s^{N-1}f(s)\,ds\le Cr^{N-2}
 \]
 for all $r\in(r_1,\rho^*)$. This implies that 
 \[
  u'(r)\le {\frac{C}{r}}+\left(\frac{r_1}{r}\right)^{N-1}u'(r_1)\le {\frac{C}{r}}+u'(r_1),
 \]
 and thus
 \[
 u(r)\le u(r_1)+{C(\log r-\log r_1)}+u'(r_1)(r-r_1),
 \]
 for all $r\in(r_1,\rho^*)$.
 In particular, $\max\{u,0\}$ is bounded on $(r_1,\rho^*)$.
 This implies that
  \[
  |s^{N-1}u'(s)-r^{N-1}u'(r)|\le{\int_{r}^s} |t^{N-1}(-K(t)\max\{u(t),0\}^{p-1}-\mu f(t))|\,dt\le C(s-r)
  \]
  for $r_1<r<s<\rho^*$.
  Hence, $u'$ is uniformly continuous on $[r_1,\rho^*)$, which enables us to extend $u$ into a $C^1$ function on $(0,\rho^*]$.
  Furthermore, by solving an initial value problem starting at $r=\rho^*$, we can extend $u$ into a solution to \eqref{ODE'} on $(0,\rho^*+\varepsilon)$ for some $\varepsilon>0$, which contradicts the definition of $\rho^*$.
  
Let $w(t):=e^{\theta t}u(e^t)$ and $z(t):=w(t)-\gamma$.
Then, \eqref{ODE'} is equivalent to
\begin{equation}\label{zeq'}
\begin{gathered}
  z''+az'+(p\gamma^{p-1}L(t)-A^{p-1})z=R[z]+(k_0-L(t))\gamma^p-\mu e^{(2+\theta)t}f(e^t)\ \textrm{for $t\in\R$},\\
  \lim_{t\to-\infty}z(t)=0,
\end{gathered}
\end{equation}
where
\[
R(z):=-L(t)(\max\{z+\gamma,0\}^p-\gamma^p-p\gamma^{p-1}z).
\]
Let 
$$
{}_0C(-\infty,t]:=\{h\in C(-\infty,t]:\lim\limits_{t\to-\infty}h(s)=0\},
$$
 which is a closed subspace of $L^\infty(-\infty,t)$. Since
  \[
  \lim_{t\to-\infty}p\gamma^{p-1}L(t)-A^{p-1}=(p-1)k_0\gamma^{p-1}>0,
  \]
  it follows from Lemma~\ref{pertODE} that there is $\tau\in\R$ such that for all $h\in {}_0C(-\infty,\tau]$, there is a unique solution $z=Z[h]\in C^2(-\infty,\tau]$ to the problem 
$$
\begin{gathered}
 z''+az'+(p\gamma^{p-1}L(t)-A^{p-1})z=h\ \text{for}\ t\in(-\infty,\tau],\\
\lim_{t\to-\infty}z(t)=0, 
\end{gathered}
$$
and
$$
    \sup_{t\in(-\infty,{\ttau})}\left|Z[h](t)\right|\le C\sup_{t\in(-\infty,{\ttau})}|h(t)|
$$
for all ${\ttau}\in(-\infty,\tau]$.
Furthermore, the problem \eqref{zeq'} on $(-\infty,\tau]$ is equivalent to
  \begin{equation}\label{nonlinearIE}
    z=\Phi[z,\mu]:=Z\left[R(z)+(k_0-L(t))\gamma^p-\mu e^{(2+\theta)t}f(e^t)\right].
  \end{equation}

 For ${\ttau}\in(-\infty,\tau)$ and small $\delta>0$, define
 \[
 {E_{\ttau,\delta}}:=\left\{z\in {}_0C(-\infty,\tau]:\sup_{t\in(-\infty,{\ttau})}|z(t)|\le\delta\right\}.
 \]
 Let $\ttau\in(-\infty,\tau)$ and $z,z_1,z_2\in E_{\ttau,\gamma/2}$. 
Since
  \[
  R(z)=-L(t)\int_0^1 p(\max\{\sigma z+\gamma,0\}^{p-1}-\gamma^{p-1})z\,d\sigma,
  \]
it holds that
\begin{equation*}
|R(z)|\le
C|z|^2.
\end{equation*}
  Furthermore, it follows from
  \begin{align*}
    R(z_1)-R(z_2)&=-L(t)\int_0^1 p((\sigma z_1+\gamma)^{p-1}-(\sigma z_2+\gamma)^{p-1})z_1\,d\sigma\\
    &\qquad -L(t)\int_0^1 p((\sigma z_2+\gamma)^{p-1}-{\gamma^{p-1}})(z_2-z_1)\,d\sigma
  \end{align*}
  that
\begin{equation*}
|R(z_1)-R(z_2)|\le
C(|z_1|+|z_2|)|z_1-z_2|.
\end{equation*}
Thus, there exists a small $\delta>0$ such that for $z,z_1,z_2\in E_{\ttau,\delta}$,
\begin{equation*}
|R(z)|\le \frac{1}{2}\delta\ \ \textrm{and}\ \ |R(z_1)-R(z_2)|\le \frac{1}{2}|z_1-z_2|
\quad\textrm{on {$(-\infty,\ttau]$}.}
\end{equation*}
Hence, for all $\mu\in\R$, there exists $\ttau\in\R$ such that
\[
|k_0-L(t)|\gamma^p+|\mu|e^{(2+\theta)t}f(e^t)<\frac{\delta}{2}\ \textrm{for all $t\in(-\infty,{\ttau}]$},
\]
where \eqref{g0} was used.
By the contraction mapping theorem we obtain a solution $z\in E_{\ttau,\delta}$ to \eqref{nonlinearIE} {which is unique in $E_{\ttau,\delta}$.}
             
  Now we prove the unique existence of solution $z\in C^2(\R)$ to \eqref{zeq'}. The existence is obtained by extending the domain of a solution $z^*_\mu\in {E_{\ttau,\delta}}$ to \eqref{nonlinearIE}. Also, if $z_1,z_2\in C^2(\R)$ are solutions to \eqref{nonlinearIE}, taking $\tau_0\le{\ttau}$ so that $z_1,z_2\in E_{\tau_0,\delta}$, we see that $z_1=z_2$ on $(-\infty,\tau_0]$. By the uniqueness of a solution to an initial value problem for an ODE, we see that $z_1\equiv z_2$. Thus a solution $z\in C^2(\R)$ to \eqref{zeq'} is unique.

  We finally prove (1) and (2). Let $\tau\in\R$. It is easily seen that the map $\Psi:{}_0C(-\infty,\tau]\times\R\to{}_0C(-\infty,\tau]$ defined by $(z,\mu)\mapsto z-\Phi[z,\mu]$ is of the class $C^1$. Furthermore, for all $(z,\mu)\in {}_0C(-\infty,\tau]\times\R$ and $h\in {}_0C(-\infty,\tau]$, the equation $\Psi_z[z,\mu]v=h$, which is equivalent to
  \begin{gather*}
  v''+av'+(p\gamma^{p-1}L(t)-A^{p-1})v=-pL(t)(\max\{z+\gamma\}^{p-1}-\gamma^{p-1})v+h\ \textrm{for $t\in(-\infty,\tau)$},\\
  \lim_{t\to-\infty}v(t)=0,
  \end{gather*}
  is uniquely solvable in ${}_0C(-\infty,\tau]$ by Lemma~\ref{pertODE}. These enable us to apply the implicit function theorem on the map $\Psi$ at $(z^*_\mu,\mu)$ to see that $\mu\mapsto z^*_\mu$ is of the class $C^1$ from $\R$ to ${}_0C(-\infty,\tau]$. Furthermore, since $z_\mu'=\Phi[z_\mu,\mu]'$ and $h\mapsto Z[h]'$ is a bounded linear operator on ${}_0C(-\infty,\tau]$ by \eqref{pertODEdecayest''} in Lemma~\ref{linearsolvable}, we see that $\mu\mapsto z^{*\prime}_\mu$ is also of the class $C^1$ from $\R$ to ${}_0C(-\infty,\tau]$. Thus (1) follows.
  
  (2) follows by the facts that
  \[
{}_0C^{-\gamma}(0,\tau]=\{z\in {}_0C(-\infty,\tau]: z>-\gamma\ \textrm{on $(0,\tau]$}\}
  \]
  is an open subset of ${}_0C(0,\tau]$ and that $\Psi$ is an analytic map on ${}_0C^{-\gamma}(0,\tau]\times\R$.
  \end{proof}
\bigskip

In the following, we denote by $u(\cdot,\zeta)$ a regular solution to \eqref{ODE} such that $u(0,\zeta)=\zeta$ for $\zeta>0$, and define $r_0(\zeta)\in(0,\infty]$ as the maximal existence radius of $u(\cdot,\zeta)$, {\it i.e.}, $u(r_0(\zeta),\zeta)=0$ if $r_0(\zeta)<\infty$. Also, let $\overline{u}(\cdot,\zeta)$ be a solution to the problem
  \begin{equation}\label{ODE0}
\begin{cases}
    \overline{u}''+\frac{N-1}{r}\overline{u}'+k_0r^\alpha\overline{u}^p=0 & \textrm{for}\ r>0,\\
    \overline{u}(0)=\zeta,\\
    \overline{u}(r)>0 &\textrm{for}\ r>0.
    \end{cases}
  \end{equation}
  Note that
  \begin{equation}\label{ODE0rem}
    \begin{gathered}
      \overline{u}(r,\zeta)=\zeta\overline{u}(\zeta^{1/\theta}r,1)\ \textrm{for $\zeta>0$, $r>0$},\\
      \lim_{r\to\infty}r^{\theta}\overline{u}(r,\zeta)=\gamma,\ \lim_{r\to\infty}r^{1+\theta}\overline{u}_r(r,\zeta)=-\theta\gamma,\ \textrm{for $\zeta>0$.}
    \end{gathered}
  \end{equation}
 
\begin{lemma}\label{uapprox1}
  Assume the same conditions as in Theorem~\ref{S1T1}.
  Then for all $\sigma>0$, there is $\zeta_\sigma>0$ such that $r_0(\zeta)\ge \sigma\zeta^{-1/\theta}$ for all $\zeta>\zeta_\sigma$. Furthermore,
  \begin{gather}
    \lim_{\zeta\to\infty}\zeta^{-1}|u(\sigma\zeta^{-1/\theta},\zeta)-\overline{u}(\sigma\zeta^{-1/\theta},\zeta)|=0,\label{approx}\\
    \lim_{\zeta\to\infty}\zeta^{-1-1/\theta}|u_r(\sigma\zeta^{-1/\theta},\zeta)-\overline{u}_r(\sigma\zeta^{-1/\theta},\zeta)|=0,\label{approx'}
  \end{gather}
  for all $\sigma>0$.
\end{lemma}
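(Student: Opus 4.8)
\emph{The strategy.} The plan is to rescale so that $u(\cdot,\zeta)$ becomes a small perturbation of $\overline{u}(\cdot,1)$ on a fixed interval, and then couple a Gronwall estimate with a continuation argument. Fix $\sigma>0$ and set
\[
v(s):=\zeta^{-1}u(\zeta^{-1/\theta}s,\zeta),\qquad \overline{v}(s):=\zeta^{-1}\overline{u}(\zeta^{-1/\theta}s,\zeta).
\]
By the scaling identity in \eqref{ODE0rem}, $\overline{v}=\overline{u}(\cdot,1)$ is independent of $\zeta$, and it is positive on $[0,\infty)$; write $m:=\min_{[0,\sigma]}\overline{u}(\cdot,1)>0$ and $M:=\max_{[0,\sigma]}\overline{u}(\cdot,1)+m$. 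A direct computation using $\theta=(2+\alpha)/(p-1)$ shows that $v(0)=1$, that on the rescaled maximal interval $(0,\zeta^{1/\theta}r_0(\zeta))$ the function $v$ solves
\[
v''+\tfrac{N-1}{s}v'+\widetilde{K}_\zeta(s)v^p+\widetilde{f}_\zeta(s)=0,\quad \widetilde{K}_\zeta(s):=\zeta^{\,p-1-2/\theta}K(\zeta^{-1/\theta}s),\quad \widetilde{f}_\zeta(s):=\mu\zeta^{-1-2/\theta}f(\zeta^{-1/\theta}s),
\]
and that $\overline{v}$ solves the same equation with $\widetilde{K}_\zeta$ replaced by $k_0s^\alpha$ and $\widetilde{f}_\zeta$ by $0$. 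Since $(2+\alpha)/\theta=p-1$ we have $\widetilde{K}_\zeta(s)=\big(\zeta^{-1/\theta}s\big)^{-\alpha}K(\zeta^{-1/\theta}s)\,s^\alpha$, so \eqref{K0} gives $|\widetilde{K}_\zeta(s)-k_0s^\alpha|\le\overline{\omega}(\zeta)\,s^\alpha$ on $(0,\sigma]$ with $\overline{\omega}(\zeta):=\sup_{0<r\le\zeta^{-1/\theta}\sigma}|r^{-\alpha}K(r)-k_0|\to0$ as $\zeta\to\infty$; similarly, by \eqref{f02}, $|\widetilde{f}_\zeta(s)|\le C\zeta^{-1-(2+\nu)/\theta}s^\nu$ on $(0,\sigma]$, where $-1-(2+\nu)/\theta<0$.

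\emph{Gronwall estimate and continuation.} With $G(\rho,s):=(N-2)^{-1}(\rho^{2-N}-s^{2-N})\rho^{N-1}$, which satisfies $0\le G(\rho,s)\le\rho/(N-2)$ for $0<\rho<s$, both $v$ and $\overline{v}$ satisfy an integral identity of the form $w(s)=w(0)-\int_0^sG(\rho,s)\,(\text{nonlinear term})\,d\rho$ (the rescaled version of the identity used for \eqref{S2E0}); subtracting and setting $\phi:=v-\overline{v}$,
\[
|\phi(s)|\le\int_0^sG(\rho,s)\Big[\widetilde{K}_\zeta(\rho)\,|v(\rho)^p-\overline{v}(\rho)^p|+|\widetilde{K}_\zeta(\rho)-k_0\rho^\alpha|\,\overline{v}(\rho)^p+|\widetilde{f}_\zeta(\rho)|\Big]\,d\rho.
\]
For $\zeta$ large let $S_\zeta:=\sup\{s\in(0,\zeta^{1/\theta}r_0(\zeta)):|\phi|\le m/2\ \textrm{on}\ [0,s]\}$. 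On $[0,\min\{S_\zeta,\sigma\}]$ one has $m/2\le v\le M$ and $\widetilde{K}_\zeta(\rho)\le(k_0+1)\rho^\alpha$, hence $\widetilde{K}_\zeta(\rho)|v^p-\overline{v}^p|\le C_1\rho^\alpha|\phi(\rho)|$; inserting the coefficient bounds above together with $G(\rho,s)\le\rho/(N-2)$ gives
\[
|\phi(s)|\le\e(\zeta)+\frac{C_1}{N-2}\int_0^s\rho^{1+\alpha}|\phi(\rho)|\,d\rho,\qquad \e(\zeta):=C\,\overline{\omega}(\zeta)+C\,\zeta^{-1-(2+\nu)/\theta},
\]
where $\e(\zeta)\to0$ as $\zeta\to\infty$ and $\rho^{1+\alpha}$ is integrable on $[0,\sigma]$ because $\alpha>-2$. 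The generalized Gronwall inequality yields $\sup_{[0,\min\{S_\zeta,\sigma\}]}|\phi|\le\e(\zeta)e^{C_2}$ with $C_2=C_2(\sigma)$. Choose $\zeta_\sigma$ so that $\e(\zeta)e^{C_2}<m/2$ for $\zeta>\zeta_\sigma$. Then $S_\zeta\ge\sigma$: if $S_\zeta<\sigma$, then $S_\zeta<\zeta^{1/\theta}r_0(\zeta)$ (otherwise $v(S_\zeta)=0$, contradicting $v\ge m/2$ on $[0,S_\zeta]$), so by continuity $|\phi(S_\zeta)|=m/2$, contradicting the estimate. Hence $\zeta^{1/\theta}r_0(\zeta)\ge\sigma$, i.e.\ $r_0(\zeta)\ge\sigma\zeta^{-1/\theta}$, and $\sup_{[0,\sigma]}|\phi|\le\e(\zeta)e^{C_2}\to0$; evaluating at $s=\sigma$ and unwinding the rescaling gives \eqref{approx}. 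For \eqref{approx'}, differentiate the integral identities: $v'(s)=-s^{1-N}\int_0^s\rho^{N-1}(\widetilde{K}_\zeta v^p+\widetilde{f}_\zeta)\,d\rho$ and likewise for $\overline{v}'$, so on $[0,\sigma]$
\[
|v'(s)-\overline{v}'(s)|\le s^{1-N}\int_0^s\rho^{N-1}\Big[C_1\rho^\alpha|\phi(\rho)|+\overline{\omega}(\zeta)M^p\rho^\alpha+C\zeta^{-1-(2+\nu)/\theta}\rho^\nu\Big]\,d\rho;
\]
since $N-1+\alpha>-1$ and $N-1+\nu>-1$, each integral is finite and, after the factor $s^{1-N}$, remains bounded on $[0,\sigma]$, so $\sup_{[0,\sigma]}|v'-\overline{v}'|\to0$, which at $s=\sigma$ gives \eqref{approx'}.

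\emph{Main obstacle.} The principal difficulty is that the a priori unknown maximal existence radius $r_0(\zeta)$ is entangled with the estimate: the Gronwall bound for $v-\overline{v}$ is available only as long as $v$ stays positive and bounded, whereas the lower bound $r_0(\zeta)\ge\sigma\zeta^{-1/\theta}$ is precisely what one wants to extract from that bound. This is resolved by the continuation/exit argument above, which propagates the smallness of $v-\overline{v}$ and the non-vanishing of $v$ simultaneously. A secondary technical point is that $K$ and $f$ may be singular at the origin when $\alpha<0$ or $\nu<0$, so the Gronwall weight $\rho^{1+\alpha}$ and the source weights $\rho^{N-1+\alpha}$, $\rho^{N-1+\nu}$ are only integrable, not bounded, near $\rho=0$; the hypotheses $\alpha>-2$ and $\nu>-2$ are exactly what make these integrals converge.
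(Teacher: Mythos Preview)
Your proof is correct and follows the same core strategy as the paper's --- an integral identity plus Gronwall --- but the execution differs. The paper works directly in the $r$ variable and exploits the monotonicity bound $u(r,\zeta),\overline{u}(r,\zeta)\le\zeta$ to linearize $|u^p-\overline{u}^p|\le p\zeta^{p-1}|u-\overline{u}|$; the resulting Gronwall exponential $e^{C\zeta^{p-1}r^{2+\alpha}}$ stays bounded at $r=\sigma\zeta^{-1/\theta}$ because $\zeta^{p-1}(\sigma\zeta^{-1/\theta})^{2+\alpha}=\sigma^{2+\alpha}$. Since $u\le\zeta$ holds a priori wherever $u>0$, no continuation argument is needed: the estimate is automatically valid on $(0,\min\{r_0(\zeta),\sigma\zeta^{-1/\theta}\}]$, and positivity at the right endpoint then gives $r_0(\zeta)>\sigma\zeta^{-1/\theta}$. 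Your rescaling makes the comparison to the fixed profile $\overline{u}(\cdot,1)$ more transparent and the smallness of the coefficient errors $\widetilde{K}_\zeta-k_0s^\alpha$ and $\widetilde{f}_\zeta$ more visible, at the modest cost of the continuation/exit argument needed to keep $v$ in $[m/2,M]$. Both routes yield the same conclusion with comparable effort.

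One small remark: your claim that $\sup_{[0,\sigma]}|v'-\overline{v}'|\to 0$ is an overstatement when $\alpha<-1$ or $\nu<-1$, since for instance $s^{1-N}\int_0^s\rho^{N-1+\alpha}\,d\rho=s^{1+\alpha}/(N+\alpha)$ blows up at $s=0$. The lemma, however, only asks for the value at the fixed point $s=\sigma$, where your bound is perfectly correct, so this does not affect the argument.
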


\begin{proof}
  Let $\zeta>0$ and $r\in(0,r_0(\zeta)]$.
Subtracting \eqref{ODE0} from \eqref{ODE} and integrating it, we see that
  \begin{align*}
    u(r,\zeta)-\overline{u}(r,\zeta)
    &=-\int_0^r s^{1-N}\int_0^s t^{N-1}(K(t)u(t,\zeta)^p-k_0t^\alpha\overline{u}(t,\zeta)^p-\mu f(t))\,dt\,ds\\
    &=-\frac{1}{N-2}\int_0^r (t^{2-N}-r^{2-N})t^{N-1}(K(t)u(t,\zeta)^p-k_0t^\alpha\overline{u}(t,\zeta)^p-\mu f(t))\,dt.
  \end{align*}
  This together with $u(r,\zeta),\overline{u}(r,\zeta)\le\zeta$ and the mean value theorem implies that
  \begin{align*}
    |u(r,\zeta)-\overline{u}(r,\zeta)|
    &\le\frac{1}{N-2}\int_0^r  k_0t^{1+\alpha}|u(t,\zeta)^p-\overline{u}(t,\zeta)^p|\,dt\\
    &\qquad+\frac{1}{N-2}\int_0^r t^{1+\alpha}|t^{-\alpha} K(t)-k_0|u(t,\zeta)^p\,dt+\mu\int_0^r sf(s)\,ds\\
    &\le\frac{1}{N-2}\int_0^r pk_0t^{1+\alpha}\zeta^{p-1}|u(t,\zeta)-\overline{u}(t,\zeta)|\,dt+\Phi(r,\zeta),
  \end{align*}
  where
  \[
  \Phi(r,\zeta):=\frac{1}{N-2}\int_0^r t^{1+\alpha}|t^{-\alpha} K(t)-k_0|\zeta^p\,dt+\mu\int_0^{r}sf(s)\,ds.
  \]
  By the Gronwall inequality, we obtain
  \begin{equation}\label{approx1}
    |u(r,\zeta)-\overline{u}(r,\zeta)|\le \Phi(r,\zeta)e^{C_1\zeta^{p-1}r^{2+\alpha}},
  \end{equation}
  where $C_1$ is independent of $r,\zeta$. Furthermore, it follows from \eqref{K0} and \eqref{f0} that
  \begin{equation}\label{approx2}
      \Phi(r,\zeta)= o(1)\zeta^pr^{2+\alpha}+o(1)\ \textrm{as $r\to 0$ uniformly in $\zeta$}.
  \end{equation}
  Thus, for all $\sigma>0$ and $\varepsilon>0$, choosing sufficiently large $\zeta^\varepsilon>0$, we deduce from \eqref{approx1} and \eqref{approx2} that
\begin{equation}\label{S3L5E1}
  |u(r,\zeta)-\overline{u}(r,\zeta)|\le e^{C_1\sigma^{2+\alpha}}\varepsilon(\sigma^{2+\alpha}\zeta+1)\le 2\sigma^{2+\alpha}e^{C_1\sigma^{2+\alpha}}\varepsilon\zeta
\end{equation}
  for all $\zeta>\zeta^\varepsilon$ and $r\in(0,\min\{r_0(\zeta),\sigma\zeta^{-1/\theta}\}]$. Since $\overline{u}(r,\zeta)=\zeta\overline{u}(1,\zeta^{1/\theta})>\zeta\overline{u}(1,\sigma)$, selecting $\varepsilon>0$ so that $2\sigma^{2+\alpha}e^{C_1\sigma^{2+\alpha}}\varepsilon<\overline{u}(1,\sigma)$, we obtain
  \[
  u(\min\{r_0(\zeta),\sigma\zeta^{-1/\theta}\},\zeta)\ge \zeta\overline{u}(1,\sigma)-2\sigma^{2+\alpha}e^{C_1\sigma^{2+\alpha}}\varepsilon\zeta>0,
  \]
  and hence $r_0(\zeta)>\sigma\zeta^{-1/\theta}$ for all $\zeta>\zeta^\varepsilon$.
We define $\zeta_{\sigma}$ by
$$
\zeta_{\sigma}:=\zeta^{\e}.
$$
  Moreover, \eqref{S3L5E1} implies \eqref{approx}.

  Also, subtracting \eqref{ODE0} from \eqref{ODE} and integrating it, we have
  \[
  u_r(r,\zeta)-\overline{u}_r(r,\zeta)=-{r^{1-N}}\int_0^rs^{N-1}(K(s)u(s,\zeta)^p-k_0s^{\alpha}\overline{u}(s,\zeta)^p)\,ds-\mu r^{1-N}\int_0^{r}s^{N-1}f(s)\,ds.
  \]
  This together with $u(r,\zeta),\overline{u}(r,\zeta)\le\zeta$, \eqref{approx1} and the mean value theorem implies that for $r\le\sigma\zeta^{-1/\theta}$,
  \begin{align*}
    |u_r(r,\zeta)-\overline{u}_r(r,\zeta)|
    &\le r^{1-N}\int_0^r k_0s^{N-1+\alpha}|u(s,\zeta)^p-\overline{u}(s,\zeta)^p|\,ds\\
    &\qquad+r^{1-N}\int_0^r s^{N-1+\alpha}|s^{-\alpha} K(s)-k_0|u(s,\zeta)^p\,ds+\mu r^{1-N}\int_0^{r}s^{N-1}f(s)\,ds\\
    &\le r^{1-N}\int_0^r pk_0s^{N-1+\alpha}\cdot C\zeta^{p-1}\Phi(r,\zeta)\,ds+\Psi(r,\zeta)\\
    &\le C\zeta^{p-1}r^{1+\alpha}\Phi(r,\zeta)+\Psi(r,\zeta),
  \end{align*}
where $C$ is independent of $\zeta$ and
  \[
  \Psi(r,\zeta):=r^{1-N}\int_0^r s^{N-1+\alpha}|s^{-\alpha} K(s)-k_0|\zeta^p\,ds+\mu{r^{-1}}\int_0^{r}sf(s)\,ds.
  \]
  It follows from \eqref{K0} and \eqref{f0} that
  \begin{equation*}
    \Psi(r,\zeta)=\zeta^pr^{1+\alpha}o(1)+o(r^{-1})\ \textrm{as $r\to 0$ uniformly in $\zeta$.}
  \end{equation*}
  Hence, noting that $p-1-(1+\alpha)/\theta=1/\theta$, we obtain
  \begin{align*}
  |u_r(\sigma\zeta^{-1/\theta},\zeta)-\overline{u}_r(\sigma\zeta^{-1/\theta},\zeta)|
  &{\le C\zeta^{p-1}(\sigma\zeta^{-1/\theta})^{1+\alpha}\Phi(\sigma\zeta^{-1/\theta},\zeta)+\Psi(\sigma\zeta^{-1/\theta},\zeta)}\\
  &=o(\zeta^{1+1/\theta})+o(\zeta^{1+1/\theta})+o({\zeta^{1/\theta}})\\
  &=o(\zeta^{1+1/\theta})
  \end{align*}
  as $\zeta\to\infty$, and thus \eqref{approx'} holds.
\end{proof}

\begin{lemma}\label{uapprox2}
  Assume the same conditions as in Theorem~\ref{S1T1}. There are $\varepsilon_0>0$ and $C_0>0$ such that the following holds:
  For all $\varepsilon\in(0,\varepsilon_0]$, there is $\rho>0$ such that if $r_1\in(0,\rho)$ satisfies
  \begin{equation}\label{trap1}
    |u(r_1,\zeta)-\gamma r_1^{-\theta}|\le\varepsilon r_1^{-\theta}\ \textrm{and}\ |u_r(r_1,\zeta)+\theta\gamma r_1^{-1-\theta}|\le\varepsilon r_1^{-1-\theta},
  \end{equation}
  then $r_0(\zeta)>\rho$ and
  \begin{equation*}
    |u(r,\zeta)-\gamma r^{-\theta}|\le C_0\varepsilon r^{-\theta},\ |u_r(r,\zeta)+\theta\gamma r^{-1-\theta}|\le C_0\varepsilon r^{-1-\theta},
  \end{equation*}
  for all $r\in (r_1,\rho]$.
\end{lemma}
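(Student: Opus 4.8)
The plan is to pass to the Emden--Fowler variable $t=\log r$, in which the trapping condition \eqref{trap1} becomes smallness of a solution of the perturbed linear equation \eqref{zeq'} and its derivative at the point $t_1:=\log r_1$; the conclusion then follows from the decay estimate in Lemma~\ref{pertODE} combined with a continuity (bootstrap) argument that controls the quadratic nonlinearity $R(z)$. First I would set up the dictionary: with $w(t):=e^{\theta t}u(e^t,\zeta)$ and $z(t):=w(t)-\gamma$, one has $w'(t)=\theta w(t)+e^{(1+\theta)t}u_r(e^t,\zeta)$, so that, writing $r=e^t$,
\[
r^{\theta}u(r,\zeta)-\gamma=z(t),\qquad r^{1+\theta}u_r(r,\zeta)+\theta\gamma=z'(t)-\theta z(t).
\]
Hence \eqref{trap1} is equivalent to $|z(t_1)|\le\varepsilon$ and $|z'(t_1)|\le(\theta+1)\varepsilon$, and conversely a bound $|z(t)|+|z'(t)|\le M\varepsilon$ on $[t_1,\log\rho]$ yields the two conclusions with $C_0=(1+\theta)M$. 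Moreover, on any subinterval of $(0,r_0(\zeta))$ (equivalently, as long as $z>-\gamma$), $z$ solves the differential equation in \eqref{zeq'},
\[
z''+az'+b(t)z=R(z)+(k_0-L(t))\gamma^p-\mu e^{(2+\theta)t}f(e^t),\qquad b(t):=p\gamma^{p-1}L(t)-A^{p-1},
\]
with $|R(z)|\le C_R|z|^2$ for $|z|\le\gamma/2$ (this uses $L$ bounded near $-\infty$, from \eqref{L0}, exactly as in the proof of Theorem~\ref{singexists}).

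Next I would fix the constants, keeping $C_0$ and $\varepsilon_0$ independent of $\varepsilon$ while allowing $\rho$ to depend on $\varepsilon$. Recall $a=N-2-2\theta>0$ and $b:=\lim_{t\to-\infty}b(t)=(p-1)A^{p-1}>0$ (both positive since $p>p_S(\alpha)$). Let $\lambda>0$, $C>0$ be the constants furnished by Lemma~\ref{pertODE} for this pair $(a,b)$, and choose $\tau_0\in\R$ so negative that $|b(t)-b|<\lambda/8$ for all $t\le\tau_0$; write $\tlambda:=3\lambda/4>0$. Choose $M>0$ large, depending only on $C,\theta,\tlambda$, so that $C(\theta+2)+2C/\tlambda<M$; then choose $\varepsilon_0>0$ small, depending only on $M,C,C_R,\tlambda,\gamma$, so that $M\varepsilon_0\le\gamma/2$ and $CC_RM^2\varepsilon_0/\tlambda<1$; finally set $C_0:=(1+\theta)M$. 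Given $\varepsilon\in(0,\varepsilon_0]$, use \eqref{L0} and \eqref{g0} to pick $\tau\le\tau_0$ so negative that $|(k_0-L(t))\gamma^p|\le\varepsilon$ and $|\mu|\,e^{(2+\theta)t}f(e^t)\le\varepsilon$ for all $t\le\tau$, and put $\rho:=e^\tau$.

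For the bootstrap, assume $r_1<\rho$, i.e.\ $t_1<\tau$, and \eqref{trap1}. Suppose the desired estimate fails. Since $|z(t_1)|+|z'(t_1)|\le(\theta+2)\varepsilon<M\varepsilon$ and $z,z'$ are continuous near $t_1$, there is a first $t^*\in(t_1,\tau]$ with $t^*\le\log r_0(\zeta)$ at which either $|z(t^*)|+|z'(t^*)|=M\varepsilon$ or $z(t^*)=-\gamma$. On $[t_1,t^*)$ we have $|z|\le M\varepsilon\le\gamma/2$, so by continuity $z(t^*)\ge-M\varepsilon>-\gamma$, ruling out the second alternative; in particular $t^*<\log r_0(\zeta)$, so $z$ solves the above ODE on all of $[t_1,t^*]$, where its right-hand side $g(s)$ satisfies $|g(s)|\le C_R|z(s)|^2+2\varepsilon\le(C_RM^2\varepsilon_0+2)\varepsilon$. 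Applying \eqref{pertODEdecayest} of Lemma~\ref{pertODE} on $(-\infty,\tau)$ with reference point $t_1$ (the first-order coefficient in \eqref{zeq'} being exactly $a$, and $|b(t)-b|<\lambda/8$ there) gives, for $t\in(t_1,t^*]$,
\[
|z(t)|+|z'(t)|\le Ce^{-\tlambda(t-t_1)}(\theta+2)\varepsilon+C\!\int_{t_1}^{t}\!e^{-\tlambda(t-s)}|g(s)|\,ds\le\Big(C(\theta+2)+\tfrac{2C}{\tlambda}+\tfrac{CC_RM^2\varepsilon_0}{\tlambda}\Big)\varepsilon<M\varepsilon,
\]
contradicting $|z(t^*)|+|z'(t^*)|=M\varepsilon$. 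Hence no such $t^*$ exists: $|z|+|z'|<M\varepsilon$ and $z>-\gamma$ on $(t_1,\tau]$, so $r_0(\zeta)>\rho$ (note also $u(\cdot,\zeta)>0$ on $(0,r_1]$ because $r_1<r_0(\zeta)$), and translating back via the dictionary yields the two estimates on $(r_1,\rho]$ with $C_0=(1+\theta)M$.

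The main obstacle is closing the bootstrap with $\varepsilon$-independent constants: the quadratic term $R(z)$ contributes the coefficient $CC_RM^2\varepsilon_0/\tlambda$, which must be $<1$, so $\varepsilon_0$ has to be small relative to $M$; there is no circularity, since $M$ (hence $C_0$) is pinned down by the linear estimate alone, before $\varepsilon_0$ is chosen. One must also be mildly careful that $z$ solves the Emden--Fowler equation only where $u(\cdot,\zeta)>0$, which is why positivity of $u$ (i.e.\ $z>-\gamma$) is tracked alongside the smallness of $|z|+|z'|$ in the definition of $t^*$.
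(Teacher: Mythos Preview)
Your proof is correct and follows essentially the same route as the paper's: pass to the Emden--Fowler variable, apply the linear decay estimate of Lemma~\ref{pertODE} to $z=w-\gamma$, and close by a continuation/bootstrap argument. The only organizational difference is that the paper absorbs the nonlinearity into the coefficient, writing
\[
b(t)=-A^{p-1}+L(t)\int_0^1 p(\gamma+\sigma z(t))^{p-1}\,d\sigma
\]
so that the right-hand side is purely the exogenous forcing $-(L(t)-k_0)\gamma^p-\mu g(t)$, whereas you keep $b(t)=p\gamma^{p-1}L(t)-A^{p-1}$ fixed and carry the quadratic remainder $R(z)$ as part of the forcing; both choices feed into Lemma~\ref{pertODE} the same way and yield the same conclusion.

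One trivially fixable arithmetic slip: your two constraints $C(\theta+2)+2C/\tlambda<M$ and $CC_RM^2\varepsilon_0/\tlambda<1$ only guarantee the final bracket is $<M+1$, not $<M$, so the bootstrap does not quite close as written. Replace the first choice by $M\ge C(\theta+2)+2C/\tlambda+1$ (or, equivalently, after fixing $M$ require $CC_RM^2\varepsilon_0/\tlambda<M-C(\theta+2)-2C/\tlambda$); since $M$ is chosen before $\varepsilon_0$, there is no circularity.
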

\begin{proof}
  Let $w_\zeta(t):=e^{\theta t}u(e^{t},\zeta)$ and $\overline{w}_\zeta(t):=w_\zeta(t)-\gamma$ for $\zeta>0$ and $t\in\R$ for which $u(e^t,\zeta)>0$.
Because of the assumption of the lemma especially \eqref{trap1}, for small $\e>0$,
\begin{equation}\label{trap1'}
    |\overline{w}_\zeta(t_0)|\le\varepsilon,\ |\overline{w}'_\zeta(t_0)|\le(\theta+1)\varepsilon,
  \end{equation}
where $t_1:=\log r_1$.
Furthermore, $\overline{w}_\zeta$ is a solution to
  \begin{equation*}
    \overline{w}_\zeta''+a\overline{w}_\zeta'+b(t)\overline{w}_\zeta=-(L(t)-k_0)\gamma^p-\mu g(t),
  \end{equation*}
  where
  \[
  b(t):=-A^{p-1}+L(t)\int_0^1 p(\gamma+\sigma\overline{w}_{\zeta}(t))^{p-1}\,d\sigma.
  \]
Because of the continuity of the solution $\overline{w}_{\zeta}$, for each small $\delta\in (0,\gamma)$, 
we see that the set $\{ \overline{t}\in (t_1,\infty]:\ |\overline{w}_{\zeta}(t)|\le \delta\ \ \text{for}\ \ t_1\le t\le \overline{t} \}$ is not empty.
Let
\begin{equation}\label{S3L6E1}
t_2:=\sup\{ \overline{t}\in (t_1,\infty]:\ |\overline{w}_{\zeta}(t)|\le \delta\ \ \text{for}\ \ t_1\le t\le \overline{t} \}.
\end{equation}

It follows from \eqref{S3L6E1} that
  \begin{align*}
  |b(t)-(p-1)A^{p-1}|&=p\left|L(t)\int_0^1(\gamma+\sigma\overline{w}_{\zeta}(t))^{p-1}\,d\sigma-k_0\gamma^{p-1}\right|\\
  &\le p|L(t)-k_0|\int_0^1(\gamma+\sigma\overline{w}_{\zeta}(t))^{p-1}\,d\sigma+pk_0\int_0^1|(\gamma+\sigma\overline{w}_{\zeta}(t))^{p-1}-\gamma^{p-1}|\,d\sigma\\
  &\le C|L(t)-k_0|+C\delta
\end{align*}
for $t_1\le t\le t_2$.
This together with Lemma~\ref{pertODE} and \eqref{trap1'} implies that there exist $\tau_0\in\R$ (independent of $\varepsilon_0$) and $\lambda>0$ such that
  \begin{equation}\label{wbarest1}
  |\overline{w}_{\zeta}(t)|+|\overline{w}_{\zeta}'(t)|\le Ce^{-\lambda(t-t_1)}\varepsilon+C\int_{t_1}^te^{-\lambda(t-s)}(\gamma^{p-1}|L(s)-k_0|+|g(s)|)\,ds
  \end{equation}
  for all $t\in(t_1,\min\{t_2,\tau_0,\log r_0(\zeta)\}]$, provided that $\varepsilon_0$ is sufficiently small and that $t_1<\tau_0$.

  It follows from \eqref{g0} that
  \[
  \int_{-\infty}^t e^{-\lambda(t-s)}|g(s)|\,ds\le\int_{-\infty}^t e^{\theta(t-s)}|g(s)|\,ds\le Ce^{\theta t}
  \]
  for $t>t_1$. This together with \eqref{L0} implies the existence of $\tau_1\in(t_1,\tau_0)$ such that
  \[
  \int_{t_1}^te^{-\lambda(t-s)}(\gamma^{p-1}|L(s)-k_0|+|g(s)|)\,ds\le\varepsilon
  \]
  for all $t_1<t<\tau_1$. Combining this and \eqref{wbarest1}, we obtain
\begin{equation}\label{S3L6E2}
  |\overline{w}_{\zeta}(t)|+|\overline{w}_{\zeta}'(t)|\le C_0\varepsilon
\end{equation}
  for all $t\in(t_1,\min\{t_2,\tau_1,\log r_0(\zeta)\}]$.
We define $\e_0$ by
$$
\e_0:=\frac{\delta}{2C_0}.
$$
Then, if $0<\e<\e_0$, then \eqref{S3L6E2} holds for $t_1\le t\le\min\{\tau_1,\log r_0(\zeta)\}$, otherwise $t_2<\min\{\tau_1,\log r_0(\zeta)\}$ and $|\overline{w}_{\zeta}(t_2)|\ge \delta$ (by \eqref{S3L6E1}) which contradicts \eqref{S3L6E2}.
  
  This implies that
  \[
|u(r,\zeta)-\gamma r^{-\theta}|\le C_0\varepsilon r^{-\theta},\ \ 
|u_r(r,\zeta)+\theta\gamma r^{-1-\theta}|\le C_0\varepsilon r^{-1-\theta},
  \]
  for all $r\in(r_1,\min\{\rho,r_0(\zeta)\}]$, where we define $\rho$ by 
$$
\rho:=e^{\tau_1}.
$$  
  Moreover, since
  \[
  u(\min\{\rho,r_0(\zeta)\},\zeta)\ge \gamma r^{-\theta}-C_0\e r^{-\theta}>\left(\gamma-\frac{\delta}{2}\right)r^{-\theta}>0,
  \]
  we see that $r_0(\zeta)>\rho$.
\end{proof}

\begin{theorem}\label{convtosing}
Assume the same conditions as in Theorem~\ref{S1T1}. There exists $r^*>0$ such that \eqref{ODE} has a unique singular solution $u^*(r)$ for $0<r<r^*$ and that
the regular solution $u(r,\zeta)$ satisfies
\[
u(\,\cdot\,,\zeta)\to u^*\ \ \textrm{in}\ \ C^2_{\rm loc}(0,r^*)\ \ \textrm{as}\ \zeta\to\infty.
\]
\end{theorem}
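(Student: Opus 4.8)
The plan is to realize the singular solution $u^*$ as the $C^2_{\rm loc}(0,r^*)$ limit of the regular solutions $u(\cdot,\zeta)$ as $\zeta\to\infty$. The engine is the trapping estimate of Lemma~\ref{uapprox2}: first I would verify its hypothesis \eqref{trap1} at a radius $r_1(\zeta)$ that shrinks to $0$ as $\zeta\to\infty$, which then confines $u(\cdot,\zeta)$ between constant multiples of $\gamma r^{-\theta}$ on a $\zeta$-independent interval $(r_1(\zeta),r^*]$; then extract a limit by Ascoli--Arzel\`{a}; and finally upgrade subsequential convergence to convergence of the whole family using the uniqueness of the singular solution provided by Lemma~\ref{u0asypro} and Theorem~\ref{singexists}.

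For the trapping step I would fix $\varepsilon_0,C_0>0$ as in Lemma~\ref{uapprox2}, shrinking $\varepsilon_0$ if needed so that $C_0\varepsilon_0<\gamma$, let $\rho>0$ be the radius that Lemma~\ref{uapprox2} produces for $\varepsilon=\varepsilon_0$, and set $r^*:=\rho$. Using the exact asymptotics in \eqref{ODE0rem}, choose $\sigma_0>0$ so large that $|\sigma_0^{\theta}\overline u(\sigma_0,1)-\gamma|\le\varepsilon_0/4$ and $|\sigma_0^{1+\theta}\overline u_r(\sigma_0,1)+\theta\gamma|\le\varepsilon_0/4$, and put $r_1(\zeta):=\sigma_0\zeta^{-1/\theta}$. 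The self-similarity $\overline u(r,\zeta)=\zeta\overline u(\zeta^{1/\theta}r,1)$ yields $r_1(\zeta)^{\theta}\overline u(r_1(\zeta),\zeta)=\sigma_0^{\theta}\overline u(\sigma_0,1)$ and $r_1(\zeta)^{1+\theta}\overline u_r(r_1(\zeta),\zeta)=\sigma_0^{1+\theta}\overline u_r(\sigma_0,1)$, so $\overline u(\cdot,\zeta)$ satisfies \eqref{trap1} at $r_1(\zeta)$ with slack $\varepsilon_0/4$. Lemma~\ref{uapprox1} applied with $\sigma=\sigma_0$ then gives, for $\zeta$ large, that $r_0(\zeta)\ge r_1(\zeta)$ and that at $r=r_1(\zeta)$ one has $r^{\theta}|u(\cdot,\zeta)-\overline u(\cdot,\zeta)|=\sigma_0^{\theta}\zeta^{-1}|u(\cdot,\zeta)-\overline u(\cdot,\zeta)|\to 0$ and $r^{1+\theta}|u_r(\cdot,\zeta)-\overline u_r(\cdot,\zeta)|=\sigma_0^{1+\theta}\zeta^{-1-1/\theta}|u_r(\cdot,\zeta)-\overline u_r(\cdot,\zeta)|\to 0$, using \eqref{approx} and \eqref{approx'}. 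Consequently, for all sufficiently large $\zeta$ the solution $u(\cdot,\zeta)$ satisfies \eqref{trap1} at $r_1=r_1(\zeta)$ with $\varepsilon=\varepsilon_0$ and $r_1(\zeta)<r^*$, so Lemma~\ref{uapprox2} yields $r_0(\zeta)>r^*$ together with $|u(r,\zeta)-\gamma r^{-\theta}|\le C_0\varepsilon_0 r^{-\theta}$ and $|u_r(r,\zeta)+\theta\gamma r^{-1-\theta}|\le C_0\varepsilon_0 r^{-1-\theta}$ for every $r\in(r_1(\zeta),r^*]$.

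Since $r_1(\zeta)\to 0$, on every compact $[a,b]\subset(0,r^*)$ these bounds hold for all large $\zeta$, so $\{u(\cdot,\zeta)\}$ is bounded in $C^1([a,b])$; the equation in \eqref{ODE} then bounds $u''(\cdot,\zeta)$ on $[a,b]$, making $\{u'(\cdot,\zeta)\}$ equi-Lipschitz there. By Ascoli--Arzel\`{a} with a diagonal argument over an exhaustion of $(0,r^*)$, any sequence $\zeta_n\to\infty$ has a subsequence along which $u(\cdot,\zeta_n)$ converges in $C^1_{\rm loc}(0,r^*)$; feeding this convergence back into the equation shows the limit is of class $C^2$, solves \eqref{ODE} on $(0,r^*)$, and the convergence is in fact in $C^2_{\rm loc}(0,r^*)$. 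Letting $\zeta\to\infty$ in the two bounds above, any such limit $u^*$ satisfies $|u^*(r)-\gamma r^{-\theta}|\le C_0\varepsilon_0 r^{-\theta}$ on $(0,r^*]$, whence $u^*>0$ on $(0,r^*)$ (as $C_0\varepsilon_0<\gamma$) and $u^*(r)\to\infty$ as $r\to+0$, i.e.\ $u^*$ is a singular solution. For uniqueness I would note that any singular solution $v$ of \eqref{ODE} on $(0,r^*)$ has $\lim_{r\to+0}r^{\theta}v(r)=\gamma$ by Lemma~\ref{u0asypro}, hence solves \eqref{ODE'} near $r=0$ since it is positive there, hence coincides with $u^*_\mu$ near $r=0$ by the uniqueness in Theorem~\ref{singexists}, hence---by uniqueness for the initial value problem and the fact that $u^*$, which agrees with $u^*_\mu$ near $0$, stays positive on $(0,r^*)$---coincides with $u^*$ on all of $(0,r^*)$. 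Thus every subsequential $C^2_{\rm loc}$-limit is the same function $u^*$, and the full family converges. I expect the one step requiring actual care rather than bookkeeping to be the matching of powers of $\zeta$ and $\sigma_0$ in the trapping verification---confirming that the self-similar profile enters the trapping window \eqref{trap1} precisely at scale $r_1(\zeta)=\sigma_0\zeta^{-1/\theta}$ and that the perturbation errors from \eqref{approx}--\eqref{approx'} stay of lower order after multiplication by $r_1(\zeta)^{\theta}$ and $r_1(\zeta)^{1+\theta}$; the rest is an assembly of Lemmas~\ref{uapprox1}, \ref{uapprox2}, \ref{u0asypro} and Theorem~\ref{singexists}.
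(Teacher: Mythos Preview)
Your proposal is correct and follows essentially the same approach as the paper: verify \eqref{trap1} at $r_1(\zeta)=\sigma_0\zeta^{-1/\theta}$ via the self-similar asymptotics \eqref{ODE0rem} and Lemma~\ref{uapprox1}, invoke Lemma~\ref{uapprox2} to trap $u(\cdot,\zeta)$ on a fixed interval $(r_1(\zeta),r^*]$, extract a singular limit by Ascoli--Arzel\`a, and upgrade to full convergence via the uniqueness in Theorem~\ref{singexists}. Your treatment is in fact slightly more explicit than the paper's in two places---the power-matching at scale $r_1(\zeta)$ and the bridge from Lemma~\ref{u0asypro} to the uniqueness statement of Theorem~\ref{singexists}---but the argument is otherwise identical; your ``shrink $\varepsilon_0$ so that $C_0\varepsilon_0<\gamma$'' is just the paper's choice $\varepsilon=\min\{\gamma/(2C_0),\gamma/(2\theta C_0),\varepsilon_0\}$ phrased differently.
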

\begin{proof}
Let $\e_0>0$ and $C_0>0$ be as in Lemma~\ref{uapprox2}.
Let $r^*:=\rho>0$ be as in Lemma~\ref{uapprox2} with $\varepsilon=\min\{\gamma/(2C_0),\gamma/(2\theta C_0),\varepsilon_0\}$. By \eqref{ODE0rem}, we find $\sigma_0>1$ such that
  \begin{equation*}
    |\overline{u}(\sigma_0\zeta^{-1/\theta},\zeta)-\gamma\sigma_0^{-\theta}\zeta|\le\frac{\varepsilon}{2}\sigma_0^{-\theta}\zeta,\ |\overline{u}_r(\sigma_0\zeta^{-1/\theta},\zeta)+\gamma\theta\sigma_0^{-1-\theta}\zeta^{1+1/\theta}|\le\frac{\varepsilon}{2}\sigma_0^{-1-\theta}\zeta^{1+1/\theta},
  \end{equation*}
  for all $\zeta>0$. Combining this with Lemma~\ref{uapprox1}, we obtain
$$
    |u(\sigma_0\zeta^{-1/\theta},\zeta)-\gamma\sigma_0^{-\theta}\zeta|\le\varepsilon\sigma_0^{-\theta}\zeta,\ |u_r(\sigma_0\zeta^{-1/\theta},\zeta)+\gamma\theta\sigma_0^{-1-\theta}\zeta^{1+1/\theta}|\le\varepsilon\sigma_0^{-1-\theta}\zeta^{1+1/\theta},
$$
  for sufficiently large $\zeta$.
  It follows from Lemma~\ref{uapprox2} that $r_0(\zeta)>r^*$ and
  \begin{equation}\label{u-gamma'}
    |u(r,\zeta)-\gamma r^{-\theta}|\le{C_0\frac{\gamma}{2C_0}}r^{-\theta},\ 
    |u_r(r,\zeta)+\gamma\theta r^{-1-\theta}|\le C_0{\frac{\gamma\theta}{2C_0}}r^{-1-\theta},
  \end{equation}
  for $r\in(\sigma_0\zeta^{-1/\theta},r^*]$.

  The inequalities \eqref{u-gamma'} yield the boundedness and equicontinuity of $u(\cdot,\zeta)$ for sufficiently large $\zeta>0$ on each compact subset of $(0,r^*]$. By the Ascoli-Arzel\'a theorem with a diagonal argument, there is a sequence $\{\zeta_n\}_{n=1}^\infty\subset(0,\infty)$ such that $\zeta_n\to\infty$ as $n\to\infty$ and that $u(\cdot,\zeta)$ converges to some $u^*$ in $C^2_{\textrm{loc}}(0,r^*]$ as $n\to\infty$. Since
  \begin{equation}\label{eq:u*bddbelow}
  u^*(r)\ge \frac{\gamma}{2}r^{-\theta}\ \textrm{for all $r\in(0,\rho]$}
  \end{equation}
  by \eqref{u-gamma'}, $u^*$ is a singular solution to \eqref{ODE}. This together with the uniqueness of a singular solution to \eqref{ODE} (see Theorem~\ref{singexists}) implies that
  \[
  u(\,\cdot\,,\zeta)\to u^*\ \ \textrm{in}\ \ C^2_{\rm loc}(0,\rho]\ \ \textrm{as}\ \zeta\to\infty.
  \]
\end{proof}

\section{Intersection number}
Let $\overline{u}(r,\zeta)$ denote the solution of \eqref{ODE0}.
The function $\gamma r^{-\theta}$ is the singular solution of the equation in \eqref{ODE0}.
Then, we recall the following:
If $p_S(\alpha)<p<p_{JL}(\alpha)$, then there are $0=\sigma_0<\sigma_1<\sigma_2<\ldots$ such that $\lim\limits_{n \to\infty}\sigma_n=\infty$ and
\begin{equation}\label{ubarintersect}
\begin{gathered}
  \overline{u}(\sigma_n,1)=\gamma \sigma_n^{-\theta}\ \textrm{for $n\in\Z_{\ge 1}$},\\
  (-1)^n\overline{u}(\sigma,1)<(-1)^n\gamma \sigma^{-\theta}\ \textrm{for $\sigma\in(\sigma_{n},\sigma_{n+1})$, $n\in\Z_{\ge 0}$}.
\end{gathered}
\end{equation}
Roughly speaking, the intersection number between $\overline{u}(\sigma,1)$ and $\gamma\sigma^{-\theta}$ in $0<\sigma<\infty$ is infinite.
See \cite[Proposition 3.5]{W93} and \cite[Theorem A]{MT17} for details.

\begin{theorem}\label{inftyintersect}
Assume the same conditions as in Theorem~\ref{S1T2}. Let $u^*$ be a singular solution of \eqref{ODE} on $(0,r^*)$. Then for each $\rho\in (0,r^*]$,
\[
\calZ_{(0,\rho)}\left[u(\,\cdot\,,\zeta)-u^*\right]\to\infty\ \ \textrm{as}\ \ \zeta\to\infty.
\]
\end{theorem}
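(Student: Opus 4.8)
The plan is to transfer the known infinite-oscillation property \eqref{ubarintersect} of the homogeneous profile $\bar u(\cdot,1)$ to the pair $u(\cdot,\zeta)$, $u^*$ by means of the blow-up rescaling $r=\sigma\zeta^{-1/\theta}$. On this scale the intersections of $u(\cdot,\zeta)$ with $u^*$ accumulate near $r=0$, and I will produce arbitrarily many of them. Fix $K\in\N$; it suffices to show $\calZ_{(0,\rho)}[u(\cdot,\zeta)-u^*]\ge K$ for all $\zeta$ larger than some $\zeta_0(K)$.

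First I record the model. Set $\phi(\sigma):=\bar u(\sigma,1)-\gamma\sigma^{-\theta}$ on $(0,\infty)$; by \eqref{ubarintersect} it vanishes at $\sigma_1<\cdots<\sigma_K$ and changes sign at each of them, its sign on $(\sigma_n,\sigma_{n+1})$ being $(-1)^{n+1}$ with $\sigma_0:=0$. I then fix $\Sigma\in(\sigma_K,\sigma_{K+1})$ and interlacing points $0<\tau_0<\sigma_1<\tau_1<\cdots<\sigma_K<\tau_K<\Sigma$, so that $\phi(\tau_0),\dots,\phi(\tau_K)$ have strictly alternating signs and $c:=\min_{0\le j\le K}|\phi(\tau_j)|>0$.

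Next I pass to the blow-up. By the first assertion of Lemma~\ref{uapprox1} (applied with $\sigma=\Sigma$), $r_0(\zeta)\ge\Sigma\zeta^{-1/\theta}$ for $\zeta$ large, so that $u(\tau_j\zeta^{-1/\theta},\zeta)$ is well defined (and positive) for every $j$ since $\tau_j<\Sigma$. Combining the scaling relation $\bar u(r,\zeta)=\zeta\bar u(\zeta^{1/\theta}r,1)$ from \eqref{ODE0rem} with \eqref{approx}, I obtain $\zeta^{-1}u(\tau_j\zeta^{-1/\theta},\zeta)\to\bar u(\tau_j,1)$ as $\zeta\to\infty$; and by Lemma~\ref{u0asypro}, $\zeta^{-1}u^*(\tau_j\zeta^{-1/\theta})=\gamma\tau_j^{-\theta}(1+o(1))\to\gamma\tau_j^{-\theta}$. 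Subtracting, $\zeta^{-1}\bigl(u(\tau_j\zeta^{-1/\theta},\zeta)-u^*(\tau_j\zeta^{-1/\theta})\bigr)\to\phi(\tau_j)$ for each of the finitely many $j$. Hence for $\zeta$ large the values $u(\tau_j\zeta^{-1/\theta},\zeta)-u^*(\tau_j\zeta^{-1/\theta})$ inherit the alternating signs of $\phi(\tau_j)$; these are $K+1$ points inside $(\tau_0\zeta^{-1/\theta},\tau_K\zeta^{-1/\theta})$, so $r\mapsto u(r,\zeta)-u^*(r)$ has at least $K$ sign changes there, and since $\tau_K\zeta^{-1/\theta}\to0$ this interval lies in $(0,\rho)$ once $\zeta$ is large. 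Thus $\calZ_{(0,\rho)}[u(\cdot,\zeta)-u^*]\ge K$ for all such $\zeta$, and letting $K\to\infty$ finishes the argument.

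The only genuine subtlety is that everything has to be done on the shrinking scale $r\sim\zeta^{-1/\theta}$, on which the intersections pile up: the $C^2_{\rm loc}(0,r^*)$ convergence $u(\cdot,\zeta)\to u^*$ from Theorem~\ref{convtosing} is blind to this scale and therefore useless here. What does the job are Lemma~\ref{uapprox1}, which tracks $u(\cdot,\zeta)$ against the scale-invariant family $\bar u(\cdot,\zeta)$ down to radii of order $\zeta^{-1/\theta}$, and Lemma~\ref{u0asypro}, which fixes the leading term of $u^*$ to be $\gamma r^{-\theta}$; granting these, the transfer of \eqref{ubarintersect} is routine. I do not anticipate any essential obstacle beyond this bookkeeping — in particular, note that only pointwise convergence of the rescaled difference at the finitely many chosen points $\tau_j$ is needed, so no uniform-in-$\sigma$ refinement of Lemma~\ref{uapprox1} is required.
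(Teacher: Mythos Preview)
Your proof is correct and takes essentially the same approach as the paper's own proof: both fix finitely many interlacing points between the zeros $\sigma_n$ of the model difference $\bar u(\sigma,1)-\gamma\sigma^{-\theta}$, then use Lemma~\ref{uapprox1} (via \eqref{approx} and the scaling \eqref{ODE0rem}) together with Lemma~\ref{u0asypro} to transfer the alternating signs to $u(\tau_j\zeta^{-1/\theta},\zeta)-u^*(\tau_j\zeta^{-1/\theta})$ for large $\zeta$, obtaining the required number of zeros by the intermediate value theorem. The only differences are cosmetic (your $\tau_j$ play the role of the paper's $\sigma'_n$, and you spell out explicitly the invocation of $r_0(\zeta)\ge\Sigma\zeta^{-1/\theta}$ and the containment in $(0,\rho)$).
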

\begin{proof}
  We fix $\sigma'_n\in(\sigma_{n-1},\sigma_n)$ for $n\in\Z_{\ge 1}$. It follows from Lemma~\ref{u0asypro} and \eqref{approx} that
  \[
    u(\sigma\zeta^{-1/\theta},\zeta)-u^*(\sigma\zeta^{-1/\theta})=\overline{u}(\sigma\zeta^{-1/\theta},\zeta)-\gamma\sigma^{-\theta}\zeta+o(\zeta)=\zeta(\overline{u}(\sigma,1)-\gamma\sigma^{-\theta}+o(1))
  \]
  as $\zeta\to+\infty$ for each $\sigma>0$. This together with \eqref{ubarintersect} implies that there are $0<\overline{\zeta}_1<\overline{\zeta}_2<\ldots$ such that
  \[
  (-1)^nu(\sigma'_n\zeta^{-1/\theta},\zeta)<(-1)^nu^*(\sigma'_n\zeta^{-1/\theta})
  \]
  for all $n\in\Z_{\ge 1}$ and $\zeta>\overline{\zeta}_n$. By the intermediate value theorem, for all $n\in\Z_{\ge 1}$, $\zeta>\overline{\zeta}_n$, and $k\in\{1,\ldots,n\}$, there is $r^\zeta_k\in(\sigma'_{k-1}\zeta^{-1/\theta},\sigma'_k\zeta^{-1/\theta})$ such that $u(r^\zeta_k,\zeta)=u^*(r^\zeta_k)$. This implies that
  \[
  \calZ_{(0,\rho)}[u(\cdot,\zeta)-u^*]\ge \max\{k:\sigma'_k\zeta^{-1/\theta}\le\rho\}
  \]
  for all $\zeta>\overline{\zeta}_n$. In particular,
  \[
  \calZ_{(0,\rho)}[u(\cdot,\zeta)-u^*]\ge n\ \textrm{if $\zeta>\max\{(\rho^{-1}\sigma_n)^{\theta},\overline{\zeta}_n\}$}
  \]
  for all $n\in\Z_{\ge 1}$. The proof of Theorem~\ref{inftyintersect} is complete.
\end{proof}

\section{Proof of Theorems~\ref{S1T1} and \ref{S1T2}}
\begin{proof}[Proof of Theorem~\ref{S1T1}]
Let $\{u_j\}_{j=0}^{\infty}$ be bounded solutions of \eqref{ODE} such that $\left\|u_j\right\|_{\infty}\to\infty$ as $j\to\infty$.
It follows from Theorem~\ref{convtosing} that there exists a solution $u^*(r)\in C^2(0,r^*]$ of \eqref{ODE} with some $r^*>0$ such that $u^*(r)\to\infty$ as $r\to 0$ and
$$
u_j\to u^*\ \ \textrm{in}\ \ C^2_{loc}(0,r^*]\ \ \textrm{as}\ \ j\to\infty.
$$
Since every $u_j(r)$ is defined for $r>0$, we can extend a domain of the singular solution $u^*(r)$ as follows.
Because of the continuous dependence of the solutions to ODEs on initial values, we see that $u_j$ converges to some $U$ in $C^2_{\textrm{loc}}[r^*,\infty)$, where $U$ is a solution to the problem
\[
\begin{cases}
  U''+\frac{N-1}{r}U'+K(r)\max\{U,0\}^p+\mu f(r)=0\ \textrm{for $r\in(r^*,\infty)$},\\
  U(r^*)=u^*(r^*),\ U'(r^*)=u^{*\prime}(r^*).
\end{cases}
\]
We define $u^*\equiv U$ on $(r^*,\infty)$. 

We prove the assertion of Theorem~\ref{S1T1} by contradiction.
Suppose the contrary, {\it i.e.,} $u^*(r)$ has a finite first positive zero $r_0\in (0,\infty)$.
By Hopf's lemma we see that $u^{*\prime}(r_0)<0$.
Therefore, there exists a small $\e>0$ such that $u^*(r_0+\e)<0$.
This is a contradiction, because $u_j(r_0+\e)\to u^*(r_0+\e)$ as $j\to\infty$ and $u_j(r_0+\e)>0$.
The proof is complete.
\end{proof}

\begin{proof}[Proof of Theorem~\ref{S1T2}]
We prove the theorem by contradiction.
Suppose the contrary, {\it i.e.,}
\begin{equation}\label{T2PE0}
\textrm{there exists $\zeta_0>0$ such that}\ 
\left\|u\right\|_{\infty}<\zeta_0\ \ \textrm{for every bounded solution $u$ of \eqref{ODE}.}
\end{equation}
We see that $u(r,\zeta)$ has a finite first positive zero for every $\zeta\ge\zeta_0+1$, {\it i.e.,}
\[
r_0(\zeta)<\infty.
\]
We extend a domain of the solution $u(r,\zeta)$ by defining it as the solution to the initial value problem \eqref{S2E0}.
The solution $u(r,\zeta)$ is a $C^1$ function in $(r,\zeta)$.
By Hopf's lemma we see that $u_r(r_0(\zeta),\zeta)<0$.
It follows from the implicit function theorem that the first positive zero $r_0(\zeta)$ is a $C^1$-function of $\zeta$.
Since $r_0(\zeta)<\infty$ for $\zeta\ge\zeta_0+1$, $r_0(\zeta)$ is a $C^1$-function defined on $(\zeta_0+1,\infty)$.

We consider the intersection number
\[
\calZ_{(0,r_0(\zeta))}\left[u^*(\,\cdot\,)-u(\,\cdot\,,\zeta)\right].
\]
Each zero of $u^*(r)-u(r,\zeta)$ is nondegenerate, because of the uniqueness of a solution for an ODE of the second order.
We show by contradiction that the zero set does not have an accumulation point.
Suppose that $r_*\in (0,\infty)$ is an accumulation point of the zero set of $u^*(r)-u(r,\zeta)$.
Then there exists $\{r_k\}_{k=1}^{\infty}\subset (0,\infty)$ such that $u^*(r_k)-u(r_k,\zeta)=0$ and that one of the following holds:
\begin{align*}
\textrm{(i)}\ & r_1<r_2<\cdots <r_k<\cdots\ \ \textrm{and}\ \ r_k\to r_*,\\
\textrm{(ii)}\ & r_1>r_2>\cdots >r_k>\cdots\ \ \textrm{and}\ \ r_k\to r_*.
\end{align*}
Then, by Rolle's theorem, there exists $\{s_k\}_{k=0}^{\infty}\subset (0,\infty)$ such that $s_k$ is in between $r_k$ and $r_{k+1}$, $u^{*\prime}(s_k)-u_r(s_k,\zeta)=0$ for all $k\ge 0$ and $s_k\to r_*$ as $k\to\infty$.
Since $u^*(r)$ and $u(r,\zeta)$ are $C^1$-functions of $r$, we have
\[
u^*(r_*)-u(r_*,\zeta)=0\ \ \textrm{and}\ \ u^{*\prime}(r_*)-u_r(r_*,\zeta)=0.
\]
It follows from the uniqueness of a solution for an ODE of the second order that $u^*(r)\equiv u(r,\zeta)$, which is a contradiction.
Thus, the zero set does not have an accumulation point.
Since $r_0(\zeta)<\infty$ for each fixed $\zeta\ge\zeta_0+1$, we see that
\[
\calZ_{(0,r_0(\zeta))}\left[u^*(\,\cdot\,)-u(\,\cdot\,,\zeta)\right]<\infty.
\]

Because of Theorem~\ref{convtosing}, there exists $\rho>0$ such that
\[
u(r,\zeta)\to u^*(r)\ \ \textrm{in}\ \ C^2_{loc}(0,\rho]\ \ \textrm{as}\ \ \zeta\to\infty.
\]
This indicates that there exists $\zeta_1\in (\zeta_0+1,\infty)$ such that $r_0(\zeta)>\rho$ for all $\zeta\ge\zeta_1$.
Let
\[
m_1:=\calZ_{(0,r_0(\zeta_1))}\left[u^*(\,\cdot\,)-u(\,\cdot\,,\zeta_1)\right].
\]
By Theorem~\ref{inftyintersect} we see that
\[
\calZ_{(0,\rho)}\left[u^*(\,\cdot\,)-u(\,\cdot\,,\zeta)\right]\to\infty
\ \ \textrm{as}\ \ \zeta\to\infty.
\]
Therefore, there exists $\zeta_2\in (\zeta_1,\infty)$ such that
\[
m_2:=\calZ_{(0,\rho)}\left[u^*(\,\cdot\,)-u(\,\cdot\,,\zeta_2)\right]>m_1.
\]
On the other hand, we have
\begin{equation}\label{T2PE1}
\lim_{r\to 0}\left(u^*(r)-u(r,\zeta)\right)>0\ \ \textrm{and}\ u^*(r_0(\zeta))-u(r_0(\zeta),\zeta)=u^*(r_0(\zeta))>0
\ \textrm{for}\ \zeta\ge\zeta_0+1.
\end{equation}
Since each zero of $u^*(r)-u(r,\zeta)$ is nondegenerate, each zero depends continuously on $\zeta$.
Moreover, a zero does not split or several zeros do not merge into one zero.
Another zero does not come from the boundary of the interval $(0,r_0(\zeta))$, because of (\ref{T2PE1}).
Since a zero number is finite for each fixed $\zeta$, $\calZ_{(0,r_0(\zeta))}\left[u^*(\,\cdot\,)-u(\,\cdot\,,\zeta)\right]$ does not change in a small neighborhood of $\zeta$.

Let
\[
\zeta_3:=\inf\{\zeta>\zeta_1: \calZ_{(0,r_0(\zeta))}\left[u^*(\,\cdot\,)-u(\,\cdot\,,\zeta)\right]=m_2\}.
\]
Because of the definition of $\zeta_3$, $\calZ_{(0,r_0(\zeta))}\left[u^*(\,\cdot\,)-u(\,\cdot\,,\zeta)\right]$ is not constant for $\zeta$ in any small neighborhood of $\zeta_3$.
In that case $r_0(\zeta_3)=\infty$, and hence $u(r,\zeta_3)$ is a bounded solution, which contradicts \eqref{T2PE0}.
Note that if $r_0(\zeta_3)<\infty$, then $\calZ_{(0,r_0(\zeta))}\left[u^*(\,\cdot\,)-u(\,\cdot\,,\zeta)\right]$ is constant in a neighborhood of $\zeta_3$, which contradicts the definition of $\zeta_3$.
The assertion (\ref{T2PE0}) does not hold.
Since a set of a bounded solutions of \eqref{ODE} is not uniformly bounded, the conclusion of the theorem holds.
The proof is complete.
\end{proof}

\section{Proof of Theorem~\ref{S1T3}}
Let $u^*_\mu$ and $\calP_r$ be as in Theorem~\ref{singexists}. We also define
\begin{gather*}
\calP_\infty:=\bigcap_{r>0}\calP_r=\{\mu\in\R: u^*_\mu(r)>0\ \textrm{for all $r>0$}\},\\
\calF:=\{\mu\in\calP_\infty\cap[0,\infty): u^*_\mu\ \textrm{is fast-decay}\},\\
\calS:=\{\mu\in\calP_\infty\cap[0,\infty):u^*_\mu\ \textrm{is slow-decey}\},\\
\calB:=[0,\infty)\setminus\calP_{\infty}.
\end{gather*}
As we will see in the proof of Theorem~\ref{S1T3}, a slow-decay solution satisfies $u(r)=O(r^{-\frac{2+\beta}{p-1}})$.
A fast-decay solution satisfies $u(r)=O(r^{2-N})$.
Fast-decay solutions always decay faster than slow-decay solutions, otherwise $N-2\ge (2+\beta)/(p-1)$ and $(N+2+2\beta)/(N-2)<p\le (N+\beta)/(N-2)$, which contradicts $\beta>-2$.

Note that, if we fix a constant $\mu_0>0$, for $\mu\in[-\mu_0,\mu_0]$, all constants within the estimates in Sections 2--5 can be taken independently of $\mu$.
\begin{lemma}\label{mubound}
Assume the same conditions as in Theorem~\ref{S1T3}. There exists $\mu^*>0$ such that $\mu\notin\calP_\infty$ if $\mu\ge\mu^*$. 
\end{lemma}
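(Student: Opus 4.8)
The plan is to show that for $\mu$ large enough the singular solution $u^*_\mu$ of \eqref{ODE'} must vanish at some finite radius, so that $\mu\notin\calP_\infty$. The key mechanism is that $\mu f(r)$ acts as a positive forcing term that drives $u$ downward, and by choosing $\mu$ large we can force $u^*_\mu$ below zero before it has a chance to recover. First I would fix a reference radius, say $r=1$, and a constant $\mu_0>0$; by the remark preceding the lemma, all constants in the a priori estimates of Sections~2--5 (in particular the constant $\gamma$ in the boundary behavior $r^\theta u^*_\mu(r)\to\gamma$, and the constants in Theorem~\ref{singexists}) may be taken independently of $\mu$ on $[-\mu_0,\mu_0]$, so I will work with $\mu\le\mu_0$ for the local-near-zero part and then let $\mu_0$ itself be large. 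Actually it is cleaner to argue directly: suppose $u^*_\mu>0$ on $(0,1]$, i.e. $\mu\in\calP_1$, and derive a bound $\mu\le\mu^*$.

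The main step is an integral identity. Multiplying the equation in \eqref{ODE'} by $r^{N-1}$ and integrating over $(\e,1)$ gives
\[
r^{N-1}u^{*\prime}_\mu(r)\Big|_{\e}^{1}=-\int_{\e}^{1}s^{N-1}\bigl(K(s)u^*_\mu(s)^p+\mu f(s)\bigr)\,ds.
\]
As $\e\to 0$, since $u^*_\mu(s)\sim\gamma s^{-\theta}$ with $\theta=(2+\alpha)/(p-1)$ and $K(s)=O(s^\alpha)$, one checks $s^{N-1}K(s)u^*_\mu(s)^p$ is integrable near $0$ exactly when $p>p_S(\alpha)$ (this is where the hypothesis $p>\max\{p_S(\alpha),p_S(\beta)\}$ enters), and $s^{N-1}f(s)$ is integrable by \eqref{f0}; moreover $\e^{N-1}u^{*\prime}_\mu(\e)\to 0$ by the asymptotics of $u^*_\mu$ (differentiating \eqref{S2L3E0}, or using that $\e^{1+\theta}u^{*\prime}_\mu(\e)\to-\theta\gamma$ and $N-2>\theta$). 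Hence
\[
u^{*\prime}_\mu(1)=-\int_{0}^{1}s^{N-1}\bigl(K(s)u^*_\mu(s)^p+\mu f(s)\bigr)\,ds\le -\mu\int_{0}^{1}s^{N-1}f(s)\,ds=:-\mu c_f,
\]
where $c_f>0$ since $f$ is nonnegative and nontrivial (if $f\equiv 0$ near $0$ and on $(0,1)$ one adjusts the reference radius to a point where $f$ is not a.e. zero; by \eqref{finfty2} and nontriviality such a radius exists, and the argument is translation-invariant in the radius). Similarly, integrating from $1$ to $r>1$ and using $K,u^*_\mu\ge 0$,
\[
r^{N-1}u^{*\prime}_\mu(r)\le u^{*\prime}_\mu(1)\le -\mu c_f\quad\text{for all }r>1,
\]
so $u^{*\prime}_\mu(r)\le -\mu c_f r^{1-N}$, and integrating once more,
\[
u^*_\mu(r)\le u^*_\mu(1)-\mu c_f\int_1^r s^{1-N}\,ds=u^*_\mu(1)-\frac{\mu c_f}{N-2}\bigl(1-r^{2-N}\bigr)\le u^*_\mu(1)-\frac{\mu c_f}{N-2}\bigl(1-2^{2-N}\bigr)
\]
for $r\ge 2$. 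It remains to bound $u^*_\mu(1)$ from above independently of $\mu$. For this I would invoke Theorem~\ref{singexists}: the map $\mu\mapsto u^*_\mu(1)$ is $C^1$ (indeed the near-zero construction gives $z^*_\mu=w^*_\mu-\gamma$ small in $\,{}_0C$, and extending by the ODE flow to $r=1$ keeps $u^*_\mu(1)$ continuous in $\mu$); but continuity alone is not enough for a uniform bound over all $\mu\ge 0$. The right way: since $u^*_\mu\le u^*_{-\mu_0}$ pointwise is false in general, instead compare $u^*_\mu$ with the solution of the $\mu=0$ problem on a small interval. Concretely, on $(0,1]$ the forcing $-\mu f\le 0$ makes $u^*_\mu$ a subsolution relative to $u^*_0$ adjusted — cleaner still, from the identity above $u^*_\mu(1)=\gamma+\int$-type expression: write $u^*_\mu(1)=\lim_{\e\to 0}\bigl(\e^{N-2}\text{-corrected}\bigr)$... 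The simplest honest route is: $r^{N-1}u^{*\prime}_\mu(r)$ is decreasing, equals $0$ in the limit at $0$, hence $u^{*\prime}_\mu<0$ on $(0,1]$, hence $u^*_\mu$ is decreasing there, hence $u^*_\mu(1)\le \lim_{\e\to0}u^*_\mu(\e)\cdot$(no, that's $+\infty$)... so instead bound $u^*_\mu(1)\le u^*_\mu(r)$ for $r\le 1$ and use that on a fixed small interval $(0,\rho_0]$ Lemma~\ref{uapprox2}-type estimates give $u^*_\mu(\rho_0)\le 2\gamma\rho_0^{-\theta}$ uniformly for $\mu\le\mu_0$; then $u^*_\mu(1)\le u^*_\mu(\rho_0)\le 2\gamma\rho_0^{-\theta}=:M$, uniformly in $\mu\in[0,\mu_0]$.

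Combining, for $\mu\in[0,\mu_0]$ and $r\ge 2$ we get $u^*_\mu(r)\le M-\frac{\mu c_f}{N-2}(1-2^{2-N})$, which is $<0$ once $\mu> \mu^*:=\frac{M(N-2)}{c_f(1-2^{2-N})}$; so taking $\mu_0$ to be any number $>\mu^*$ we conclude $u^*_{\mu}$ vanishes before reaching $r=2$, i.e. $\mu\notin\calP_\infty$, for all $\mu\ge\mu^*$. \textbf{The main obstacle} is making the bound on $u^*_\mu(1)$ (equivalently, on $u^*_\mu$ at some fixed positive radius) genuinely uniform in $\mu$: one must either use the $\mu$-independence of constants asserted in the remark before the lemma applied to the near-zero trapping estimate (Lemma~\ref{uapprox2} / Theorem~\ref{singexists}), or run a direct comparison on a fixed small interval $(0,\rho_0]$ where the $\mu f$ term contributes only $\mu\int_0^{\rho_0}sf\,ds\le\mu_0\int_0^1 sf\,ds$, a fixed constant. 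Everything else is elementary integration of the radial ODE together with the nontriviality of $f$ to ensure $c_f>0$.
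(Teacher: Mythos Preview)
The paper's own proof is a one-line citation to \cite{IK24}, so your direct argument is a different, self-contained route. The overall mechanism is correct: from $(r^{N-1}u^{*\prime}_\mu)'=-r^{N-1}(Ku^{*p}_\mu+\mu f)\le 0$ together with $r^{N-1}u^{*\prime}_\mu(r)\to 0$ as $r\to 0$ (this is \eqref{S2L1E2}) you get $u^{*\prime}_\mu(r)\le -\mu c_f\, r^{1-N}$ for $r\ge R$, where $c_f:=\int_0^R s^{N-1}f\,ds>0$ for suitable $R$; integrating then forces $u^*_\mu$ below zero once $\mu$ exceeds a constant times $u^*_\mu(R)$.

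The gap is precisely where you put it, and neither of your proposed resolutions closes it. Both are circular: the remark before the lemma gives constants uniform only on a \emph{prescribed} interval $[-\mu_0,\mu_0]$, and bounding the $\mu f$ contribution by $\mu_0\int_0^1 sf$ still carries $\mu_0$. Either way you end up with $M=M(\mu_0)$ and then need $\mu_0>\mu^*=C\,M(\mu_0)$, with no control on how $M$ grows in $\mu_0$. The correct fix is to invoke Lemma~\ref{S2L1}(i) rather than Lemma~\ref{uapprox2} or Theorem~\ref{singexists}: its proof obtains \eqref{asy1} by \emph{discarding} the nonnegative term $\mu f$ (it uses only $-(r^{N-1}u')'\ge r^{N-1}K(r)u^p$), so the bound $r^\theta u^*_\mu(r)\le C_0$ on $(0,r_0)$ depends only on $N,p,k_0,\alpha$ and is genuinely independent of $\mu\ge 0$. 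Combined with the monotonicity $u^{*\prime}_\mu<0$ established in the same proof, this yields a $\mu$-free bound $u^*_\mu(R)\le M$, and your integration argument then goes through without circularity.
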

\begin{proof}
This is an immediate consequence of \cite[Theorem 1.1]{IK24}.
\end{proof}

\begin{lemma}\label{SFsollimit}
  Assume the same conditions as in Theorem~\ref{S1T3}. For all $\mu\in\calF$, the limit
  \[
  \eta(\mu):=\lim_{r\to\infty}r^{N-2}u^*_\mu(r)
  \]
  exists and satisfies
  \begin{align}
    u^{*\prime}_\mu(r)&=r^{1-N}\left\{-(N-2)\eta(\mu)+\int_r^\infty s^{N-1}(K(s)u^{*}_\mu(s)^p+\mu f(s))\,ds\right\}, \label{u'inftyformula}\\
    u^*_\mu(r)&=r^{2-N}\eta(\mu)-\frac{1}{N-2}\int_r^\infty (r^{2-N}-t^{2-N})t^{N-1}(K(t)u^{*}_\mu(t)^p+\mu f(t))\,dt.\label{uinftyformula}
  \end{align}
  Furthermore, the function $r^{N-2}u^*_\mu(r)$ is strictly increasing in $r$.
\end{lemma}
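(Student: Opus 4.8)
The plan is to put \eqref{ODE} in divergence form and combine the resulting monotonicity with the integrability at infinity that fast decay supplies. Set $g_\mu(r):=K(r)u^*_\mu(r)^p+\mu f(r)\ge 0$; then \eqref{ODE} reads $(r^{N-1}u^{*\prime}_\mu)'=-r^{N-1}g_\mu(r)\le 0$, so $r\mapsto r^{N-1}u^{*\prime}_\mu(r)$ is non-increasing. The key estimate is $\int_1^\infty s^{N-1}g_\mu(s)\,ds<\infty$: the term with $f$ is finite by \eqref{finfty}, while since $\mu\in\calF$ the solution is fast-decay, so $u^*_\mu(s)\le Cs^{2-N}$ for large $s$, whence by \eqref{Kinfty} one has $s^{N-1}K(s)u^*_\mu(s)^p\le Cs^{N-1+\beta-(N-2)p}$ for large $s$ with $N-1+\beta-(N-2)p<-1$, because $p>p_S(\beta)=\frac{N+2+2\beta}{N-2}>\frac{N+\beta}{N-2}$ (using $\beta>-2$). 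Consequently $r^{N-1}u^{*\prime}_\mu(r)$ is non-increasing and bounded below, so $\ell:=\lim_{r\to\infty}r^{N-1}u^{*\prime}_\mu(r)$ exists in $\R$; integrating $(r^{N-1}u^{*\prime}_\mu)'=-r^{N-1}g_\mu$ over $(r,R)$ and letting $R\to\infty$ yields
\[
r^{N-1}u^{*\prime}_\mu(r)=\ell+\int_r^\infty s^{N-1}g_\mu(s)\,ds\qquad\text{for all }r>0.
\]

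Next I would identify $\ell=-(N-2)\eta(\mu)$. Since $N\ge 3$, fast decay gives $u^*_\mu(r)\to 0$, and because $s^{N-1}u^{*\prime}_\mu(s)$ is bounded the integral $\int_r^\infty u^{*\prime}_\mu(s)\,ds$ converges absolutely, so $u^*_\mu(r)=-\int_r^\infty u^{*\prime}_\mu(s)\,ds$. Writing $s^{N-1}u^{*\prime}_\mu(s)=\ell+\varepsilon(s)$, where $\varepsilon(s)\ge 0$ is non-increasing with $\varepsilon(s)\to 0$, one gets $u^*_\mu(r)=\frac{-\ell}{N-2}r^{2-N}-\int_r^\infty s^{1-N}\varepsilon(s)\,ds$, and the last term is at most $\frac{\varepsilon(r)}{N-2}r^{2-N}=o(r^{2-N})$. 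Hence $\eta(\mu):=\lim_{r\to\infty}r^{N-2}u^*_\mu(r)$ exists and equals $-\ell/(N-2)$. Substituting $\ell=-(N-2)\eta(\mu)$ into the displayed formula for $r^{N-1}u^{*\prime}_\mu(r)$ gives \eqref{u'inftyformula}; feeding \eqref{u'inftyformula} into $u^*_\mu(r)=-\int_r^\infty u^{*\prime}_\mu(s)\,ds$, interchanging the order of integration (legitimate since $g_\mu\ge 0$), and using $\int_r^t s^{1-N}\,ds=\frac{r^{2-N}-t^{2-N}}{N-2}$ gives \eqref{uinftyformula}.

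For the final assertion, set $v(r):=r^{N-2}u^*_\mu(r)$ and $\phi(r):=(N-2)u^*_\mu(r)+ru^{*\prime}_\mu(r)$, so that $v'(r)=r^{N-3}\phi(r)$. Differentiating $\phi$ and using \eqref{ODE} gives $\phi'(r)=(N-1)u^{*\prime}_\mu(r)+r(u^*_\mu)''(r)=-rg_\mu(r)<0$ (strict because $K>0$ and $u^*_\mu>0$), while $\phi(r)\to 0$ as $r\to\infty$ since $u^*_\mu(r)\to 0$ and $ru^{*\prime}_\mu(r)=r^{2-N}\cdot r^{N-1}u^{*\prime}_\mu(r)\to 0$. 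Thus $\phi$ is strictly decreasing to $0$, hence $\phi>0$ on $(0,\infty)$, so $v'(r)=r^{N-3}\phi(r)>0$ and $r^{N-2}u^*_\mu(r)$ is strictly increasing. (Equivalently, from \eqref{u'inftyformula}--\eqref{uinftyformula} one checks $\phi(r)=\int_r^\infty s g_\mu(s)\,ds$, but the differential argument is cleaner.)

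I do not expect a serious obstacle: the proof is essentially elementary manipulation of the equation in divergence form together with bookkeeping over convergent integrals. The one genuinely delicate point is the integrability estimate $\int_1^\infty s^{N-1}K(s)u^*_\mu(s)^p\,ds<\infty$, where the fast-decay bound $u^*_\mu(s)\lesssim s^{2-N}$, the growth condition \eqref{Kinfty}, and the supercriticality $p>p_S(\beta)$ must all be combined correctly; without it one cannot even conclude that $\ell$ is finite. The argument also covers the harmless borderline case $\eta(\mu)=0$ (i.e.\ $\ell=0$) with no modification, since the $\varepsilon(s)$-estimate above does not use $\ell\neq 0$.
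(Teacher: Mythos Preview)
Your proof is correct and follows essentially the same approach as the paper: put the equation in divergence form, use fast decay together with $p>p_S(\beta)>\frac{N+\beta}{N-2}$ to secure $\int^\infty s^{N-1}g_\mu(s)\,ds<\infty$, extract the limit, and then read off the integral formulas and the positivity of $(r^{N-2}u^*_\mu)'$. Your route to $\eta(\mu)$ via the monotone limit $\ell=\lim_{r\to\infty}r^{N-1}u^{*\prime}_\mu(r)$ is a bit more direct than the paper's algebraic manipulation (the paper instead derives an identity in $\rho$ whose right-hand side carries a factor $\rho^{N-2}$ and concludes the bracket vanishes), and for monotonicity the paper substitutes \eqref{u'inftyformula}--\eqref{uinftyformula} to get $(r^{N-2}u^*_\mu)'=r^{N-3}\int_r^\infty t\,g_\mu(t)\,dt$, which is exactly the integral form $\phi(r)=\int_r^\infty s\,g_\mu(s)\,ds$ you mention parenthetically.
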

  \begin{proof}
    We set $H(r):=K(r)u^{*}_\mu(r)^p+\mu f(r)$. By $u^*_\mu(r)=O(r^{2-N})$ as $r\to\infty$, \eqref{finfty}, and the fact that $p>(N+\beta)/(N-2)$, we see that, as $r\to\infty$,
    \begin{align}
    \int_r^\infty s^{N-1}H(s)\,ds &\le C\int_r^\infty (s^{N-1+\beta+p(2-N)}+s^{N-1}f(s))\,ds=o(1),\label{S6L2E1}\\
    \int_r^\infty sH(s)\,ds &=o(r^{2-N}).\label{S6L2E2}
    \end{align}
It follows from \eqref{ODE} that
    \begin{gather}
      \rho^{N-1}u^{*\prime}_\mu(\rho)=r^{N-1}u^{*\prime}_\mu(r)-\int_r^\rho s^{N-1}H(s)\,ds,\nonumber\\
      u^*_\mu(\rho)=u^*_\mu(r)+\frac{1}{N-2}(r-\rho^{2-N}r^{N-1})u^{*\prime}_\mu(r)-\frac{1}{N-2}\int_r^\rho (\rho^{2-N}-s^{2-N})s^{N-1}H(s)\,ds
        \label{uinftyformula2}
    \end{gather}
for all $r<\rho$.
Because of \eqref{S6L2E2}, \eqref{uinftyformula2} is equivalent to
    \begin{multline}\label{S6L2E3}
      (N-2)\rho^{N-2}u^*_\mu(\rho)+r^{N-1}u^{*\prime}_\mu(r)+\int_r^\rho s^{N-1}H(s)\,ds+\rho^{N-2}\int_\rho^\infty sH(s)\,ds\\
=\rho^{N-2}\left\{(N-2)u^*_\mu(r)+ru^{*\prime}_\mu(r)+\int_r^\infty sH(s)\,ds\right\}.
\end{multline}
By \eqref{S6L2E1} and \eqref{S6L2E2} we see that the LHS of \eqref{S6L2E3} is bounded as $\rho\to\infty$.
Hence by \eqref{S6L2E3} we deduce that
     \[
     (N-2)u^*_\mu(r)+ru^{*\prime}_\mu(r)+\int_r^\infty sH(s)\,ds=0,
     \]
and hence by \eqref{S6L2E3} again we have
\begin{equation}\label{S6L2E4}
     (N-2)\rho^{N-2}u^*_\mu(\rho)+r^{N-1}u^{*\prime}_\mu(r)+\int_r^\rho s^{N-1}H(s)\,ds+\rho^{N-2}\int_\rho^\infty sH(s)\,ds=0
\end{equation}
     for all $r<\rho$.
Letting $\rho\to\infty$ in \eqref{S6L2E4}, we obtain
     \[
     \lim_{\rho\to\infty}\rho^{N-2}u^*_\mu(\rho)=\frac{1}{N-2}\left(-r^{N-1}u^*_\mu(r)-\int_r^\infty s^{N-1}H(s)\,ds\right),
     \]
     which shows the existence of $\eta(\mu)$ and \eqref{u'inftyformula}.
The identity \eqref{uinftyformula} is obtained by integrating \eqref{u'inftyformula} over $(r,\infty)$.

     We finally observe from \eqref{u'inftyformula} and \eqref{uinftyformula} that
     \[
     (r^{N-2}u^*_\mu(r))'=r^{N-2}u^{*\prime}_\mu(r)+(N-2)r^{N-3}u^*_\mu(r)=r^{N-3}\int_r^\infty tH(t)\,dt>0,
     \]
and hence $r^{N-2}u^*_{\mu}(r)$ is strictly increasing in $r$.
\end{proof}

\begin{lemma}\label{vbarhittingzero}
  Assume the same conditions as in Theorem~\ref{S1T3}. There exists a unique solution $\overline{v}(\,\cdot\,,1)\in C^2(0,\infty)$ to the problem
  \begin{equation}\label{vbar1}
\bv''+\frac{N-1}{r}\bv'+k_\infty r^{\beta}\max\{\bv^p,0\}=0\ \textrm{for $r>0$},\ \lim_{r\to\infty}r^{N-2}\bv (r)=1.
  \end{equation}
  Furthermore, we have
  \[
  0<\overline{r}:=\sup\{r:\overline{v}(r,1)=0\}<\infty.
  \]
\end{lemma}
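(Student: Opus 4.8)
The plan is to repeat, near $r=\infty$ and for the purely power nonlinearity, the construction of a distinguished solution following Theorem~\ref{singexists}, and then to follow that solution back toward the origin. First I would rewrite the condition at infinity together with the equation as the integral equation
\[
\bv(r)=r^{2-N}-\frac{k_\infty}{N-2}\int_r^\infty(r^{2-N}-t^{2-N})\,t^{N-1+\beta}\max\{\bv(t),0\}^p\,dt,
\]
and solve it by the contraction mapping theorem in the ball $\{v:\sup_{r\ge R}|r^{N-2}v(r)-1|\le\tfrac12\}$ of the weighted space $\{v\in C[R,\infty):\sup_{r\ge R}r^{N-2}|v(r)|<\infty\}$, for $R$ large. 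The only quantitative input is that $p>p_S(\beta)$ forces $p>(N+\beta)/(N-2)$, hence $N-1+\beta+p(2-N)<-1$; so the relevant integrals converge and both the image of the ball and the contraction constant are $O(R^{N+\beta-p(N-2)})\to0$ as $R\to\infty$. This yields a unique $\bv(\cdot,1)\in C^2[R,\infty)$ with $\bv(\cdot,1)>0$ there and $r^{N-2}\bv(r,1)\to1$. Since $v\mapsto\max\{v,0\}^p$ is $C^1$ for $p>1$ the ODE is well posed, so $\bv(\cdot,1)$ extends uniquely backward; uniqueness near $\infty$ then upgrades to uniqueness among $C^2(0,\infty)$ solutions with the prescribed behavior, which settles the first assertion.

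Put $\overline{r}_0:=\inf\{\rho\ge0:\bv(\cdot,1)>0\text{ on }(\rho,\infty)\}$. On $(\overline{r}_0,\infty)$ one may run the computation of Lemma~\ref{SFsollimit} verbatim with $K(r)=k_\infty r^\beta$ and $\mu=0$ (only positivity and fast decay near $\infty$ are used), obtaining the analogues of \eqref{u'inftyformula}--\eqref{uinftyformula} and $(r^{N-2}\bv(r,1))'=k_\infty r^{N-3}\int_r^\infty t^{1+\beta}\bv(t,1)^p\,dt>0$. Hence $r^{N-2}\bv(r,1)$ is strictly increasing on $(\overline{r}_0,\infty)$ and $<1$ there, so $0<\bv(r,1)<r^{2-N}$; in particular $\bv(\cdot,1)$ stays bounded as $r\downarrow\overline{r}_0$. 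Therefore, if $\overline{r}_0>0$, continuity forces $\bv(\overline{r}_0,1)=0$, and $\overline{r}=\overline{r}_0\in(0,\infty)$. It remains only to exclude $\overline{r}_0=0$.

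Suppose $\bv(\cdot,1)>0$ on all of $(0,\infty)$. From the analogue of \eqref{u'inftyformula} (with limit $1$), $\phi(r):=r^{N-1}\bv'(r,1)=-(N-2)+k_\infty\int_r^\infty t^{N-1+\beta}\bv(t,1)^p\,dt$ is monotone, and $\phi(0^+)=+\infty$ is impossible (it would make $\bv(\cdot,1)$ increasing near $0$, hence bounded near $0$, hence the defining integral convergent at $0$). So $\bv(\cdot,1)$ is monotone near $0$ and has a limit $\ell\in(0,\infty]$. If $\ell<\infty$, then $\bv(\cdot,1)$ is a regular, globally positive, fast-decay solution of $V''+\tfrac{N-1}{r}V'+k_\infty r^\beta V^p=0$; the change of variables \eqref{wg}--\eqref{weq} (with $\beta$ in place of $\alpha$, $L\equiv k_\infty$, $\mu=0$) turns it into a homoclinic orbit to the hyperbolic equilibrium $0$ of the damped autonomous equation $w''+a_\beta w'-A_\beta^{p-1}w+k_\infty w^p=0$, with $a_\beta:=N-2-2\theta_\beta>0$, $\theta_\beta:=\tfrac{2+\beta}{p-1}$, $A_\beta^{p-1}:=\theta_\beta(N-2-\theta_\beta)$; this is impossible because the energy $\tfrac12(w')^2-\tfrac{A_\beta^{p-1}}2w^2+\tfrac{k_\infty}{p+1}w^{p+1}$ is nonincreasing, strictly decreasing along a nonconstant orbit, and tends to $0$ at both ends $t\to\pm\infty$. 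If $\ell=\infty$, then $\bv(\cdot,1)$ is a singular solution of the same equation, so by Lemma~\ref{u0asypro} (applied with $\beta,k_\infty$) it satisfies $r^{\theta_\beta}\bv(r,1)\to\gamma_\beta:=k_\infty^{-1/(p-1)}A_\beta$, and by Theorem~\ref{singexists} (same substitution, uniqueness of the singular solution with this leading term) it must coincide with the explicit $\gamma_\beta r^{-\theta_\beta}$; but $r^{N-2}\gamma_\beta r^{-\theta_\beta}=\gamma_\beta r^{\,N-2-\theta_\beta}\to\infty$, contradicting $r^{N-2}\bv(r,1)\to1$. Either way $\overline{r}_0>0$, completing the proof. (Alternatively this step can be done in one stroke by the Pohozaev identity on $(\e,R)$: as $p>p_S(\beta)$, all boundary terms vanish as $\e\downarrow0$ and $R\uparrow\infty$ — in both the bounded and the $\gamma_\beta r^{-\theta_\beta}$ cases — leaving $(\text{negative constant})\int_{\R^N}|x|^\beta\bv^{p+1}\,dx=0$.)

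The construction near $\infty$ and the monotonicity/boundedness on $(\overline{r}_0,\infty)$ are routine variants of material already present. The real obstacle is the exclusion of $\overline{r}_0=0$: one must rule out a globally positive, supercritical, fast-decay solution of the pure-power problem, which requires invoking nonexistence of entire fast-decay solutions (through the Emden--Fowler energy, or Pohozaev) and verifying that all boundary contributions vanish at both ends.
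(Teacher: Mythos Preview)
Your proof is correct, but the paper takes a much shorter route. It applies the Kelvin transform $\tilde v(r')=r^{N-2}\bv(r,1)$, $r'=r^{-1}$, which converts \eqref{vbar1} into the \emph{regular} initial value problem
\[
\tilde v''+\frac{N-1}{r'}\tilde v'+k_\infty(r')^{\tilde\beta}\max\{\tilde v^p,0\}=0,\qquad \tilde v(0)=1,
\]
with $\tilde\beta=(N-2)(p-1)-4-\beta$. The key observation is that $p>p_S(\beta)$ is \emph{equivalent} to $1<p<p_S(\tilde\beta)$, so the transformed problem is subcritical and its regular solution necessarily hits zero at some finite $\tilde r$; then $\overline r=\tilde r^{-1}$. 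Existence, uniqueness, and the zero are thus all inherited from classical subcritical theory in three lines.

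Your approach instead constructs the solution near infinity by contraction in a weighted space, extends backward, and rules out global positivity via the Emden--Fowler energy (for the regular case) or uniqueness of the singular solution (for the unbounded case). This is valid but substantially longer, and requires some care in the case analysis near the origin (for instance, showing $\phi(0^+)\in(0,\infty)$ is incompatible with $\bv>0$, which you pass over quickly). On the other hand, the contraction-near-infinity machinery you use is exactly what the paper develops anyway in Lemma~\ref{SFsolconstraint}, so your route has the merit of not introducing the Kelvin transform as an additional device, and of being self-contained within the supercritical framework of the paper. The paper's version trades that self-containment for brevity by invoking a classical subcritical fact.
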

  \begin{proof}
    To prove the existence of a solution to \eqref{vbar1}, we use the Kelvin transform $\tilde{v}(r')=r^{N-2}\bv(r,1)$ and $r'=r^{-1}$.
    Then the problem \eqref{vbar1} is equivalent to
    \[
      \tilde{v}''+\frac{N-1}{r'}\tilde{v}'+k_\infty (r')^{\tilde{\beta}}\max\{\tilde{v}^p,0\}=0\ \textrm{for $r'>0$},\ \lim_{r'\to 0}\tilde{v}(r')=1,
    \]
    where $\tilde{\beta}=(N-2)(p-1)-4-\beta$.
Note that $p>p_S(\beta)$ is equivalent to $1<p<p_S(\tilde{\beta})$.
Hence, this problem has a unique solution $\tilde{v}$ and $\tilde{v}$ has the first zero $\tilde{r}$. Hence the problem \eqref{vbar1} has a unique solution $\overline{v}(\cdot,1)$ and $\overline{v}(\cdot,1)$ has the last zero $\overline{r}=\tilde{r}^{-1}$.
  \end{proof}
Let
  \[
  \tilde{\theta}:=\frac{2+\beta}{p-1}\ \ \text{and}\ \ \tilde{c}:=N-2-\tilde{\theta}.
  \]
Let $\overline{v}(\,\cdot\,,1)$ be defined in Lemma~\ref{vbarhittingzero}.
We define
  \[
\overline{v}(r,\eta):=\eta^{-\tilde{\theta}/\tilde{c}}\bar{v}(\eta^{-1/\tilde{c}}r,1)
  \]
  for $r>0$, $\eta>0$. Then $\overline{v}(\cdot,\eta)$ is a solution to the problem
  \begin{equation*}
  \bv''(r,\eta)+\frac{N-1}{r}\bv'(r,\eta)+k_\infty r^{\beta}\max\{\bv(r,\eta)^p,0\}=0\ \textrm{for $r>0$},\ \lim_{r\to\infty}r^{N-2}\bv(r,\eta)=\eta.
  \end{equation*}
  Furthermore, $\overline{v}(r,\eta)$ has the last zero $r=\eta^{1/\tilde{c}} \overline{r}$. We also have that
  \begin{equation}\label{vbarformula}
    \overline{v}(r,\eta)=r^{2-N}\eta-\frac{1}{N-2}\int_r^\infty (r^{2-N}-t^{2-N})t^{N-1}k_\infty t^{\beta}\overline{v}(t,\eta)^p\,dt.
  \end{equation}

\begin{lemma}\label{SFsolbound}
  Assume the same conditions as in Theorem~\ref{S1T3}. There exist constants $0<\underline{\eta}<\overline{\eta}$ and $R>0$ such that
  \[
  \underline{\eta}\le r^{N-2}u^*_\mu(r)\le\overline{\eta}
  \]
  for all $\mu\in\calF$ and $r>R$.
\end{lemma}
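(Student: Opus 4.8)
The plan is to use Lemma~\ref{SFsollimit}: for $\mu\in\calF$ the map $r\mapsto r^{N-2}u^*_\mu(r)$ is strictly increasing with a finite limit $\eta(\mu):=\lim_{r\to\infty}r^{N-2}u^*_\mu(r)$, so $r^{N-2}u^*_\mu(r)\le\eta(\mu)$ for all $r>0$, and it suffices to find a radius $R>0$ and constants $0<\underline\eta<\overline\eta$, all independent of $\mu\in\calF$, with $R^{N-2}u^*_\mu(R)\ge\underline\eta$ and $\eta(\mu)\le\overline\eta$. By Lemma~\ref{mubound} we have $\calF\subset[0,\mu^*)$, and, as noted after the statement of Theorem~\ref{S1T3}, all constants in the estimates of Sections~2--5 may then be taken independent of $\mu$ on this range.

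For the lower bound: the contraction--mapping construction of $u^*_\mu$ in Theorem~\ref{singexists} yields, uniformly in $\mu\in[0,\mu^*)$, a radius $R_0>0$ with $u^*_\mu(r)\ge\tfrac{\gamma}{2}r^{-\theta}$ for $0<r\le R_0$ (cf.\ \eqref{eq:u*bddbelow}). Monotonicity of $r^{N-2}u^*_\mu$ then gives $r^{N-2}u^*_\mu(r)\ge R_0^{N-2}u^*_\mu(R_0)\ge\tfrac{\gamma}{2}R_0^{\,c}=:\underline\eta>0$ for all $r\ge R_0$, so one may take $R:=R_0$.

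The upper bound is the heart of the proof. By \eqref{Kinfty} and \eqref{finfty2} one can choose $R_2\ge R_0$ so that $|e^{-\beta t}K(e^t)-k_\infty|$ and $e^{(2+\tilde\theta)t}f(e^t)$, with $\tilde\theta:=(2+\beta)/(p-1)$, are as small as we wish for $t\ge\log R_2$; thus on $[R_2,\infty)$ the equation satisfied by $u^*_\mu$ is a small perturbation of the pure--power equation $v''+\tfrac{N-1}{r}v'+k_\infty r^{\beta}v^p=0$. For $\mu\in\calF$, $u^*_\mu$ is a positive fast--decay solution with $\lim_{r\to\infty}r^{N-2}u^*_\mu(r)=\eta(\mu)$, so on $[R_2,\infty)$ it should remain close to the fast--decay solution $\overline{v}(\,\cdot\,,\eta(\mu))$ of the pure--power equation having the same limit. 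By Lemma~\ref{vbarhittingzero} and the ensuing rescaling $\overline{v}(r,\eta)=\eta^{-\tilde\theta/\tilde{c}}\overline{v}(\eta^{-1/\tilde{c}}r,1)$, the function $\overline{v}(\,\cdot\,,\eta(\mu))$ has last zero at $r=\eta(\mu)^{1/\tilde{c}}\overline{r}$ and is \emph{strictly negative}, with magnitude bounded below on an interval of definite length, just to the left of it. Hence, if $\eta(\mu)^{1/\tilde{c}}\overline{r}$ were much larger than $R_2$, a Gronwall--type (perturbation) estimate on $[R_2,\infty)$ would force $u^*_\mu$ to become negative near $\eta(\mu)^{1/\tilde{c}}\overline{r}$, contradicting $\mu\in\calP_\infty$. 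This gives $\eta(\mu)^{1/\tilde{c}}\overline{r}\le C_0 R_2$ for a constant $C_0$ depending only on $N$, $p$, $\beta$, i.e.\ $\eta(\mu)\le\overline\eta:=(C_0R_2/\overline{r})^{\tilde{c}}$; therefore $r^{N-2}u^*_\mu(r)\le\eta(\mu)\le\overline\eta$ for all $r>0$, and the lemma follows with $R=R_0$.

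The main obstacle is to make the perturbation estimate robust as $\eta(\mu)\to\infty$: a naive linearisation of $u^*_\mu-\overline{v}(\,\cdot\,,\eta(\mu))$ produces a ``potential'' carrying a factor that grows with $\eta(\mu)$, so the Gronwall constant has to be controlled uniformly. The clean way is to pass to the Emden--Fowler variable adapted to infinity, $W(t):=e^{\tilde\theta t}u^*_\mu(e^t)$, which solves $W''+\tilde a W'-\tilde A^{p-1}W+\tilde L(t)W^p+\mu\tilde g(t)=0$ with $\tilde a=N-2-2\tilde\theta>0$ (since $p>p_S(\beta)$), $\tilde A^{p-1}=\tilde\theta\tilde{c}$, $\tilde L(t)\to k_\infty$ and $\tilde g(t)\to0$ as $t\to\infty$. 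Here the origin is a hyperbolic saddle of the limiting autonomous equation with stable eigenvalue $-\tilde{c}$, the scaling in $\eta$ becomes a \emph{time translation}, and $W>0$ together with $W(t)\le\eta(\mu)e^{-\tilde{c}t}\to0$ forces $(W,W')$ onto the one--dimensional (perturbed) stable manifold of the origin, exponentially close to the unique time--translate $W_{\eta(\mu)}(t)=e^{\tilde\theta t}\overline{v}(e^t,\eta(\mu))$ with matching asymptotics. A Gronwall estimate on $[\log R_2,\infty)$ with uniform constants — the coefficient perturbations $|\tilde L-k_\infty|+\mu|\tilde g|$ and the distance to the stable manifold are small uniformly in $\mu\in[0,\mu^*)$ once $R_2$ is large; cf.\ Lemmas~\ref{pertODE}--\ref{linearsolvable} and \cite[Lemma~3.2]{MN20} — then keeps $|W-W_{\eta(\mu)}|\le\varepsilon_1$ there, and since $W_{\eta(\mu)}$ vanishes at $t=\log(\eta(\mu)^{1/\tilde{c}}\overline{r})$ with positive slope (so $W_{\eta(\mu)}\le-c_1$ on a fixed--length interval just below that time), choosing $R_2$ so large that $\varepsilon_1<c_1$ yields $\log(\eta(\mu)^{1/\tilde{c}}\overline{r})\le\log R_2+\delta$ for a fixed $\delta>0$, which is the bound on $\eta(\mu)$ asserted above.
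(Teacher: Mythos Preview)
Your lower bound is exactly the paper's argument (monotonicity of $r^{N-2}u^*_\mu$ from Lemma~\ref{SFsollimit} together with \eqref{eq:u*bddbelow}). For the upper bound you and the paper share the same core idea --- compare $u^*_\mu$ with the fast--decay solution $\overline v(\cdot,\eta(\mu))$ of the pure--power equation and use that $\overline v$ becomes negative near $r=\eta(\mu)^{1/\tilde c}\overline r$ --- but your execution has a gap at the very obstacle you identify.

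You assert that a Gronwall/stable--manifold estimate on $[\log R_2,\infty)$ keeps $|W-W_{\eta(\mu)}|\le\varepsilon_1$ uniformly in $\mu$, citing Lemmas~\ref{pertODE}--\ref{linearsolvable}. Those lemmas require $b>0$, i.e.\ a sink; the linearization of $W$ near the origin is $z''+\tilde a z'-\tilde A^{p-1}z=\cdots$, a \emph{saddle} (eigenvalues $\tilde\theta>0$ and $-\tilde c<0$), so they do not apply. More concretely, integrating the comparison inward from $t=+\infty$ down to $t=\log R_2$ picks up a Gronwall factor of order $\exp\bigl(C\int_{\log R_2}^\infty W_{\eta(\mu)}^{p-1}\,dt\bigr)\sim\exp\bigl(C\eta(\mu)^{p-1}R_2^{-(p-1)\tilde c}\bigr)$, which blows up as $\eta(\mu)\to\infty$; so the claimed uniform bound on all of $[\log R_2,\infty)$ is not available.

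The paper avoids this by working directly with the integral representations \eqref{uinftyformula} and \eqref{vbarformula}: subtracting them and applying Gronwall (after the inversion $s=t^{-1}$) gives
\[
r^{N-2}\bigl|u^*_{\mu_n}(r)-\overline v(r,\eta(\mu_n))\bigr|\le e^{C_2\eta(\mu_n)^{p-1}r^{-(p-1)\tilde c}}\tilde\Phi(r,\eta(\mu_n)).
\]
The key observation is that the exponent is \emph{scale invariant}: evaluated at $r=\sigma_1\eta(\mu_n)^{1/\tilde c}$ it equals $C_2\sigma_1^{-(p-1)\tilde c}$, bounded independently of $n$. At that radius $\tilde\Phi=o(\eta(\mu_n))$ while $r^{N-2}\overline v=\sigma_1^{N-2}\eta(\mu_n)\,\overline v(\sigma_1,1)$ is negative and of exact order $\eta(\mu_n)$, forcing $u^*_{\mu_n}<0$. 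In your Emden--Fowler language this is precisely the statement that, after the shift $s=t-\tilde c^{-1}\log\eta(\mu)$, the estimate is uniform on $[s_0,\infty)$ for any \emph{fixed} $s_0$ (in particular at $s\approx\log\overline r$), but \emph{not} on the $\eta$--dependent interval $[\log R_2-\tilde c^{-1}\log\eta(\mu),\infty)$. Your sketch is salvageable once you restrict the comparison to a fixed $s$--interval rather than to all of $[\log R_2,\infty)$.
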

  \begin{proof}
    We first prove the existence of $R>0$ and $\overline{\eta}>0$ such that
    \begin{equation}\label{SFsolbound1}
      r^{N-2}u^*_\mu(r)\le\overline{\eta}\ \textrm{for all $\mu\in\calF$ and $r>R$}.
    \end{equation}
    Since $r^{N-2}u^*_\mu(r)$ is strictly increasing, it suffices to prove the uniform boundedness of $\eta(\mu)$ for $\mu\in\calF$.
    
    Assume on the contrary that there is a sequence $\{\mu_n\}_{n=1}^\infty\subset\calF$ such that $\lim\limits_{n\to\infty}\eta(\mu_n)=\infty$. By subtracting \eqref{vbarformula} with $\eta=\eta(\mu_n)$ from \eqref{uinftyformula} in Lemma~\ref{SFsollimit}, we see that
    \begin{align*}
    u^*_{\mu_n}(r)-\overline{v}(r,\eta(\mu_n))&=-\frac{1}{N-2}\int_r^\infty(r^{2-N}-t^{2-N})t^{N-1}(K(t)u^*_{\mu_n}(t)^p-k_\infty t^{\beta}\overline{v}(t,\eta(\mu_n))^p)\,dt\\
    &\quad-\frac{1}{N-2}\int_r^\infty (r^{2-N}-t^{2-N})t^{N-1}\mu_n f(t)\,dt.
    \end{align*}
This together with $u^*_{\mu_n}(r),\overline{v}(r,\eta(\mu_n))\le \eta(\mu_n)r^{2-N}$, Lemma~\ref{mubound} and the mean value theorem implies that
    \begin{align*}
      r^{N-2}|u^*_{\mu_n}(r)-\overline{v}(r,\eta(\mu_n))|
      &\le\frac{1}{N-2}\int_r^\infty k_\infty t^{N-1+\beta}|u^*_{\mu_n}(t)^p-\overline{v}(t,\eta(\mu_n))^p|\,dt\\
      &\qquad +\frac{1}{N-2}\int_r^\infty t^{N-1+\beta}|t^{-\beta} K(t)-k_\infty|u^*_{\mu_n}(t)^p\,dt\\
      &\qquad\qquad+\frac{1}{N-2}\int_r^\infty t^{N-1}\mu^* f(t)\,dt\\
      &\le\frac{1}{N-2}\int_r^\infty pk_\infty\eta(\mu_n)^{p-1} t^{N-1+\beta-(N-2)(p-1)}|u^*_{\mu_n}(t)-\overline{v}(t,\eta(\mu_n))|\,dt\\
      &\qquad+\tilde{\Phi}(r,\eta),
    \end{align*}
    where
    \[
    \tilde{\Phi}(r,\eta)=\frac{1}{N-2}\int_r^\infty t^{N-1+\beta}|t^{-\beta} K(t)-k_\infty|u^*_{\mu_n}(t)^p\,dt+\frac{1}{N-2}\int_r^\infty t^{N-1}\mu^* f(t)\,dt.
    \]
Let $\Psi(r):=r^{N-2}\left|u^*_{\mu_n}(r)-\overline{v}(r,\eta(\mu_n))\right|$.
Since $N-1+\beta-(N-2)(p-1)=N-2-(p-1)\tilde{c}-1$, we have
$$
\Psi(r)\le C\int_r^{\infty}\eta(\mu_n)^{p-1}t^{-(p-1)\tilde{c}-1}\Psi(t)dt+\tilde{\Phi}(r,\eta(\mu_n)).
$$
We change the variable $s:=t^{-1}$. Then
$$
\Psi(r)\le C\int_0^{r^{-1}}\eta(\mu_n)^{p-1}s^{(p-1)\tilde{c}-1}\Psi(s^{-1})ds+\tilde{\Phi}(r,\eta(\mu_n)).
$$
Let $\tau:=r^{-1}$ and $\tilde{\Psi}(\tau):=\Psi(r)$. Then
$$
\tilde{\Psi}(\tau)\le C\int_0^{\tau}\eta(\mu_n)^{p-1}s^{(p-1)\tilde{c}-1}\tilde{\Psi}(s)ds+\tilde{\Phi}(\tau^{-1},\eta(\mu_n)).
$$
By the Gronwall inequality, there is $C_2>0$ such that
$$
\tilde{\Psi}(\tau)\le\exp( C_2\eta(\mu_n)^{p-1}\tau^{(p-1)\tilde{c}})\tilde{\Phi}(\tau^{-1},\eta(\mu_n)),
$$
and hence
\begin{equation}\label{inftydif}
r^{N-2}|u^*_{\mu_n}(r)-\overline{v}(r,\eta(\mu_n))|\le e^{C_2\eta(\mu_n)^{p-1}r^{-(p-1)\tilde{c}}}\tilde{\Phi}(r,\eta(\mu_n))
    \end{equation}
    for all $r>0$. Furthermore, it follows from \eqref{Kinfty} and \eqref{finfty} that
    \begin{equation}\label{inftyPhi}
      \begin{aligned}
      \tilde{\Phi}(r,\eta)&=o(1)\eta(\mu_n)^p\int_r^\infty t^{N-1+\beta-(N-2)p}\,dt+o(1)\\
      &=\eta(\mu_n)^pr^{-(p-1)\tilde{c}}o(1)+o(1)\ \textrm{as $r\to\infty$ uniformly in $n\in\Z_{\ge 1}$.}
      \end{aligned}
    \end{equation}
    Let $\sigma_1>0$ be such that $\overline{v}(\sigma_1,1)<0$. Then it follows from \eqref{inftydif} and \eqref{inftyPhi} that
    \begin{align*}
      (\sigma_1\eta(\mu_n)^{1/\tc})^{N-2}|u^*_{\mu_n}(\sigma_1\eta(\mu_n)^{1/\tc})-
      &\overline{v}(\sigma_1\eta(\mu_n)^{1/\tc},\eta(\mu_n))|\\
      &\le C\tilde{\Phi}(\sigma_1\eta(\mu_n)^{1/\tc},\eta(\mu_n))\\
      &=\eta(\mu_n)^p\cdot {\sigma_1}^{-(p-1)\tc}\eta(\mu_n)^{-(p-1)}o(1)+o(1)\\
      &=\eta(\mu_n)o(1)\ \textrm{as $n\to\infty$}.
    \end{align*}
    On the other hand, we have
    \[
    (\sigma_1\eta(\mu_n)^{1/\tc})^{N-2}\overline{v}(\sigma_1\eta(\mu_n)^{1/\tc},\eta(\mu_n))=\sigma_1^{N-2}\eta(\mu_n)\overline{v}(\sigma_1,1).
    \]
    We deduce that
    \[
   (\sigma_1\eta(\mu_n)^{1/\tc})^{N-2}u^*_{\mu_n}(\sigma_1\eta(\mu_n)^{1/\tc})=\eta(\mu_n)(\sigma_1^{N-2}\overline{v}(\sigma_1,1)+o(1))\ \textrm{as $n\to\infty$,} 
    \]
    which contradicts $u^*_{\mu_n}(r)>0$.
Thus, there exist $R>0$ and  $\overline{\eta}>0$ such that \eqref{SFsolbound1} holds.

We next prove the existence of  $\underline{\eta}>0$ such that
    \begin{equation}\label{eq:SFsolbound2}
            r^{N-2}u^*_\mu(r)\ge\underline{\eta}\ \textrm{for all $\mu\in\calF$ and $r>R$}.
    \end{equation}
     Since $\calF\subset[0,\mu^*]$, by \eqref{eq:u*bddbelow} we see that there is a constant $\rho>0$ such that
    \[
    u^*(r)\ge\frac{\gamma}{2}r^{-\theta}\ \textrm{for all}\ r\in(0,\rho]\ \textrm{and}\ \mu\in\calF.
    \]
    Since $r^{N-2}u_{\mu}^*(r)$ is strictly increasing, this implies that
    \[
    r^{N-2}u_\mu^*(r)\ge \frac{\gamma}{2}\rho^{N-2-\theta}\ \textrm{for all}\ r\ge\rho\ \textrm{and}\ \mu\in\calF.
    \]
Thus, there exists $\underline{\eta}>0$ such that \eqref{eq:SFsolbound2} holds.
The proof is complete.
  \end{proof}
  
\begin{lemma}\label{SFsolconstraint}
  Assume the same conditions as in Theorem~\ref{S1T3}. Let $R>0$, $\underline{\eta}$, and $\overline{\eta}$ be as in Lemma~\ref{SFsolbound}. Then there exist $R_1>R$ and a real analytic function $\Xi$ on $(R_1^{2-N}\underline{\eta}/2,2R_1^{2-N}\overline{\eta})\times(-2\mu^*,2\mu^*)$ with the following properties:
  \begin{enumerate}
    \item $\mu^*\notin\calP_{R_1}$,
    \item we have
  \begin{equation}\label{Fconstraint}
  \calF=\{\mu\in\calP_{R_1}\cap[0,\infty): R_1^{N-2}u^*_\mu(R_1)\in(\underline{\eta}/2,2\overline{\eta}),u^{*\prime}_\mu(R_1)=\Xi(u^*_\mu(R_1),\mu)\}.
  \end{equation}
  \end{enumerate}
\end{lemma}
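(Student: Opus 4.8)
The plan is to produce $\Xi$ from the \emph{fast-decay solutions of the far equation} on $(R_1,\infty)$, and to read the identity \eqref{Fconstraint} as the statement that $\mu$ admits a positive fast-decay singular solution of \eqref{ODE'} precisely when the Cauchy data $\bigl(u^*_\mu(R_1),u^{*\prime}_\mu(R_1)\bigr)$ match those of such a far-field solution. Throughout, $R_1>R$ will be chosen large; every smallness requirement below is of the form ``$R_1$ sufficiently large''. For fixed ranges $\eta\in(\underline{\eta}/4,4\overline{\eta})$ and $\mu\in(-2\mu^*,2\mu^*)$ I would first solve, for $R_1$ large, the integral equation
$$
v(r)=r^{2-N}\eta-\frac{1}{N-2}\int_r^\infty (r^{2-N}-t^{2-N})t^{N-1}\bigl(K(t)v(t)^p+\mu f(t)\bigr)\,dt\qquad(r>R_1)
$$
by the contraction mapping theorem in the ball $\{v:\sup_{r>R_1}r^{N-2}|v(r)|\le2\overline{\eta}\}$ of the space with norm $\sup_{r>R_1}r^{N-2}|v(r)|$. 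Both the contraction constant and $\sup_{r>R_1}r^{N-2}\bigl|v(r)-\eta r^{2-N}\bigr|$ are bounded by $C\bigl(R_1^{N+\beta-p(N-2)}+\int_{R_1}^\infty t^{N-1}|f(t)|\,dt\bigr)$, which tends to $0$ as $R_1\to\infty$ since $p>(N+\beta)/(N-2)$ (a consequence of $p>p_S(\beta)$) and \eqref{finfty}. This produces a unique $v_{\eta,\mu}$ with $r^{N-2}v_{\eta,\mu}(r)\to\eta$ and $v_{\eta,\mu}(r)\ge(\eta-o(1))r^{2-N}>0$ on $(R_1,\infty)$, so $v_{\eta,\mu}$ also solves the $K(r)\max\{v,0\}^p$ equation; redoing the fixed point on the open set $\{v:r^{N-2}v(r)>\underline{\eta}/8\}$, where $v\mapsto v^p$ is analytic, the analytic implicit function theorem gives that $(\eta,\mu)\mapsto v_{\eta,\mu}$ is real analytic into the weighted space, hence so are $(\eta,\mu)\mapsto v_{\eta,\mu}(R_1)$ and $(\eta,\mu)\mapsto v'_{\eta,\mu}(R_1)$.

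Next I would invert $\eta\mapsto v_{\eta,\mu}(R_1)$. The derivative $\partial_\eta v_{\eta,\mu}$ solves the linearized integral equation, whence $\partial_\eta v_{\eta,\mu}(R_1)=R_1^{2-N}(1+o(1))$ and $\bigl|v_{\eta,\mu}(R_1)-R_1^{2-N}\eta\bigr|\le R_1^{2-N}o(1)$ as $R_1\to\infty$. Hence, for $R_1$ large, $\eta\mapsto v_{\eta,\mu}(R_1)$ is a strictly increasing real-analytic diffeomorphism of $(\underline{\eta}/4,4\overline{\eta})$ onto an open interval containing $(R_1^{2-N}\underline{\eta}/2,2R_1^{2-N}\overline{\eta})$; the analytic implicit function theorem yields a real-analytic inverse $\calH(\xi,\mu)$ on $(R_1^{2-N}\underline{\eta}/2,2R_1^{2-N}\overline{\eta})\times(-2\mu^*,2\mu^*)$ with values in $(\underline{\eta}/4,4\overline{\eta})$, and I set
$$
\Xi(\xi,\mu):=v'_{\calH(\xi,\mu),\mu}(R_1),
$$
which is real analytic as a composition. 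For (1), since $\mu^*\notin\calP_\infty$ by Lemma~\ref{mubound}, there is $r_0>0$ with $u^*_{\mu^*}(r_0)\le0$; enlarging $R_1$ so that $R_1\ge r_0$ gives $\mu^*\notin\calP_{r_0}\supseteq\calP_{R_1}$.

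Finally I would verify \eqref{Fconstraint}. If $\mu\in\calF$, then $\mu\in[0,\mu^*)\subset(-2\mu^*,2\mu^*)$ by Lemma~\ref{mubound} and $\mu\in\calP_\infty\subseteq\calP_{R_1}$; by Lemma~\ref{SFsollimit}, $\eta(\mu)$ exists, $r^{N-2}u^*_\mu(r)$ is increasing, and \eqref{uinftyformula} shows that $u^*_\mu|_{(R_1,\infty)}$ solves the same integral equation as $v_{\eta(\mu),\mu}$; by Lemma~\ref{SFsolbound}, $\underline{\eta}\le r^{N-2}u^*_\mu(r)\le\overline{\eta}$ for $r>R_1$, so $\eta(\mu)\in[\underline{\eta},\overline{\eta}]$ and $u^*_\mu|_{(R_1,\infty)}$ lies in the contraction ball, whence uniqueness of the fixed point gives $u^*_\mu=v_{\eta(\mu),\mu}$ on $(R_1,\infty)$. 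Therefore $R_1^{N-2}u^*_\mu(R_1)\in[\underline{\eta},\overline{\eta}]\subset(\underline{\eta}/2,2\overline{\eta})$, $\calH(u^*_\mu(R_1),\mu)=\eta(\mu)$, and $u^{*\prime}_\mu(R_1)=v'_{\eta(\mu),\mu}(R_1)=\Xi(u^*_\mu(R_1),\mu)$. Conversely, suppose $\mu\in\calP_{R_1}\cap[0,\infty)$ satisfies $R_1^{N-2}u^*_\mu(R_1)\in(\underline{\eta}/2,2\overline{\eta})$ and $u^{*\prime}_\mu(R_1)=\Xi(u^*_\mu(R_1),\mu)$, so in particular $\mu<2\mu^*$; put $\eta:=\calH(u^*_\mu(R_1),\mu)$, so that $v_{\eta,\mu}(R_1)=u^*_\mu(R_1)$ and $v'_{\eta,\mu}(R_1)=u^{*\prime}_\mu(R_1)$. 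Since both $v_{\eta,\mu}$ (which is positive on $(R_1,\infty)$) and $u^*_\mu$ solve $y''+\frac{N-1}{r}y'+K(r)\max\{y,0\}^p+\mu f(r)=0$ with the same data at $R_1$, uniqueness for this initial value problem forces $u^*_\mu\equiv v_{\eta,\mu}$ on $(R_1,\infty)$; hence $u^*_\mu>0$ on $(0,\infty)$, i.e.\ $\mu\in\calP_\infty$, and $u^*_\mu$ is fast-decay because $r^{N-2}v_{\eta,\mu}(r)\to\eta$, so $\mu\in\calF$.

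The main obstacle is the first part: constructing the far-field solutions $v_{\eta,\mu}$ by a weighted-space fixed point uniformly in $(\eta,\mu)$ over the chosen ranges, establishing their joint real-analyticity, and obtaining the uniform lower bound of order $R_1^{2-N}$ on $\partial_\eta v_{\eta,\mu}(R_1)$ needed to invert to $\calH$. Everything afterwards is bookkeeping with uniqueness of ODE initial value problems.
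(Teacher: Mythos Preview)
Your proposal is correct and follows essentially the same route as the paper: construct the far-field fast-decay solutions $v_{\eta,\mu}$ by a contraction in the weighted space $r^{N-2}v\in L^\infty$, obtain joint real-analyticity via the analytic implicit function theorem on the open set where $v>0$, invert $\eta\mapsto v_{\eta,\mu}(R_1)$ to get $\calH$, and then verify \eqref{Fconstraint} by matching Cauchy data at $R_1$ using uniqueness of the fixed point (forward) and ODE uniqueness (converse). The only cosmetic differences are that the paper bounds $V_\eta\ge 1/2$ directly from the contraction Lipschitz estimate rather than through the linearized equation, and writes $\Xi$ via the explicit formula \eqref{u'inftyformula} rather than as $v'_{\calH(\xi,\mu),\mu}(R_1)$; these are equivalent.
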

  \begin{proof}
    Since $u^*_{\mu^*}$ is non-increasing and becomes negative at some point in $(0,\infty)$ by Lemma~\ref{mubound}, there exsits $R_0>0$ such that $u^*_{\mu^*}\not\in\calP_{R_0}$.
In the below we choose $R_1>R_0$, and hence (1) in Lemma~\ref{SFsolconstraint} is satisfied.

We consider the integral equation
    \begin{equation}\label{vie}
    v(r)=r^{2-N}\eta-\mu F(r)-J[v](r)\ \textrm{for $r\in[R,\infty)$},\\
    \end{equation}
    where $\eta\in\R$, $\mu\in\R$ and
\begin{align*}
F(r)&:=\frac{1}{N-2}\int_r^\infty (r^{2-N}-t^{2-N})t^{N-1}f(t)\,dt,\\
J[v](r)&:=\frac{1}{N-2}\int_r^\infty (r^{2-N}-t^{2-N})t^{N-1}K(t)\max\{v(t),0\}^p\,dt.
\end{align*}
Note that if \eqref{vie} has a positive solution $v$, then $v$ satisfies \eqref{uinftyformula}, and hence $v$ satisfies the following:
    $$
    v''+\frac{N-1}{r}v'+K(r)\max\{v^p,0\}+\mu f(r)=0\ \ \text{for large $r>0$, and}
\lim_{r\to\infty}r^{-N+2}v(r)=\eta.
    $$
    
    We find a solution $v$ of \eqref{vie} in the space
    \[
    C_{2-N}[R,\infty):=\left\{h\in C[R,\infty):\sup_{r\ge R}r^{N-2}|h(r)|<\infty\right\}.
    \]
  Then $C_{2-N}[R,\infty)$ is a Banach space with the norm $\sup_{r\ge R}r^{N-2}|h(r)|$.
  
  Let $D:=[\underline{\eta}/4,4\overline{\eta}]\times[-2\mu^*,2\mu^*]$ and
    \begin{gather*}
    F_R:=\left\{h\in C_{2-N}[R,\infty):\frac{1}{8}\underline{\eta}\le r^{N-2}h(r)\le 8\overline{\eta}\ \ \textrm{for}\ \ r\ge R\right\},\\
    {F_R^\circ:=\left\{h\in C_{2-N}[R,\infty):\exists \varepsilon>0\ \left(\frac{1}{8}+\varepsilon\right)\underline{\eta}\le r^{N-2}h(r)<(8-\varepsilon)\overline{\eta}\ \ \textrm{for}\le \ r\ge R\right\}.}
    \end{gather*}
By \eqref{finfty} we first observe that
    \begin{equation}\label{F}
      \sup_{r\in[R,\infty)}r^{N-2}|F(r)|=o(1) \textrm{ as $R\to\infty$}.
    \end{equation}
Hereafter, $r$ is larger than $R$.
    For $v,v_1,v_2\in F_R$, we estimate
    \begin{align}
    \begin{split}
      r^{N-2}|J[v](r)|&\le C\int_r^\infty t^{N-1}K(t)\max\{v(t),0\}^p\,dt\\
      &\le C\overline{\eta}^p\int_r^\infty t^{N-1+\beta-(N-2)p}\,dt\\
      &\le Cr^{-(p-1)\tilde{c}}\overline{\eta}^p\\
      &\le CR^{-(p-1)\tilde{c}}\overline{\eta}^p,
    \end{split}\label{vest1}\\
    \begin{split}
      r^{N-2}|J[v_1](r)-J[v_2](r)|&\le C\int_r^\infty t^{N-1}K(t)\left|\max\{v_1(t),0\}^p-\max\{v_2(t),0\}^p\right| dt\\
      &\le C\overline{\eta}^{p-1}\sup_{r'\in[R,\infty)}(r')^{N-2}|v_1(r')-v_2(r')|\int_r^\infty t^{N-1+\beta-(N-2)p}\,dt\\
      &\le CR^{-(p-1)\tilde{c}}\overline{\eta}^{p-1}\sup_{r'\in[R,\infty)}(r')^{N-2}|v_1(r')-v_2(r')|.
    \end{split}\label{vest2}
    \end{align}
Taking $R_1>R_0$ sufficiently large, by \eqref{F}, \eqref{vest1}, and \eqref{vest2} we have that for each $(\eta,\mu)\in D$,
    \begin{gather}
\sup_{r\in[R_1,\infty)}r^{N-2}F(r)\le\frac{1}{16\mu^*}\underline{\eta},\qquad
\sup_{r\in[R_1,\infty)}r^{N-2}\left|J[v](r)\right|\le \frac{1}{8}\underline{\eta},\label{vie1}\\
\frac{1}{8}\underline{\eta}<r^{N-2}{(\eta r^{2-N}-\mu F(r)-J[v](r))}<8\overline{\eta},\label{vie2}\\
    \sup_{r\in[R_1,\infty)}r^{N-2}\left|J[v_1](r)-J[v_2](r)\right|\le\frac{1}{3}\sup_{r\in[R_1,\infty)}r^{N-2}|v_1(r)-v_2(r)|,\label{vie3}
    \end{gather}
    for all $v,v_1,v_2\in F_{R_1}$.
Because of \eqref{vie2} and \eqref{vie3}, the contraction mapping principle is applicable.
Hence, for all $(\eta,\mu)\in D$, there exists a unique solution $v_{\eta,\mu}\in F_{R_1}$ to the problem \eqref{vie}.
    
Next, we consider the mapping $\calM$ defined by
    \begin{gather*}
    \calM:D^\circ\times F_{R_1}^\circ\to{C_{2-N}[R_1,\infty)},\\
    \calM[\eta,\mu,v](r)=v(r)-\eta r^{2-N}+\mu F(r)+J[v](r).
    \end{gather*}
Since {$J$ is real analytic as a mapping $F^\circ_{R_1}\to C_{2-N}[R_1,\infty)$} {(see \textit{e.g.}, \cite{Z86} for detail)}, $\calM$ is also real analytic.
{
Indeed, for any $v\in F^\circ_{R_1}$ and $\varphi\in C_{2-N}[R_1,\infty)$ with $\sup_{r\ge R}r^{N-2}\varphi(r)$ small, we have that
\[
\psi(r):=\frac{\varphi(r)}{v(r)}\in L^\infty(R_1,\infty),
\]
with $\delta:=\|\psi\|_{L^\infty(R_1,\infty)}$ small. For any $n\in\N$, we see that
\begin{align*}
  J[v+\varphi](r)&=\frac{1}{N-2}\int_r^\infty(r^{2-N}-t^{2-N})t^{N-1}K(t)v(t)^p(1+\psi(t))^p\,dt\\
  &=\frac{1}{N-2}\int_r^\infty(r^{2-N}-t^{2-N})t^{N-1}K(t)v(t)^p\left(\sum_{k=0}^n\begin{pmatrix}
    p\\
    k
  \end{pmatrix}\psi(t)^k+O(\delta^{n+1})\right)\,dt\\
  &=\sum_{k=0}^n T_k[\varphi,\ldots,\varphi](r)+\int_r^\infty r^{2-N}t^{N-1+\beta-(N-2)p}O(\delta^{n+1})\,dt\\
  &=\sum_{k=0}^n T_k[\varphi,\ldots,\varphi](r)+O(\delta^{n+1})r^{2-N-(p-1)\tilde{c}}.
\end{align*}
where $\begin{pmatrix}
    p\\
    k
  \end{pmatrix}$ denotes the generalized binomial coefficient, and
  \[
  T_k[\varphi_1,\ldots,\varphi_k](r):=\frac{1}{N-2}\begin{pmatrix}
    p\\
    k
  \end{pmatrix}\int_r^\infty(r^{2-N}-t^{2-N})t^{N-1}K(t)v(t)^p\prod_{j=1}^k\frac{\varphi_j(t)}{v(t)}\,dt.
  \]
  By the same calculation as in \eqref{vest1}, $T_k$ is a continuous $k$-linear operator from $C_{2-N}[R_1,\infty)^k$ to $C_{2-N}[R_1,\infty)$. Hence
  \[
  \sup_{r\ge R_1}r^{N-2}\left|J[v+\varphi](r)-\sum_{k=0}^n T_k[\varphi,\ldots,\varphi]\right|\le CR_1^{-(p-1)\tilde{c}}\left(\frac{\sup_{r\ge R_1}r^{N-2}|\varphi(r)|}{\inf_{r\ge R_1}r^{N-2}v(r)}\right)^{n+1}
  \]
  implies the analyticity of $J$.
}

Furthermore, it follows from \eqref{vie3} that
     \[
     \sup_{r\in[R_1,\infty)}r^{N-2}|(\calM_v[\eta,\mu,v]h)(r)|\ge \frac{2}{3}\sup_{r\in[R_1,\infty)}r^{N-2}|h(r)|
     \]
    for all $(\eta,\mu,v)\in D^\circ\times F^\circ_{R_1}$ and $h\in C_{2-N}[R_1,\infty)$. By the implicit function theorem, $(\eta,\mu)\mapsto v_{\eta,\mu}$ is a real analytic map from $D^\circ$ to $F_{R_1}^\circ$. In particular, the function $V$ on $D^\circ$ defined by
    \[
    V(\eta,\mu):=R_1^{N-2}v_{\eta,\mu}(R_1)
    \]
is real analytic.
It follows from \eqref{vie1} that for all $(\eta,\mu)\in D$,
\begin{equation}\label{v-em}
\sup_{r\in[R_1,\infty)}r^{N-2}\mu\left|F(r)\right|
+\sup_{r\in[R_1,\infty)}r^{N-2}\left|J[v](r)\right|\\
\le \frac{\mu}{16\mu^*}\underline{\eta}+\frac{1}{8}\underline{\eta}
\le \frac{1}{4}\underline{\eta},
\end{equation}
where we used $\mu\le 2\mu^*$ and \eqref{vie1}.
Since $r^{N-2}v_{\eta,\mu}(r)=\eta-r^{N-2}\mu F(r)-r^{N-2}J[v_{\eta,\mu}](r)$, by \eqref{v-em} we have
\[
R_1^{N-2}v_{4\overline{\eta},\mu}(r)\ge 4\overline{\eta}-\frac{1}{4}\underline{\eta}>2\overline{\eta},\quad R_1^{N-2}v_{\underline{\eta}/4,\mu}(r)\le \frac{1}{4}\underline{\eta}+\frac{1}{4}\underline{\eta}=\frac{1}{2}\underline{\eta}
     \]
      for all $\mu\in(-2\mu^*,2\mu^*)$.
       It follows from the intermediate value theorem that
      \[
      \left(\underline{\eta}/2,2\overline{\eta}\right)\subset V\left(\left(\underline{\eta}/4,4\overline{\eta}\right)\times\{\mu\}\right)
      \]
      for all $\mu\in(-2\mu^*,2\mu^*)$.
      Furthermore, by \eqref{vie3}, for all $\mu\in(-2\mu^*,2\mu^*)$ and $\eta_1,\eta_2\in(\underline{\eta}/4,4\overline{\eta})$, we have
    \begin{align*}
    \sup_{r\in[R_1,\infty)}r^{N-2}|v_{\eta_1,\mu}(r)-v_{\eta_2,\mu}(r)|&=\sup_{r\in[R_1,\infty)}\left|\eta_1-\eta_2+r^{N-2}(J[v_{\eta_1,\mu}](r)-J[v_{\eta_2,\mu}](r))\right|\\
    &\le |\eta_1-\eta_2|+\frac{1}{3}\sup_{r\in[R_1,\infty)}r^{N-2}\left|v_{\eta_1,\mu}(r)-v_{\eta_2,\mu}(r)\right|,
    \end{align*}
    and thus
    \[
    \sup_{r\in[R_1,\infty)}r^{N-2}|v_{\eta_1,\mu}(r)-v_{\eta_2,\mu}(r)|\le\frac{3}{2}|\eta_1-\eta_2|.
    \]
    This implies that
    \begin{align*}
      R_1^{N-2}|v_{\eta_1,\mu}(R_1)-v_{\eta_2,\mu}(R_1)|&\ge |\eta_1-\eta_2|-R_1^{N-2}\left|(J[v_{\eta_1,\mu}](r)-J[v_{\eta_2,\mu}](r))\right|\\
      &\ge |\eta_1-\eta_2|-\frac{1}{3}\sup_{r\in[R_1,\infty)}r^{N-2}|v_{\eta_1,\mu}(r)-v_{\eta_2,\mu}(r)|\\
      &\ge \frac{1}{2}|\eta_1-\eta_2|,
    \end{align*}
    and thus
    \[
    V_\eta(\eta,\mu)\ge \frac{1}{2}
    \]
    for all $(\eta,\mu)\in D^\circ$. By the implicit function theorem, there is a real analytic function $\calH$ on $D':=(\underline{\eta}/2,2\overline{\eta})\times(-2\mu^*,2\mu^*)$ such that
    \begin{equation}\label{q}
      (\eta,\mu)\in D^\circ,R_1^{N-2}v_{\eta,\mu}(R_1)=\xi\iff \eta=\calH(\xi,\mu)
    \end{equation}
    for all $(\xi,\mu)\in D'$. Finally, we define $\Xi:(R_1^{2-N}\underline{\eta}/2,2R_1^{2-N}\overline{\eta})\times(-2\mu^*,2\mu^*)\to\R$ by
    \[
    \Xi(u,\mu)=R_1^{1-N}\left\{-(N-2)\calH(R_1^{N-2}u,\mu)+\int_{R_1}^\infty s^{N-1}\left(K(s){v^{*}_{\calH(R_1^{N-2}u,\mu),\mu}(s)^p}+\mu f(s)\right)\,ds\right\}.
    \]
    Note that the definition of $\Xi$ comes from \eqref{u'inftyformula} and that $\Xi$ is analytic.
We prove \eqref{Fconstraint}, especially
\begin{equation}\label{S6L5E1}
    u^{*\prime}_{\mu}(R_1)=\Xi(u^*_\mu(R_1),\mu).
\end{equation}

Let $\mu\in\calF$. It follows from Lemmas~\ref{mubound}~and~\ref{SFsolbound} that $(R_1^{N-2}u^*_\mu(R_1),\mu)\in D^\circ$. Furthermore, by \eqref{uinftyformula} and Lemma~\ref{SFsolbound}, we have $u^*_\mu=v_{\eta(\mu),\mu}$ on $[R_1,\infty)$. This together with \eqref{q} implies that
    \[
    \calH(R_1^{N-2}u^*_\mu(R_1),\mu)=\eta(\mu).
    \]
    Hence, by \eqref{u'inftyformula}, we deduce \eqref{S6L5E1}.
    Conversely, we assume that $\mu\in\calP_{R_1}\cap[0,\infty)$ satisfies $R_1^{N-2}u^*_\mu(R_1)\in(\underline{\eta}/2,2\overline{\eta})$ and \eqref{S6L5E1}.
    Let $v:=v_{\calH(R_1^{N-2}u^*_\mu(R_1),\mu),\mu}$.
Then a direct calculation shows that
    \begin{gather*}
    v''+\frac{N-1}{r}v'+K(r)v^p_\mu+\mu f(r)=0\ \textrm{for $r>R_1$},\\
    v(R_1)=u^*_\mu(R_1),\ v'(R_1)=\Xi(u^*_\mu(R_1),\mu).
    \end{gather*}
By defining as $ u^*_\mu \equiv v$ on $[R_1,\infty)$, we can extend $u^*_\mu$ into a positive singular fast-decay solution to \eqref{ODE} on $(0,\infty)$.
Thus, $\mu\in\calF$.

In summary, $\mu^*$ satisfies (1) and $\Xi$ constructed here satisfies (2).
The proof is complete.
  \end{proof}

\begin{lemma}\label{SFfinite}
  Assume the same conditions as in Theorem~\ref{S1T3}. Then $\calF$ is a finite set.
\end{lemma}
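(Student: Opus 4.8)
The plan is to combine the real analyticity supplied by Lemma~\ref{SFsolconstraint} with a monotonicity argument showing that $\calF$ is closed; finiteness then drops out of the identity theorem together with $\mu^*\notin\calP_{R_1}$. First I would record that $\calF$ is bounded: since $\calF\subseteq\calP_\infty\cap[0,\infty)$, Lemma~\ref{mubound} gives $\calF\subseteq[0,\mu^*)$. Let $R_1$, $\underline{\eta}$, $\overline{\eta}$, $\Xi$ be as in Lemma~\ref{SFsolconstraint}, put
\[
\calU:=\{\mu\in\calP_{R_1}:R_1^{N-2}u^*_\mu(R_1)\in(\underline{\eta}/2,2\overline{\eta})\},\qquad \Theta(\mu):=u^{*\prime}_\mu(R_1)-\Xi(u^*_\mu(R_1),\mu),
\]
and observe that $\calU$ is open in $\R$ and $\Theta$ is real analytic on $\calU$ by Theorem~\ref{singexists}, while \eqref{Fconstraint} reads $\calF=\{\mu\in\calU\cap[0,\infty):\Theta(\mu)=0\}$.

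The crux is to prove that $\calF$ is closed in $[0,\infty)$. For every $\mu\ge 0$ one has $(r^{N-1}u^{*\prime}_\mu)'=-r^{N-1}(K(r)\max\{u^*_\mu,0\}^p+\mu f(r))\le 0$, and $r^{N-1}u^{*\prime}_\mu(r)\to 0$ as $r\to 0$ (by monotonicity the limit $\ell$ exists; $\ell>0$ would make $u^*_\mu$ increasing, hence bounded, near $0$, while $\ell<0$ would force $r^\theta u^*_\mu(r)\gtrsim r^{-c}\to\infty$, both contradicting Lemma~\ref{u0asypro}; compare the proof of Lemma~\ref{S2L1}). Integrating, $r^{N-1}u^{*\prime}_\mu(r)=-\int_0^r s^{N-1}(K\max\{u^*_\mu,0\}^p+\mu f)\,ds<0$ for every $r>0$, because the integrand is positive near $0$; hence each $u^*_\mu$ ($\mu\ge0$) is strictly decreasing on $(0,\infty)$. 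Now let $\mu_n\in\calF$ with $\mu_n\to\nu\in[0,\infty)$; by the $C^1$-dependence in Theorem~\ref{singexists}(1), $u^*_{\mu_n}(r)\to u^*_\nu(r)$ locally uniformly on $(0,\infty)$, so Lemma~\ref{SFsolbound} gives $\underline{\eta}\le r^{N-2}u^*_\nu(r)\le\overline{\eta}$ for all $r>R$. Since $u^*_\nu$ is strictly decreasing and $u^*_\nu(R_1)\ge\underline{\eta}R_1^{2-N}>0$, it is positive on $(0,R_1]$, hence on all of $(0,\infty)$; thus $\nu\in\calP_\infty$, and $r^{N-2}u^*_\nu(r)$ being bounded makes $u^*_\nu$ fast-decay, so $\nu\in\calF$. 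In particular no sequence in $\calF$ can converge to $\mu^*$ (that would give $\mu^*\in\calP_\infty\subseteq\calP_{R_1}$, contradicting Lemma~\ref{SFsolconstraint}(1)), so $\calF$ is closed and bounded.

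Finally, suppose $\calF$ is infinite. Then it has a limit point $\mu_0$, which lies in $\calF\subseteq\calU$ by closedness, and a sequence of distinct $\mu_n\in\calF$ tends to $\mu_0$. As $\Theta$ is real analytic near $\mu_0$ and vanishes along $\{\mu_n\}$, the identity theorem gives $\Theta\equiv 0$ on the connected component $I=(a,b)$ of $\calU$ containing $\mu_0$; by \eqref{Fconstraint}, $I\cap[0,\infty)\subseteq\calF$, and since $\mu_0\in[0,b)$ this forces $(\mu_0,b)\subseteq\calF$. Because $I\subseteq\calU\subseteq\calP_{R_1}$ and $\mu^*\notin\calP_{R_1}$, we get $b\le\mu^*<\infty$, so $b$ is a finite limit point of $\calF$ and therefore $b\in\calF\subseteq\calU$; but $b$ is an endpoint of a connected component of the open set $\calU$, so $b\notin\calU$ — a contradiction. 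Hence $\calF$ is finite.

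The main obstacle is precisely the closedness of $\calF$: a priori a limit point of $\calF$ could sit on the boundary of $\calP_{R_1}$, where neither analyticity of $\mu\mapsto u^*_\mu(R_1)$ nor positivity of $u^*_\mu$ is available. The strict monotonicity of the singular solutions — coming from $(r^{N-1}u^{*\prime}_\mu)'\le 0$ together with $r^{N-1}u^{*\prime}_\mu(r)\to 0$ at the origin — is exactly what transports positivity, and hence membership in $\calF$, to the limit, confining $\calF$ to the region where the identity-theorem argument can be run; everything else is bookkeeping with the objects already constructed in Lemmas~\ref{SFsolbound} and \ref{SFsolconstraint}.
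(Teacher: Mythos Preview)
Your proof is correct and follows essentially the same route as the paper's: both argue by contradiction, pick an accumulation point of $\calF$, show it lies in the open set $\calU$, apply the identity theorem to the analytic function $\Theta$ (the paper's $H$) on the connected component $I$ of $\calU$ containing it, conclude $I\cap[0,\infty)\subseteq\calF$, and then derive a contradiction by showing the finite right endpoint of $I$ would again lie in $\calU$.

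The one genuine addition in your argument is the explicit monotonicity step: you prove $u^{*\prime}_\mu<0$ on $(0,\infty)$ for every $\mu\ge 0$ (via $(r^{N-1}u^{*\prime}_\mu)'\le 0$ and $r^{N-1}u^{*\prime}_\mu\to 0$ at the origin) and use it to show that $\calF$ is closed in $[0,\infty)$. This cleanly guarantees that both the accumulation point and the endpoint $b$ lie in $\calP_{R_1}$, and hence in $\calU$. The paper's proof is terser here: it invokes Lemma~\ref{SFsolbound} and continuity to get $R_1^{N-2}u^*_{\mu_*}(R_1)\in[\underline{\eta},\overline{\eta}]$ (and likewise at $\lambda_2$) and asserts membership in $\calU$ without spelling out why the limit point stays in $\calP_{R_1}$; your monotonicity argument makes that step fully explicit. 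Otherwise the two proofs coincide.
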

\begin{proof}
Assume on the contrary that $\calF$ is an infinite set. We define
  \[
  \calU:=\{\mu\in\calP_{R_1}: R_1^{N-2}u^*_\mu(R_1)\in(\underline{\eta}/2,2\overline{\eta})\},
  \]
  where $R_1$ is as in Lemma~\ref{SFsolconstraint}. It is clear that $\calU$ is an open subset of $\R$. It also follows from \eqref{SFsolbound} that $\calF\subset\calU$.
  
  Since $\calF$ is bounded by \eqref{SFsolbound}, $\calF$ has an accumulation point $\mu_*\in\R$. Furthermore, since $R_1^{N-2}u^*_{\mu_*}(R_1)\in[\underline{\eta},\overline{\eta}]$ by $\mu_*\in\overline{\calF}$ and $\eqref{SFsolbound}$, we have $\mu_*\in\calU$. Let $I=(\lambda_1,\lambda_2)$ be the connected component of $\calU$ that includes $\mu_*$. It follows from Theorem~\ref{singexists} and Lemma~\ref{SFsolconstraint} that
  \[
  \calF\cap I=\{\mu\in I\cap[0,\infty): R_1^{N-2}u^*_\mu(R_1)\in(\underline{\eta}/2,2\overline{\eta}),H(\mu)=0\},
  \]
where
  \[
  H(\mu):=u^{*\prime}_\mu(R_1)-\Xi(u^*_\mu(R_1),\mu).
  \]
  By Theorem~\ref{singexists} and Lemma~\ref{SFsolconstraint}, $H$ is a real analytic function on $I$. Since $\mu^*\in I$ is an accumulation point of $\calF\cap I$, by the identity theorem, we have
  \[
  I\cap[0,\infty)\subset\calF.
  \]
 Furthermore, by the choice of $R_1$, we have $\mu_*\le\mu^*<\infty$. It follows from $\lambda_2\in\partial(I\cap[0,\infty))\subset\overline{\calF}$ that
  \[
  R_1^{N-2}u^*_{\lambda_2}(R_1)\in[\underline{\eta},\overline{\eta}].
  \]
  By the definition of $\calU$, it holds that $\lambda_2\in\calU$, which contradicts the fact that $(\lambda_1,\lambda_2)$ is a connected component of $\mathcal{U}$.
\end{proof}

  We next investigate asymptotic behavior of slow decay solutions. We define $\tilde{w}(t):=e^{\tilde{\theta}t}u(e^t)$. We see that $\tilde{w}$ is a solution to equation
  \begin{equation}\label{tweq}
    \tilde{w}''+\tilde{a}\tilde{w}'-\tilde{A}^{p-1}\tilde{w}+\tilde{L}(t)\tilde{w}^p+\mu\tilde{g}=0\ \textrm{for $t>0$},
  \end{equation}
  where
  \[
  \tilde{a}:=N-2-2\tilde{\theta},\ \ \tilde{A}^{p-1}:=\tilde{\theta}\tilde{c},
  \ \ \tilde{L}(t):=e^{\beta t}K(e^t),\ \ \tilde{g}(t):=e^{(2+\tilde{\theta})t}f(e^t).
  \]
  We also define
  \[
  \tilde{\gamma}:=k_\infty^{\frac{1}{p-1}} \tilde{A}.
  \]
  
  \begin{lemma}\label{S6L7-}
  For any $\varepsilon>0$, there are $\varepsilon'>0$ and $\tau\in\R$ such that if
      \begin{equation}\label{Sdecayrate0-1}
       |\tilde{w}(t)-\tilde{\gamma}|<\varepsilon',\ |\tilde{w}'(t)|<\varepsilon',
      \end{equation}
      for some $t>\tau$, then
      \begin{equation}\label{Sdecayrate0-2}
      |\tilde{w}(t')-\tilde{\gamma}|<\varepsilon,\ |\tilde{w}'(t')|<\varepsilon,
      \end{equation}
      for all $t'>t$.
  \end{lemma}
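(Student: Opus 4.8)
The plan is to run the trapping argument used in the proof of Lemma~\ref{uapprox2}, but with the Emden--Fowler time $t$ tending to $+\infty$ rather than to a finite endpoint, exploiting that $\tilde L(t)\to k_\infty$ and $\tilde g(t)\to 0$ as $t\to\infty$. First I would set $\bar w(t):=\tilde w(t)-\tilde\gamma$. Using $-\tilde A^{p-1}\tilde\gamma+k_\infty\tilde\gamma^p=0$ together with $\tilde\gamma^{p-1}=\tilde A^{p-1}/k_\infty$, one checks that $\bar w$ solves
\[
\bar w''+\tilde a\bar w'+b(t)\bar w=-(\tilde L(t)-k_\infty)\tilde\gamma^p-\mu\tilde g(t),\qquad b(t):=-\tilde A^{p-1}+p\tilde L(t)\int_0^1(\tilde\gamma+\sigma\bar w(t))^{p-1}\,d\sigma .
\]
Since $p>p_S(\beta)$ we have $\tilde a>0$ and $\tilde A^{p-1}=\tilde\theta\tilde c>0$, so Lemma~\ref{pertODE}, applied with the fixed limit coefficients $\tilde a$ and $b:=(p-1)\tilde A^{p-1}>0$, supplies constants $\lambda>0$ and $C>0$ depending only on these. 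The two facts I would use are: if $|\bar w(s)|\le\delta$ then $|b(s)-b|\le C(|\tilde L(s)-k_\infty|+\delta)$; and \eqref{Kinfty} gives $\tilde L(t)\to k_\infty$ while \eqref{finfty2} (together with $p>(N+\beta)/(N-2)$, which follows from $p>p_S(\beta)$) gives $\tilde g(t)\to 0$, so that $m(\tau):=\sup_{s>\tau}\bigl(\tilde\gamma^p|\tilde L(s)-k_\infty|+|\mu||\tilde g(s)|\bigr)\to 0$ as $\tau\to\infty$.

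Next I would set up the exit-time (continuation) argument. Given $\varepsilon>0$, fix $\lambda,C$ as above, put $\delta:=\min\{\varepsilon,\tilde\gamma/2\}$ and shrink $\delta$ so that $C\delta<\lambda/8$; then enlarge $\tau$ so that $C|\tilde L(s)-k_\infty|<\lambda/8$ and $(2C/\lambda)m(\tau)<\delta/4$ for $s>\tau$; and finally fix $\varepsilon'$ with $\varepsilon'<\delta$ and $2C\varepsilon'<\delta/4$. Suppose \eqref{Sdecayrate0-1} holds at some $t>\tau$, so that $|\bar w(t)|+|\bar w'(t)|<2\varepsilon'$ and $|\bar w(t)|<\delta$. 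Define $t_2:=\sup\{\bar t>t:|\bar w(s)|\le\delta\text{ on }[t,\bar t]\}$, which is $>t$ by continuity, and note that $\tilde w(s)\ge\tilde\gamma-\delta>0$ on $[t,t_2)$, so $\tilde w$ is defined on a neighbourhood of $[t,t_2)$. On $[t,t_2)$ we have $|b(s)-b|<\lambda/4$, hence \eqref{pertODEdecayest} (applied with a base point slightly to the left of $t$) gives, for $t<t'<t_2$,
\[
|\bar w(t')|+|\bar w'(t')|\le Ce^{-\frac{\lambda}{2}(t'-t)}\bigl(|\bar w(t)|+|\bar w'(t)|\bigr)+\frac{2C}{\lambda}m(\tau)<2C\varepsilon'+\frac{2C}{\lambda}m(\tau)<\frac{\delta}{2},
\]
where the forcing term was bounded using $|{-}(\tilde L(s)-k_\infty)\tilde\gamma^p-\mu\tilde g(s)|\le m(\tau)$ on $(t,t')$ and $\int_t^{t'}e^{-\frac{\lambda}{2}(t'-s)}\,ds\le 2/\lambda$.

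Finally, this strict bound forces $t_2=\infty$: were $t_2<\infty$, continuity and the definition of $t_2$ would give $|\bar w(t_2)|=\delta>\delta/2$, contradicting the estimate; and since $|\bar w|+|\bar w'|\le\delta/2$ on $[t,t_2)$ precludes blow-up, $\tilde w$ extends to all $t'>t$. Thus the displayed inequality holds for every $t'>t$, and since $\delta\le\varepsilon$ it yields $|\tilde w(t')-\tilde\gamma|\le\delta/2<\varepsilon$ and $|\tilde w'(t')|\le\delta/2<\varepsilon$, which is \eqref{Sdecayrate0-2}. The only point needing care is the order of the constant choices --- $\lambda,C$ from the limit coefficients, then $\delta$ small to control the nonlinear part of $b$, then $\tau$ large to control $\tilde L-k_\infty$ and $\tilde g$ both inside $b$ and in the forcing, then $\varepsilon'$ small --- with the decay rate $\lambda$ held fixed throughout; this is precisely the mechanism already used in Lemma~\ref{uapprox2}, so no genuinely new difficulty arises.
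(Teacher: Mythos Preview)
Your proposal is correct and follows precisely the approach the paper itself indicates: the paper simply writes ``The lemma can be proved in a similar argument to that of Lemma~\ref{uapprox2}. We omit the proof,'' and you have carried out exactly that adaptation --- setting $\bar w=\tilde w-\tilde\gamma$, linearising around $\tilde\gamma$ with the nonlinear coefficient $b(t)$, invoking Lemma~\ref{pertODE} with limit coefficients $\tilde a>0$ and $(p-1)\tilde A^{p-1}>0$, and running the exit-time trapping argument with $\tilde L(t)\to k_\infty$ and $\tilde g(t)\to 0$ as $t\to+\infty$. The order of constant choices you describe matches the mechanism in Lemma~\ref{uapprox2}, so there is nothing to add.
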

\begin{proof}
The lemma can be prove in a similar argument to that of Lemma~~\ref{uapprox2}.
We omit the proof.
\end{proof}

  \begin{lemma}\label{SDasymp}
Let $N\ge 3$ and $p>p_S(\beta)$.
Assume that \eqref{Kinfty} and \eqref{finfty2} hold.
If $\tilde{w}$ is a solution to \eqref{tweq} satisfying,
    \[
    0<\limsup_{t\to\infty}\tilde{w}(t)<\infty,
    \]
    then
  \begin{equation}\label{SDecayrate}
\lim_{t\to\infty}\tilde{w}(t)=\tilde{\gamma}\ \ \text{and}\ \ 
\lim_{t\to\infty}\tilde{w}'(t)=0.
  \end{equation}
  \end{lemma}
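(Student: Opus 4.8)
The plan is to run the argument of Lemma~\ref{S2L2} at $t\to+\infty$ instead of $t\to-\infty$, and then to add a short step controlling $\tilde{w}'$ that uses the trapping Lemma~\ref{S6L7-}.

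First I would record the limiting data. Since $p>p_S(\beta)$ we have $\tilde{a}=N-2-2\tilde{\theta}>0$, $\tilde{c}=N-2-\tilde{\theta}>0$, hence $\tilde{A}^{p-1}=\tilde{\theta}\tilde{c}>0$; by \eqref{Kinfty}, $\tilde{L}(t)\to k_\infty$ as $t\to\infty$; and since $q>N>2+\tilde{\theta}$, \eqref{finfty2} gives that $\tilde{g}(t)=e^{(2+\tilde{\theta})t}f(e^t)$ is bounded on $\R$, decays exponentially as $t\to\infty$, and (after the substitution $r=e^s$, using \eqref{f0} and \eqref{finfty}) satisfies $\int_{-\infty}^{\infty}e^{\tilde{c}s}|\tilde{g}(s)|\,ds<\infty$ and $\int_{-\infty}^{\infty}e^{-\tilde{\theta}s}|\tilde{g}(s)|\,ds<\infty$. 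From $0<\limsup_{t\to\infty}\tilde{w}(t)<\infty$ the solution $\tilde{w}$ extends to a neighborhood of $+\infty$ and is bounded there. Mimicking Lemma~\ref{S2L2}, I would let $\tilde{\psi}$ be the solution of $\tilde{\psi}''+\tilde{a}\tilde{\psi}'-\tilde{A}^{p-1}\tilde{\psi}=-\tilde{g}$ obtained by variation of parameters against the fundamental pair $e^{\tilde{\theta}t}$, $e^{-\tilde{c}t}$ of $\partial_t^2+\tilde{a}\partial_t-\tilde{A}^{p-1}$ (whose Wronskian is the constant $-(N-2)$, since $\tilde{\theta}+\tilde{c}=N-2$), choosing the branch that integrates the growing mode $e^{\tilde{\theta}t}$ forward from $t$ to $+\infty$; the integrability above gives $\tilde{\psi}(t)\to0$ and $\tilde{\psi}'(t)\to0$ as $t\to\infty$. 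Setting $\tilde{w}_1:=\tilde{w}-\mu\tilde{\psi}$ we obtain
\[
\tilde{w}_1''+\tilde{a}\tilde{w}_1'-\tilde{A}^{p-1}\tilde{w}_1+\tilde{L}(t)\bigl(\tilde{w}_1+\mu\tilde{\psi}\bigr)^{p}=0,\qquad 0<\limsup_{t\to\infty}\tilde{w}_1(t)<\infty .
\]
(One could equally keep $\tilde{w}$ and skip $\tilde{\psi}$, absorbing $(\tilde{L}(t)-k_\infty)\tilde{w}^p$ and $\mu\tilde{g}(t)$ into the $o(1)$ perturbation below.)

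The core step is to conclude $\tilde{w}_1(t)\to\tilde{\gamma}$. I would write the $\tilde{w}_1$-equation as $\tilde{w}_1''+\tilde{a}\tilde{w}_1'+H(\tilde{w}_1)+G(t,\tilde{w}_1)=0$ with $H(v):=-\tilde{A}^{p-1}v+k_\infty v^{p}$ and $G(t,v):=(\tilde{L}(t)-k_\infty)v^{p}+\tilde{L}(t)\bigl((v+\mu\tilde{\psi}(t))^{p}-v^{p}\bigr)$, so that $G(t,v)\to0$ as $t\to\infty$ uniformly for $v$ in compact subsets of $(0,\infty)$ (using $\tilde{L}(t)\to k_\infty$ and $\tilde{\psi}(t)\to0$), while $\tilde{\gamma}$ is the unique positive zero of $H$ and $H(v)(v-\tilde{\gamma})>0$ on $(0,\infty)\setminus\{\tilde{\gamma}\}$. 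I would then invoke the $t\to+\infty$ analogue of \cite[Lemma 3.2]{MN20}: its proof is unchanged because $\tilde{a}>0$, so the energy $\frac12(\tilde{w}_1')^{2}+\Phi(\tilde{w}_1)$, with $\Phi'=H$, is a perturbed Lyapunov functional that is non-increasing in $t$; this yields $\tilde{w}_1(t)\to\tilde{\gamma}$, hence $\tilde{w}(t)\to\tilde{\gamma}$. This is where I expect the only genuine difficulty to lie: the factor $\tilde{L}(t)-k_\infty$ coming from \eqref{Kinfty} is merely $o(1)$ and need not be integrable (or square-integrable), so a bare energy estimate does not close and one must appeal to the more careful asymptotically-autonomous lemma; everything else in this step (boundedness of $\tilde{w}$ near $+\infty$, $\tilde{w}_1''$ bounded, $\int^{\infty}(\tilde{w}_1')^2<\infty$, exclusion of the equilibrium $0$ by $\limsup_{t\to\infty}\tilde{w}_1>0$ and of any non-critical accumulation point by $\tilde{w}_1'\to0$) is routine.

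Finally, to upgrade this to $\tilde{w}'(t)\to0$: given $\varepsilon>0$, take $\varepsilon'$ and $\tau$ as in Lemma~\ref{S6L7-}. Since $\tilde{w}(t)\to\tilde{\gamma}$, for large $t$ we have $|\tilde{w}(t)-\tilde{\gamma}|<\varepsilon'$; and by the mean value theorem on $[t,t+1]$ there is $\xi_t\in(t,t+1)$ with $\tilde{w}'(\xi_t)=\tilde{w}(t+1)-\tilde{w}(t)\to0$, so $|\tilde{w}'(\xi_t)|<\varepsilon'$ for $t$ large. Applying Lemma~\ref{S6L7-} at $\xi_t$ gives $|\tilde{w}(t')-\tilde{\gamma}|<\varepsilon$ and $|\tilde{w}'(t')|<\varepsilon$ for all $t'>\xi_t$; since $\varepsilon>0$ was arbitrary this proves \eqref{SDecayrate}. (Alternatively, once $\tilde{w}\to\tilde{\gamma}$ one may write \eqref{tweq} as $\tilde{w}''+\tilde{a}\tilde{w}'=h(t)$ with $h(t)=\tilde{A}^{p-1}\tilde{w}-\tilde{L}(t)\tilde{w}^p-\mu\tilde{g}\to0$ — because $\tilde{\gamma}$ is the positive zero of $H$ — and conclude $\tilde{w}'(t)=e^{-\tilde{a}(t-\tau)}\tilde{w}'(\tau)+e^{-\tilde{a}t}\int_\tau^t e^{\tilde{a}s}h(s)\,ds\to0$.)
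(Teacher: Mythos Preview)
Your argument is correct and reaches the same conclusion, but it proceeds quite differently from the paper's proof. The paper does \emph{not} invoke the $t\to+\infty$ analogue of \cite[Lemma~3.2]{MN20}; instead it gives a self-contained oscillation/energy argument. Assuming $\liminf<\limsup$, the authors first use the trapping Lemma~\ref{S6L7-} to show neither extreme can equal $\tilde{\gamma}$, then evaluate \eqref{tweq} along local extrema to force $\liminf<\tilde{\gamma}<\limsup$, and finally track the energy $E(t)=\tfrac12(\tilde{w}')^2-\tfrac{\tilde A^{p-1}}{2}\tilde{w}^2+\tfrac{k_\infty}{p+1}\tilde{w}^{p+1}$ across each monotone crossing of $[\tilde\gamma-\varepsilon',\tilde\gamma+\varepsilon']$. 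The key trick is that on a monotone interval the perturbation term satisfies $\bigl|\int(k_\infty-\tilde L)\tilde{w}^p\tilde{w}'\,dt\bigr|\le\sup|k_\infty-\tilde L|\cdot|\Delta\tilde{w}|$, so the $o(1)$ coefficient is harmless even without integrability; this yields a uniform drop $E\downarrow$ by $\tilde a\varepsilon'^2$ per half-oscillation and a contradiction. The case $\gamma^*=\gamma_*\neq\tilde\gamma$ is then disposed of by a short Gronwall step, and $\tilde{w}'\to0$ is obtained as you do, via Lemma~\ref{S6L7-}.

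Your route is more economical but leans on a black box, and two small caveats are worth recording. First, the phrase ``its proof is unchanged because $\tilde a>0$'' is misleading: after time reversal $s=-t$ the damping changes sign, so the $-\infty$ statement with $a>0$ is \emph{not} formally the $+\infty$ statement with $\tilde a>0$; the result you need is certainly true (indeed easier, since at $+\infty$ the energy dissipates forward), but one should not claim the proofs coincide verbatim. Second, your parenthetical sketch hinges on ``$\int^\infty(\tilde w_1')^2<\infty$'', which does not follow from the bare identity $E'=-\tilde a(\tilde w_1')^2-G\tilde w_1'$ when $G$ is merely $o(1)$; this is exactly the gap the ``more careful'' lemma must fill, and you should either cite a precise asymptotically-autonomous invariance principle or supply the monotone-crossing bound above. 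Your two derivations of $\tilde w'(t)\to0$ (via Lemma~\ref{S6L7-} with the mean value theorem, or via the Duhamel formula for $\tilde w''+\tilde a\tilde w'=h(t)$ with $h\to0$) are both fine, and the second in fact makes the first unnecessary.
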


\begin{proof}
    We prove that
    \begin{equation}\label{SDecayrate1}
      \limsup_{t\to\infty}\tilde{w}(t)=\liminf_{t\to\infty}\tilde{w}(t).
    \end{equation}
    Assume on the contrary that $\liminf\limits_{t\to\infty}\tilde{w}(t)<\limsup\limits_{t\to\infty}\tilde{w}(t)$. Then it follows that 
    \begin{equation}\label{Sdecayrate1-1}
      \liminf_{t\to\infty}\tilde{w}(t)\neq\tilde{\gamma}\ \ \text{and}\ \ 
      \limsup\limits_{t\to\infty}\tilde{w}(t)\neq\tilde{\gamma}.
    \end{equation}
    Indeed, if one of $\liminf\limits_{t\to\infty}\tilde{w}(t)$ and $\limsup\limits_{t\to\infty}\tilde{w}(t)$ equals to $\tilde{\gamma}$, we find a sequence $t_1<t_2<\cdots\to\infty$ such that $\lim\limits_{n\to\infty}\tilde{w}(t_n)=\tilde{\gamma}$ and $\tilde{w}'(t_n)=0$ for all $n\in\Z_{\ge 1}$.
This together with Lemma~\ref{S6L7-} implies that $\lim\limits_{t\to\infty}\tilde{w}(t_n)=\tilde{\gamma}$, which contradicts the assumption.

    We next prove that
$$
      \liminf_{t\to\infty}\tilde{w}(t)<\tilde{\gamma}<\limsup_{t\to\infty}\tilde{w}(t).
$$
    To see that $\gamma^*:=\limsup\limits_{t\to\infty}\tilde{w}(t)>\tilde{\gamma}$, we find a sequence ${\overline{t}_1}<{\overline{t}_2}<\ldots\to\infty$ such that
    \[
    \lim_{n\to\infty}\tilde{w}(\overline{t}_n)=\gamma^*,\ \tilde{w}'({\overline{t}_n})=0,\ \tilde{w}''({\overline{t}_n})\le 0,\ \textrm{for all $n\in\Z_{\ge 1}$}.
    \]
    Putting $t=\overline{t}_n$ in \eqref{tweq} and sending $n\to\infty$, by \eqref{finfty2} we see that
    \[
    -A^{p-1}\gamma^*+k_\infty\gamma^{*p}\ge 0,
    \]
    which together with $\gamma^*>0$ implies that $\gamma^*\ge\tilde{\gamma}$. Combining this with \eqref{Sdecayrate1-1}, we obtain $\gamma^*>\tilde{\gamma}$. Similarly, it follows that $\gamma_*:=\liminf\limits_{t\to\infty}\tilde{w}(t)<\tilde{\gamma}$.

    Let $\varepsilon>0$ be such that $\gamma^*>\tilde{\gamma}+\varepsilon$, $\gamma_*<\tilde{\gamma}-\varepsilon$. Let $\varepsilon'>0$ and $\tau>0$ be as in \eqref{Sdecayrate0-1}. Then we have
    \begin{equation}\label{Sdecayrate2-1}
      t>\tau,\ |\tilde{w}(t)-\tilde{\gamma}|<\varepsilon'\implies |\tilde{w}'(t)|\ge\varepsilon',
    \end{equation}
    since \eqref{Sdecayrate0-2} immediately contradicts the choice of $\varepsilon$ if $|\tilde{w}(t)-\tilde{\gamma}|<\varepsilon'$ and $|\tilde{w}'(t)|<\varepsilon'$ for some $t>\tau$. Furthermore, if $\tilde{w}'(t)=0$, then
    \[
    \tilde{w}''(t)=A^{p-1}\tilde{w}(t)-k_\infty\tilde{w}(t)^p-(L(t)-k_\infty)\tilde{w}(t)^p{-\mu}\tilde{g}(t).
    \]
    This together with \eqref{Kinfty} and \eqref{finfty2} implies the existence of $\tau'>\tau$, $\delta_0>0$, and $\varepsilon_0>0$ such that
    \begin{gather}
      \tilde{w}(t)\ge\tilde{\gamma}+\varepsilon',\ \tilde{w}'(t)=0\implies \tilde{w}''(t)<-\delta_0,\label{Sdecayrate2-1.1}\\
      \varepsilon_0\le\tilde{w}(t)\le\tilde{\gamma}-\varepsilon',\ \tilde{w}'(t)=0\implies \tilde{w}''(t)>\delta_0,\label{Sdecayrate2-1.2}
    \end{gather}
  for $t>\tau'$. Furthermore, $\varepsilon_0$ can be assumed to be arbitrarily small by replacing $\delta_0>0$ by a sufficiently small number. 
By \eqref{Sdecayrate2-1} and \eqref{Sdecayrate2-1.1} we see that
\begin{equation}\label{Sdecayrate2-1.3}
      \tilde{w}'(t)=0,\ \tilde{w}''(t)\ge 0\implies \tilde{w}(t)\le\tilde{\gamma}-\varepsilon'
\end{equation}
for $t>\tau'$.
By \eqref{Sdecayrate2-1} and \eqref{Sdecayrate2-1.2} we see that
\begin{equation}\label{Sdecayrate2-1.4}
      \tilde{w}(t)\ge\varepsilon_0,\ \tilde{w}'(t)=0,\ \tilde{w}''(t)\le 0\implies \tilde{w}(t)\ge\tilde{\gamma}+\varepsilon'
\end{equation}
for $t>\tau'$. 

    We now consider the quantity
    \[
    E(t):=\frac{1}{2}\tilde{w}'(t)^2-\frac{\tilde{A}^{p-1}}{2}\tilde{w}(t)^2+\frac{k_\infty}{p+1}\tilde{w}(t)^{p+1}.
    \]
    We see that
    \[
    E'(t)=-\tilde{a}\tilde{w}'(t)^2+\tilde{w}'(t)((k_\infty-L(t)){\tilde{w}(t)}^{p-1}-{\mu}\tilde{g}(t)).
    \]
    Let $\tau<\underline{t}<\overline{t}$ be such that $\tilde{w}$ is nondecreasing on $(\underline{t},\overline{t})$ and $\tilde{w}(\underline{t})\le\tilde{\gamma}-{\varepsilon'}$, $\tilde{w}(\overline{t})\ge\tilde{\gamma}+{\varepsilon'}$.
    Let $\underline{t}\le\underline{t}'<\overline{t}'\le\overline{t}$ be such that $\tilde{w}({\underline{t}'})=\tilde{\gamma}-{\varepsilon'}$, $\tilde{w}({\overline{t}'})=\tilde{\gamma}+{\varepsilon'}$. It follows from \eqref{Sdecayrate2-1} that $\tilde{w}'(t)\ge\varepsilon'$ for $t\in[\underline{t}',\overline{t}']$, and thus
    \[
    \int_{\underline{t}}^{\overline{t}}\tilde{w}'(t)^2\,dt\ge\int_{\underline{t}'}^{\overline{t}'}\varepsilon'\tilde{w}'(t)\,dt
=\e'\left(\tilde{w}(\overline{t}')-\tilde{w}(\underline{t}')\right)
=2\varepsilon'^2.
    \]
    This together with \eqref{Kinfty} and \eqref{finfty} implies the existence of $\tau''>\tau'$ such that
    \begin{equation}\label{Sdecayrate2-2}
    E(\overline{t})-E(\underline{t})=\int_{\underline{t}}^{\overline{t}}E'(t)\,dt\le -\tilde{a}\varepsilon'^2
    \end{equation}
    if $\underline{t}>\tau''$ in addition.
    Similarly, we can assume that if $\underline{t}>\tau''$, $\tilde{w}$ is nonincreasing on $(\underline{t},\overline{t})$, $\tilde{w}(\underline{t})\ge\tilde{\gamma}+{\varepsilon'}$, and $\tilde{w}(\overline{t})\le\tilde{\gamma}-{\varepsilon'}$, then \eqref{Sdecayrate2-2} holds.
    
    Let $\varepsilon_0\in(0,\tilde{\gamma}-\varepsilon')$ be such that
    \begin{equation}\label{S6L7E1}
    w\ge 0,\ -\frac{\tilde{A}^{p-1}}{2}w^2+\frac{k_\infty}{p+1}w^{p+1}\le -2\tilde{a}\varepsilon'^2\implies w\ge\varepsilon_0.
    \end{equation}
    Let $\tau_0$ and $\tau_1$ be local maximum points of $\tilde{w}$ such that $\tau_1>\tau_0>\tau''$, $\tilde{w}(\tau_0)>\tilde{\gamma}+\varepsilon$ and $\tilde{w}(\tau_1)>\tilde{\gamma}+\varepsilon$. By \eqref{Sdecayrate2-1.1}, we have $\tilde{w}''(\tau_1)<0$. This implies that
    \[
    \tau_0':=\sup\{t<\tau_1:\tilde{w}'(t)=0\}<\tau_1,\ \tau_1':=\inf\{t>\tau_1:\tilde{w}'(t)=0\}>\tau_1.
    \]
    Furthermore, we have $\tilde{w}''(\tau_0'),\tilde{w}''(\tau_1')\ge 0$. This together with \eqref{Sdecayrate2-1.3} implies that $\tilde{w}(\tau_0'),\tilde{w}(\tau_1')\le\tilde{\gamma}-\varepsilon$.
    Using \eqref{Sdecayrate2-2} twice, we have
    \[
    E(\tau_1')-E(\tau_0')
=\left(E(\tau_1')-E(\tau_1)\right)+\left(E(\tau_1)-E(\tau_0')\right)
    \le -2\tilde{a}\varepsilon'^2.
    \]
    Since
    \[
    E(\tau_0')=-\frac{\tilde{A}^{p-1}}{2}\tilde{w}(\tau_0')^{2}+\frac{k_\infty}{p+1}\tilde{w}(\tau_0')^{p+1}<-\frac{1}{2}\tilde{w}(\tau_0')^2\left(\tilde{A}^{p-1}-k_\infty\tilde{w}(t_0')^{p-1}\right)<0
    \]
    by $0<\tilde{w}(\tau_0')<\tilde{\gamma}$, we obtain $E(\tau_1')<-2\tilde{a}\varepsilon'^2$. This together with \eqref{S6L7E1} implies that $\tilde{w}(\tau_1')\ge\varepsilon_0$.
    It also follows from \eqref{Sdecayrate2-1.2} that $\tilde{w}''(\tau_1')>\delta_0$. In particular, we have
    \[
    \tau_2:=\inf\{t>\tau_1':\tilde{w}'(t)=0\}>\tau_1'.
    \]
    Also, since $\tilde{w}(\tau_2)>\tilde{w}(\tau_1')\ge\varepsilon_0$, it follows from \eqref{Sdecayrate2-1.4} that $\tilde{w}(\tau_2)\ge\tilde{\gamma}+\varepsilon'$.

    Repeating this argument, we obtain a sequence $\tau''<\tau_1<\tau_1'<\tau_2<\tau_2'<\cdots$ such that
    \begin{gather*}
      \tilde{w}(\tau_n)\ge\tilde{\gamma}+\varepsilon',\ \tilde{w}'(\tau_n)=0,\ \tilde{w}''(\tau_n)<-\delta_0,\\
      \varepsilon_0\le\tilde{w}(\tau_n')\le\tilde{\gamma}-\varepsilon',\ \tilde{w}'(\tau_n')=0,\ \tilde{w}''(\tau_n')>\delta_0,\\
      \tilde{w}'<0\ \textrm{on $(\tau_n,\tau_n')$},\ {\tilde{w}'>0}\ \textrm{on $(\tau_n',{\tau_{n+1}})$},
    \end{gather*}
    for all $n\in\Z_{\ge 1}$. By \eqref{Sdecayrate2-2}, we deduce that
    \[
    E(\tau_n')-E(\tau_n)\le-\tilde{a}\varepsilon'^2,\ E(\tau_{n+1})-E(\tau_n')\le-\tilde{a}\varepsilon'^2,
    \]
    for all $n\in\Z_{\ge 1}$. Consequently, it follows that
    \[
    E(\tau_n)\le E(\tau_1)-2(n-1)\tilde{a}\varepsilon'^2.
    \]
    On the other hand, it follows from the definition of $E(t)$ that
    \[
    E(\tau_n)\ge-\frac{\tilde{A}^{p-1}}{2}\tilde{\gamma}^2+\frac{k_\infty}{p+1}\tilde{\gamma}^{p+1},
    \]
    which is a contradiction. Thus \eqref{SDecayrate1} follows.

    We finally prove \eqref{SDecayrate}. Assume on the contrary that $\gamma^*=\gamma_*\neq\tilde{\gamma}$. In the case $\gamma^*>\tilde{\gamma}$, since
    \[
-\tilde{A}^{p-1}\tilde{w}(t)+\tilde{L}(t)\tilde{w}(t)^p+\mu\tilde{g}(t)\to
    -\tilde{A}^{p-1}\gamma^*+k_\infty\gamma^{*p}>0\ \textrm{as $t\to\infty$,}
    \]
    there is $\tau\in\R$ and $\delta>0$ such that
    \[
    \tilde{w}''(t)+\tilde{a}\tilde{w}'(t)\ge\delta\ \textrm{for all $t>\tau$.}
    \]
    By the Gronwall inequality, we obtain
    \[
    \tilde{w}'(t)\ge \frac{\delta}{\tilde{a}}+\left(\tilde{w}'(\tau)-\frac{\delta}{\tilde{a}}\right)e^{-\tilde{a}(t-\tau)}\ \textrm{for all $t>\tau$.}
    \]
    In particular, there is $\tau'>\tau$ such that
    \[
    \tilde{w}'(t)\ge \frac{\delta}{2\tilde{a}}\ \textrm{for all $t>\tau$,}
    \]
    which contradicts the boundedness of $\tilde{w}$. A similar contradiction occurs in the case $0<\gamma^*<\tilde{\gamma}$. Thus we have $\gamma^*=\gamma_*=\tilde{\gamma}$.
The assertion $\lim\limits_{t\to\infty}\tilde{w}'(t)=0$ is proved by a similar way to the proof of \eqref{Sdecayrate1-1}.
    \end{proof}
  
\begin{proof}[Proof of Theorem \ref{S1T3}]
  By Lemma~\ref{SFfinite}, it remains to prove that $\calS$ and $\calB$ are open subsets of $[0,\infty)$.

  We first prove the openness of $\calB$. Let $\mu_0\in\calB$. Then there is $R_0>0$ such that $u^*_{\mu_0}(R_0)<0$. By the continuous dependence, there is $\varepsilon>0$ such that $u^*_{\mu}(R_0)<0$ for all $\mu\in(\mu_0-\varepsilon,\mu_0+\varepsilon)$. Hence $\calB$ is open.
  
We finally prove the openness of $\calS$.
Since $\beta>-2$ and $p>p_S(\beta)$, we see that $(2+\beta)/(p-1)<N-2$, and hence a slow-decay solution always decays slower than a fast-decay solution.
Let $\mu_0\in\calS$ and $\tilde{w}^*_\mu(t):=e^{\tilde{\theta}t}u(e^t)$ for $\mu\ge 0$.
A similar argument as in the proof of \eqref{asy2} derives
  \[
  0<\limsup_{t\to\infty}\tilde{w}^*_{\mu_0}(t)<\infty.
  \]
  By Lemma~\ref{SDasymp}, we obtain
  \[
  \lim_{t\to\infty}\tilde{w}^*_{\mu_0}(t)=\tilde{\gamma},\ \lim_{t\to\infty}\tilde{w}^{*\prime}_{\mu_0}(t)=0.
  \]
  Let $\varepsilon'>0$ and $\tau\in\R$ as in \eqref{Sdecayrate0-1}, \eqref{Sdecayrate0-2}, with $\varepsilon=\tilde{\gamma}/2$. By the continuous dependence, there is $\delta>0$ such that 
  \[
  |\tilde{w}^*_\mu(\tau+1)-\tilde{\gamma}|<\varepsilon',\ |\tilde{w}^{*\prime}_\mu(\tau+1)|<\varepsilon',
  \]
  for all $\mu\in(\mu_0-\delta,\mu_0+\delta)\cap[0,\infty)$.
By Lemma~\ref{S6L7-}, this implies that
  \[
  |\tilde{w}^*_\mu(t)-\tilde{\gamma}|<\frac{\tilde{\gamma}}{2}\ \textrm {for all $t>\tau+1$,}
  \]
  and thus $(\mu_0-\delta,\mu_0+\delta)\cap[0,\infty)\subset\calS$. The proof of Theorem~\ref{S1T3} is complete.
\end{proof}
\bigskip

\noindent{\bf Acknowledgements}
SK was supported by Grant-in-Aid for JSPS Fellows Grant Number 23KJ0645, and FoPM, WINGS Program, the University of Tokyo.
YM was supported by JSPS KAKENHI Grant Number 24K00530.\\

\noindent{\bf Conflict of interest}\\
The authors have no relevant financial or non-financial interests to disclose.\\

\noindent{\bf Data Availability}\\
Data sharing is not applicable to this article as no new data were created or analyzed in this study.


\end{document}